\newtheorem{lem}{Lemma}[section]
\newtheorem{thm}[lem]{Theorem}
\newtheorem{prop}[lem]{Proposition}
\newtheorem{cor}[lem]{Corollary}
\theoremstyle{definition}
\newtheorem{exa}[lem]{Example}
\newtheorem{rem}[lem]{Remark}
\newcommand{\Q}{\Bbb{Q}}
\newcommand{\F}[1]{\Bbb{F}_{#1}}
\newcommand{\Z}{\Bbb{Z}}
\newcommand{\R}{\Bbb{R}}
\newcommand{\K}{\mathcal{K}}
\newcommand{\G}{{\Gamma}}
\newcommand{\spl}[2]{\mathrm{SL}_{#1}(#2)}
\newcommand{\gl}[2]{\mathrm{GL}_{#1}(#2)}
\newcommand{\pb}[1]{\mathcal{P}(#1)}
\newcommand{\qpb}[1]{\widetilde{\mathcal{P}}(#1)}
\newcommand{\redpb}[1]{\overline{\mathcal{P}}(#1)}
\newcommand{\rredpb}[1]{\overline{\mathcal{P}}(#1)}
\newcommand{\rpb}[1]{{\mathcal{RP}}(#1)}
\newcommand{\rpbker}[1]{\mathcal{RP}_1(#1)}
\newcommand{\rrpb}[1]{\overline{\mathcal{RP}}(#1)}
\newcommand{\qrpb}[1]{\widetilde{\mathcal{RP}}(#1)}
\newcommand{\rrpbker}[1]{\overline{\mathcal{RP}}_1(#1)}
\newcommand{\rrrpb}[1]{\widehat{\rpb{#1}}}
\newcommand{\qrpbker}[1]{\widetilde{\mathcal{RP}}_1(#1)}
\newcommand{\rsc}[1]{\mathcal{RP}'(#1)}
\newcommand{\grsc}[1]{[ #1 ]'}
\newcommand{\bl}[1]{\mathcal{B}(#1)}
\newcommand{\qbl}[1]{\widetilde{\mathcal{B}}(#1)}
\newcommand{\rbl}[1]{{\mathcal{RB}}(#1)}
\newcommand{\rblker}[1]{\mathcal{RB}_0(#1)}%
\newcommand{\rrbl}[1]{\overline{\mathcal{RB}}(#1)}
\newcommand{\qrbl}[1]{\widetilde{\mathcal{RB}}(#1)}
\newcommand{\gpb}[1]{\left[ #1\right]} 
\newcommand{\sgpb}[1]{[ #1]} 
\newcommand{\sus}[1]{\left\{ #1\right\} }
\newcommand{\suss}[2]{\psi_{#1}\left( #2\right)}
\newcommand{\indf}[3]{\ind{#2}{#1}{#3}}
\newcommand{\ks}[2]{{\K}^{\mbox{\tiny $(#1)$}}_{#2}}
\newcommand{\kv}[1]{\mathcal{L}_{#1}}
\newcommand{\pbconst}[1]{\tilde{C}_{#1}}
\newcommand{\bconst}[1]{C_{#1}}
\newcommand{\cconst}[1]{D_{#1}}
\newcommand{\cconstmod}[1]{\mathcal{D}_{#1}}
\newcommand{\icconstmod}[1]{\mathcal{D}^0_{#1}}
\newcommand{\bw}{\Lambda}
\newcommand{\rbw}{\widetilde{\bw}}
\newcommand{\spec}[1]{S_{#1}}
\newcommand{\sq}[1]{{#1}^\times/({#1}^\times)^2}
\newcommand{\tnorm}[1]{\mathcal{N}_{#1}}
\newcommand{\rsq}[1]{{\mathcal{G}}_{#1}}
\newcommand{\chara}[1]{\mathrm{char}(#1)}
\newcommand{\laur}[2]{{#1}[{#2},{#2}^{-1}]}
\renewcommand{\dim}[2]{\mathrm{dim}_{#1}\left(#2\right)}
\newcommand{\card}[1]{\left| #1 \right|}
\renewcommand{\forall}{\mbox{ for all }}
\newcommand{\dual}[1]{\widehat{#1}}
\newcommand{\id}[1]{\mathrm{Id}_{#1}}
\renewcommand{\ker}[1]{\mathrm{Ker}(#1)}
\newcommand{\coker}[1]{\mathrm{Coker}(#1)}
\newcommand{\hmz}[2]{\mathrm{Hom}(#1,#2)}
\newcommand{\unitr}[1]{{#1}^\times}
\newcommand{\sgr}[1]{\mathrm{R}_{#1}}
\newcommand{\rsgr}[1]{\widehat{\sgr{#1}}} 
\newcommand{\an}[1]{\left\langle{#1}\right\rangle}
\newcommand{\pf}[1]{\left\langle\!\left\langle{#1}\right\rangle\!\right\rangle}
\newcommand{\aug}[1]{\mathcal{I}_{#1}}
\newcommand{\laurs}[2]{{#1}\left(\!\left( #2 \right)\!\right)}
\newcommand{\igr}[1]{\Z[#1]}
\newcommand{\pp}[1]{\mathrm{p}_{#1}^+}
\newcommand{\ep}[1]{\mathrm{e}_{#1}^+} 
\newcommand{\epm}[1]{\mathrm{e}_{#1}^-} 
\newcommand{\idem}[1]{\mathrm{e}_{#1}}
\newcommand{\idm}[2]{\mathrm{e}^{#1}_{#2}}
\newcommand{\upp}[2]{{#1}^{(#2)}}
\newcommand{\dwn}[2]{{#1}_{(#2)}}
\newcommand{\pieps}[3]{{#1}[#2,#3]}
\newcommand{\asymm}{\circ}
\newcommand{\asym}[3]{\mathrm{S}^{#1}_{#2}(#3)}
\newcommand{\qasym}[3]{\tilde{\mathrm{S}}^{#1}_{#2}(#3)}
\newcommand{\symb}[1]{\Sigma\left( #1\right)}
\newcommand{\zhalf}[1]{{#1}\left[ \tfrac{1}{2}\right]}
\newcommand{\znth}[2]{{#1}\left[ \tfrac{1}{#2}\right]}
\newcommand{\res}[1]{\arrowvert_{#1}}
\newcommand{\mwk}[2]{K^{\mathrm{\small MW}}_{#1}({#2})}
\newcommand{\kind}[1]{K^{\mathrm{\small ind}}_3(#1)}
\newcommand{\ho}[3]{\mathrm{H}_{#1}(#2,#3 )}
\newcommand{\hot}[2]{\mathrm{H}_3\left( \spl{2}{#1},#2\right)_0}
\newcommand{\hoz}[2]{\ho{#1}{#2}{\Z}}
\newcommand{\ind}[2]{\mathrm{Ind}^{#1}_{#2}}
\newcommand{\Tor}[2]{\mathrm{Tor}^{\Z}_{1}(#1,#2)}
\title{
The third homology of $\mathrm{SL}_2$ of  fields with discrete valuation
}
\author{Kevin Hutchinson}
\address{School of Mathematics and Statistics,
 University College Dublin}
\email{kevin.hutchinson@ucd.ie}
\date{\today}
\keywords{
special linear group, homology group, scissors congruence group, $K$-theory
}
\subjclass{19D99, 20J05, 20G10}
\begin{document}
\bibliographystyle{plain}
\maketitle

\begin{abstract}
For a field $F$ with discrete valuation and residue field $k$ 
we relate the third homology of $\spl{2}{F}$ 
with half-integral coefficients to the third homology of $\spl{2}{k}$ and a certain refined 
scissors congruence group of $k$. As an application, we obtain explicit calculations of the 
third homology of $\mathrm{SL}_2$ of certain higher-dimensional local fields in terms of 
scissors congruence groups, in the sense of Dupont and Sah, of the residue fields of the 
associated valuations.  
\end{abstract}

\section{Introduction}\label{sec:intro}
\subsection{Overview}
Let $F$ be a field. There is a natural surjective map from $\hoz{3}{\spl{2}{F}}$ to the 
indecomposable quotient, $\kind{F}$, of the third Quillen $K$-group of $F$. 
This article concerns the properties and calculation of the kernel of this homomorphism, 
which we denote $\hot{F}{\Z}$.

Our calculations below and elsewhere 
show that in general $\hot{F}{\Z}$ may be  large, at least
 when $F^\times \not= (F^\times)^2$.  For example, for $F=\Q$ 
the torsion group 
$\hot{\Q}{\Z}$ maps onto an infinite direct sum of non-trivial finite cyclic groups 
(\cite{hut:rbl11}). For $n\geq 3$ by way of contrast, 
$\ho{3}{\spl{n}{\Q}}{\Z}\cong \kind{\Q}\cong \Z/24$.

However, our purpose is to show that 
the functor $\hot{F}{\Z}$ has good properties which make it amenable to calculation. 
Because of systematic complications involving $2$-torsion, however, 
we will state our main results for $\zhalf{\Z}$-modules in this article. As a typical example, 
for any prime $p$ we show  that 
\[
\ho{3}{\spl{2}{\laurs{\Q_p}{x}}}{\zhalf{\Z}}\cong \zhalf{\kind{\laurs{\Q_p}{x}}}\oplus 
\zhalf{\pb{\Q_p}}\oplus \zhalf{\pb{\F{p}}}^{\oplus 2}
\]
where $\pb{F}$ is the scissors congruence group of the field $F$ (see Example \ref{exa:qpxx}). 

Our approach exploits the close relationship of $\hot{F}{\Z}$ to certain 
refined or generalised \emph{scissors congruence groups}, which are given by explicit 
presentations. 

The scissors congruence group, $\pb{F}$, 
of a field $F$ was introduced by Dupont and Sah in their study of Hilbert's third problem in 
hyperbolic $3$-space (Dupont and Sah, \cite{sah:dupont}). It is an abelian group given by 
an explicit presentation (see section \ref{sec:bloch} below for details) and is closely connected 
with $\kind{F}$ (\cite{sus:bloch}). The \emph{refined scissors congruence group} (see 
\cite{hut:rbl11} and \cite{hut:laurent}), $\rpbker{F}$ 
is connected in an analogous way to the group $\ho{3}{\spl{2}{F}}{\zhalf{\Z}}$. 

There is 
a natural homomorphism $\rpbker{F}\to \pb{F}$ which is an isomorphism when the field $F$ is 
quadratically closed. For a general field $F$ there is a natural short exact sequence
\[
0\to \hot{F}{\zhalf{\Z}}\to \zhalf{\rpbker{F}}\to \zhalf{\pb{F}}\to 0. 
\]

The group $\rpbker{F}$ itself occurs naturally in the calculation of the third homology 
of $\mathrm{SL}_2$ of rings: In \cite{hut:laurent} we showed that for any 
infinite field $F$ there is a natural split 
short exact sequence 
\[
0\to \ho{3}{\spl{2}{F}}{\zhalf{\Z}}\to \ho{3}{\spl{2}{\laur{F}{t}}}{\zhalf{\Z}}\to \zhalf{\rpbker{F}}\to 
0.
\]
  
Let $F$ be a field with discrete valuation and corresponding residue field $k$. There exists a 
surjective homomorphism, depending on the choice of a uniformizer $\pi$, 
\[
\hot{F}{\zhalf{\Z}}\to \hot{k}{\zhalf{\Z}}\oplus \zhalf{\rpbker{k}}.
\]
The main theorem of this article (see Theorem \ref{thm:mainsv} and Proposition 
\ref{prop:mainsv3}) describes conditions under which this map is an isomorphism and,
 more generally, when it has a kernel annihilated by $3$. Thus, for example, if the valuation 
is complete and $\chara{k}\not=2$, the kernel of this homomorphism will be annihilated by $3$.
More precise statements and examples can be found in sections \ref{sec:main} and \ref{sec:3tors} 
below. 

As an application of these results we calculate the third homology of $\mathrm{SL}_2$ of 
certain higher-dimensional local fields in terms of the indecomposable $K_3$ of the field 
and scissors congruence groups, in the original sense of Dupont and Sah, of the intermediate 
residue fields. These calculations, in particular, vastly generalize the calculation of 
$\ho{3}{\spl{2}{F}}{\zhalf{\Z}}$ for classical local fields found in \cite{hut:rbl11}. Unlike 
the results in \cite{hut:rbl11} our calculations here include the case of 
 $2$-adic local fields in 
characteristic zero.

A fundamental feature of the homology of $\mathrm{SL}_2$ of a field is that it is naturally 
acted on by the group of square classes, $\sq{F}$, of the multiplicative group is a field. 
Thus it is a module over the group ring $\sgr{F}:= \Z[\sq{F}]$. This module structure is integral 
to all the results and calculations below. Indeed, $\hot{F}{\zhalf{\Z}}=
\aug{F}\ho{3}{\spl{2}{F}}{\zhalf{\Z}}$, where $\aug{F}$ is the augmentation ideal 
in $\sgr{F}$ and thus all of our results concern the non-triviality of this group action. 
This non-triviality is used in \cite{hut:wendt} to demonstrate 
the failure of the property of \emph{weak homotopy invariance} 
for the third homology of $\mathrm{SL}_2$ of fields with non-trivial valuations. 

\begin{rem}
Let $F$ be a field with discrete valuation $v$ and residue field $k$. Let 
$\mathcal{O}_v = \{ x\in F\ |\ v(x)\geq 0\}$ 
be the associated valuation ring. Our main result below implies the existence, under certain 
conditions, of an exact sequence 
\[
0\to \hot{k}{\zhalf{\Z}}\to \hot{F}{\zhalf{\Z}}\to \zhalf{\rpbker{k}}\to 0.
\]

In a subsequent paper we will use this to show that under appropriate conditions there 
is a natural  short exact sequence
\[
0\to \ho{3}{\spl{2}{\mathcal{O}_v}}{\zhalf{\Z}} \to
\ho{3}{\spl{2}{F}}{\zhalf{\Z}}\to \zhalf{\rpbker{k}}\to 0. 
\]
(See \cite{hut:arxivhlr} for some cases of this result.) The main point is that the natural 
map $\mathcal{O}_v\to k$ induces an isomorphism 
$\hot{\mathcal{O}_v}{\zhalf{\Z}}\cong \hot{k}{\zhalf{\Z}}$ under appropriate circumstances. 

We observe that analogous results are known for the \emph{second} homology of 
$\mathrm{SL}_2$ in which the first Milnor-Witt $K$-group, $\mwk{1}{k}$, 
 of the residue field replaces
the refined scissors congruence group: 

Let $\mathcal{O}$ be a local domain with field of fractions $F$ and infinite residue field $k$. 
Then $\ho{2}{\spl{2}{F}}{\Z}\cong \mwk{2}{F}$ and 
$\ho{2}{\spl{2}{\mathcal{O}}}{\Z}\cong \mwk{2}{\mathcal{O}}$ (by \cite{sus:tors} for the case 
of an infinite field, and 
\cite[Theorem 5.37]{schlicht:arxiveuler} for local rings with infinite residue fields). 

On the other hand, the main theorem of \cite{gsz:mw} implies a short exact sequence 
\[
0\to \mwk{2}{\mathcal{O}}\to \mwk{2}{F}\to \mwk{1}{k}\to 0
\]
when $\mathcal{O}$ is regular and $1/2\in \mathcal{O}$. It follows that,
 under these conditions, there is a natural short exact sequence
\[
0\to \ho{2}{\spl{2}{\mathcal{O}}}{\Z}\to 
\ho{2}{\spl{2}{F}}{\Z} \to \mwk{1}{k}\to 0.
\]
\end{rem}

\begin{rem} 
For an infinite field $F$, $\ho{3}{\spl{3}{F}}{\zhalf{\Z}}\cong \zhalf{\kind{F}}$ 
and thus $\hot{F}{\zhalf{\Z}}$ can also be understood as the kernel of the
 stabilization homomorphism
\[
\ho{3}{\spl{2}{F}}{\zhalf{\Z}}\to \ho{3}{\spl{3}{F}}{\zhalf{\Z}} 
\]
(see Lemma \ref{lem:h3sl20}).

\end{rem}

\subsection{Layout of the article} In section \ref{sec:bloch} we review the definitions and 
relevant 
properties of (refined) scissors congruence groups and their relation to the third homology 
of $\mathrm{SL}_2$ of a field. 

In section \ref{sec:char} we prove an elementary character-theoretic local-global principle 
for modules over the group ring of an arbitrary 
elementary abelian $2$-group, such as the group of square classes of a field. This principle in 
turn is used to prove the main results of sections \ref{sec:rsc} and \ref{sec:main}.

Section \ref{sec:rsc} contains some further results on the refined scissors congruence group, 
$\rpbker{F}$. By definition this group is the kernel of a module homomorphism 
$\lambda_1:\rpb{F}\to \aug{F}^2$, where $\rpb{F}$ is an $\sgr{F}$-module described by generators 
and relations analogous to the description of the scissors congruence group $\pb{F}$. The main
result of this section is that $\zhalf{\rpbker{F}}$ is precisely the $+1$-eigenspace for the 
action of the square class of $-1\in F^\times$ on $\zhalf{\rpb{F}}$. It follows that 
$\zhalf{\rpbker{F}}$ is naturally a quotient of $\zhalf{\rpb{F}}$ and thus also 
admits a simple presentation as a $\sgr{F}$-module. 

In section \ref{sec:val} we review, from \cite{hut:rbl11}, 
 the specialization or residue homomorphism $\spec{v}$ on 
$\rpbker{F}$ associated to a general (Krull) valuation $v$ on a field $F$. We give a 
characterization of the kernel of the map $\spec{v}$ for general valuations and we show in 
the case of a discrete valuation with residue field $k$ it gives rise to a surjective 
homomorphism 
\begin{eqnarray}\label{eqn:intro}
\hot{F}{\zhalf{\Z}}\to \hot{k}{\zhalf{\Z}}\oplus \zhalf{\rpbker{k}}.
\end{eqnarray}     

In section \ref{sec:main} we prove the main theorem (Theorem \ref{thm:main}) of this 
article: Let $F$ be a field with discrete valuation $v$ and residue field $k$. Suppose that 
$F$ satisfies the additional condition that there exists $n\geq 1$ such that 
$U_n\subset (F^\times)^2$, where $U_n:=\{ x\in F^\times |\  v(x)=0,\  v(1-x)\geq n\}$. 
Then the residue homomorphism induces an isomorphism 
\[
\aug{F}\zhalf{\rrpbker{F}}\cong \aug{k}\zhalf{\rrpbker{k}}\oplus \zhalf{\rrpbker{k}} 
\]
where $\rrpbker{F}$ is a quotient of $\rpbker{F}$ by a certain submodule 
$\cconstmod{F}$ annihilated by $3$. 
This implies the the surjective homomorphism (\ref{eqn:intro}) has kernel annihilated by $3$.

The final section, section \ref{sec:3tors}, analyses further the $3$-torson module $\cconstmod{F}$ 
and determines general condition sunder which the kernel of the homomorphism 
(\ref{eqn:intro}) is trivial. We conclude with applications to the calculations of 
$\ho{3}{\spl{2}{F}}{\zhalf{\Z}}$ for certain classical and higher-dimensional local fields, 
such as iterated power series field $\laurs{\laurs{\Q_p}{x_{n-1}}\cdots}{x_1}$. 

In an appendix to the paper we revisit the specialization map $\spec{v}$ introduced in 
\cite{hut:rbl11} and show that it can be defined with an improved target. We use this 
improved version of the specialization homomorphism in section \ref{sec:3tors}.

\subsection{Some notation}
For a commutative unital ring $A$, $\unitr{A}$ denotes the group of units of $A$. 

For any abelian group $A$, we denote $A\otimes \znth{\Z}{n}$ by $\znth{A}{n}$. 

If $q$ is a prime power, $\F{q}$ will denote the finite field with $q$ elements.

For a group $\G$ and a $\Z[G]$-module $M$, $M_G$ will denote the module of coinvariants; 
$M_G=\ho{0}{G}{M}=M/\aug{G}{M}$, where $\aug{G}$ is the augmentation ideal of $\Z[G]$.
 
Given an abelian group $G$ we let  $\asym{2}{\Z}{G}$ denote the group 
\[
\frac{G\otimes_{\Z}G}{<x\otimes y + y\otimes x | x,y \in G>}
\]
and, for $x,y\in G$, we denote by $x\asymm y$ the image of $x\otimes y$ in $\asym{2}{\Z}{G}$.

For a field $F$, we let $\sgr{F}$ denote the group ring $\Z[\sq{F}]$ of the group 
of square classes of $F$ and we let $\aug{F}$ denote 
the augmentation ideal of $\sgr{F}$. If $x\in F^\times$, we denote the corresponding square-class, 
considered as an element of $\sgr{F}$, by $\an{x}$. The generators $\an{x}-1$ of $\aug{F}$ will be 
denoted $\pf{x}$. The particular element $\pp{-1}:=1+\an{-1}\in\sgr{F}$ will play a role below.

\section{Review: Scissors congruence Groups and Bloch Groups}\label{sec:bloch}
In this section we review the definitions and 
properties of (refined) scissors congruence groups and their relation to the third homology 
of $\mathrm{SL}_2$ of a field. 
\subsection{The Bloch group of a  field}
For a field $F$, with at least $4$ elements, the \emph{scissors congruence group} 
(also called the  \emph{pre-Bloch group}), $\pb{F}$, is the group 
generated 
by the elements $\gpb{x}$, $x\in F^\times\setminus\{ 1\}$,  subject to the relations 
\[
R_{x,y}:\quad  \gpb{x}-\gpb{y}+\gpb{\frac{y}{x}}-\gpb{\frac{1-x^{-1}}{1-y^{-1}}}+\gpb{\frac{1-x}{1-y}} \quad x\not=y.
\]

The map 
\[
\lambda:\pb{F}\to \asym{2}{\Z}{F^\times},\quad  [x]\mapsto \left(1-{x}\right)\asymm {x}
\]
is well-defined, and the \emph{Bloch group of $F$}, $\bl{F}\subset \pb{F}$, is 
defined to be the kernel of $\lambda$. 

The Bloch group is closely related to the indecomposable $K_3$ of the field $F$: 
For any field $F$ there is a natural exact sequence 
\[
0\to \widetilde{\Tor{\mu_F}{\mu_F}}\to \kind{F}\to \bl{F} \to 0
\]
where $ \widetilde{\Tor{\mu_F}{\mu_F}}$ is the unique nontrivial extension of $\Tor{\mu_F}{\mu_F}$ 
by $\Z/2$. (See 
Suslin \cite{sus:bloch} for infinite fields and \cite{hut:cplx13} for finite fields.)

\subsection{The refined Bloch group and $\ho{3}{\spl{2}{F}}{\Z}$}\label{subsec:rbl}
For any field $F$ there is a natural surjective homomorphism
\begin{eqnarray}\label{eqn:kind}
\hoz{3}{\spl{2}{F}}\to \kind{F}.
\end{eqnarray}

When $F$ is quadratically closed (i.e. when $\sq{F}=1$) this map is an isomorphism. 
However, in general, the group extension 
\[
1\to \spl{2}{F}\to\gl{2}{F}\to F^\times\to 1
\]
induces an action -- by conjugation -- of $F^\times$ on $\hoz{\bullet}{\spl{2}{F}}$ which factors 
through $\sq{F}$. It 
can be shown that the map (\ref{eqn:kind}) induces an isomorphism
\[
\ho{3}{\spl{2}{F}}{\zhalf{\Z}}_{\sq{F}}\cong\zhalf{\kind{F}}
\]
(see \cite{mirzaii:third}), but -- as our calculations in \cite{hut:rbl11} and below show -- the action 
of $\sq{F}$ on 
$\hoz{3}{\spl{2}{F}}$ is in general non-trivial.

Thus
$\hoz{3}{\spl{2}{F}}$ is naturally an $\sgr{F}$-module, and for general fields, 
in order to give a Bloch-type description
of it, we must incorporate the $\sgr{F}$-module structure at each stage of the process.

For a field $F$ with at least $4$ elements, $\rpb{F}$ is defined to be the 
  $\sgr{F}$-module with generators $\gpb{x}$, $x\in F^\times$ 
subject to the relations $\gpb{1}=0$ and 
\[
S_{x,y}:\quad 0=\gpb{x}-\gpb{y}+\an{x}\gpb{ \frac{y}{x}}-\an{x^{-1}-1}\gpb{\frac{1-x^{-1}}{1-y^{-1}}}
+\an{1-x}\gpb{\frac{1-x}{1-y}},\quad x,y\not= 1
\]

Of course, from the definition it follows immediately that 
$\pb{F}=(\rpb{F})_{\sq{F}}=\ho{0}{\sq{F}}{\rpb{F}}$.

Let  $\bw= (\lambda_1,\lambda_2)$ 
be the $\sgr{F}$-module homomorphism 
\[
\rpb{F}\to \aug{F}^2\oplus\asym{2}{\Z}{F^\times}
\]
where 
$\lambda_1:\rpb{F}\to \aug{F}^2$ is the map $\gpb{x}\mapsto \pf{1-x}\pf{x}$, and $\lambda_2$ is the composite
\[
\xymatrix{
\rpb{F}\ar@{>>}[r]
&\pb{F}\ar[r]^-{\lambda}
&\asym{2}{\Z}{F^\times}.
}
\] 

It can be shown that $\bw$ is well-defined. 

The  
\emph{refined Bloch group} of the field $F$ (with at least $4$ elements) to be the $\sgr{F}$-module 
\[
\rbl{F}:=\ker{\bw: \rpb{F}\to \aug{F}^2\oplus\asym{2}{\Z}{F^\times}}.
\]

For the fields with $2$ and $3$ elements  the following \textit{ad hoc} definitions allow us to include 
these fields in the statements of some of our results. 

$\pb{\F{2}}=\rpb{\F{2}}=\rbl{\F{2}}=\bl{\F{2}}$ 
is simply an additive group of order $3$ with distinguished generator, denoted $\bconst{\F{2}}$.

$\rpb{\F{3}}$ is the cyclic $\sgr{\F{3}}$-module generated by the symbol $\gpb{-1}$ and subject to the one relation
\[
0=2\cdot(\gpb{-1}+\an{-1}\gpb{-1}).
\] 
The homomorphism
\[
\bw:\rpb{\F{3}}\to\aug{\F{3}}^2=2\cdot \Z\pf{-1}
\]
is the $\sgr{\F{3}}$-homomorphism sending $\gpb{-1}$ to $\pf{-1}^2=-2\pf{-1}$. 

Then $\rbl{\F{3}}=\ker{\bw}$ is the submodule of order $2$ generated by $\gpb{-1}+\an{-1}\gpb{-1}$.

Furthermore, we let $\pb{\F{3}}=\rpb{\F{3}}_{\F{3}^\times}$. 
This is a cyclic $\Z$-module of order $4$ with generator 
$\gpb{-1}$. Let $\lambda:\pb{\F{3}}\to \asym{2}{\Z}{\F{3}^\times}$ 
be the map $\gpb{-1}\mapsto -1\asymm -1$. Then 
$\bl{\F{3}}:=\ker{\lambda}=\rbl{\F{3}}$.  


We recall some results from \cite{hut:cplx13}: The main result there is 

\begin{thm}\label{thm:main} Let $F$ be any field.

There is a natural complex 
\[
0\to \Tor{\mu_F}{\mu_F}\to\ho{3}{\spl{2}{F}}{{\Z}}\to{\rbl{F}}\to 0.
\]
which is exact everywhere except possibly at the middle term. The middle homology is annihilated by $4$.

In particular, for any field there is a natural short exact sequence
\[
0\to\zhalf{\Tor{\mu_F}{\mu_F}}\to\ho{3}{\spl{2}{F}}{\zhalf{\Z}}\to\zhalf{\rbl{F}}\to 0.
\]
\end{thm}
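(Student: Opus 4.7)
The plan is to extract the stated complex from the low-degree analysis of an $\spl{2}{F}$-equivariant spectral sequence built from the action of $\spl{2}{F}$ on the projective line. Let $C_\bullet(\projl{F})$ denote the simplicial complex whose group of $n$-chains is the free abelian group on ordered $(n+1)$-tuples of distinct points of $\projl{F}$, with its canonical $\spl{2}{F}$-action by M\"obius transformations. Since this complex has trivial reduced homology in the relevant range, the resulting hyperhomology spectral sequence takes the form
\[
E^1_{p,q} = \bigoplus_{\text{orbits}} \ho{q}{\mathrm{Stab}}{\Z} \Longrightarrow \ho{p+q}{\spl{2}{F}}{\Z},
\]
where the sum runs over a set of orbit representatives of $(p+1)$-tuples. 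The $\sgr{F}$-module structure enters naturally because the quotient $\gl{2}{F}/\spl{2}{F}\cong F^\times$ conjugates each stabilizer and permutes orbits.

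First I would identify the orbits and their stabilizers: for tuples of length one, two, and three the stabilizers are a Borel subgroup, the split torus $T\cong F^\times$ (up to a finite sign group), and the trivial group respectively, while orbits of $4$-tuples are parametrized by the cross-ratio in $F^\times\setminus\{1\}$. By Shapiro's lemma the row $q=0$ of the $E^1$ page is made of permutation modules, and the $d_1$-differential landing in position $(3,0)$ has cokernel generated by symbols $\gpb{x}$. The $5$-term relation $S_{x,y}$ of $\rpb{F}$ emerges as the image under $d_1$ of a generic $5$-tuple with appropriate entries, which identifies the relevant $E^2$ entry with $\rpb{F}$ and yields the natural edge map $\ho{3}{\spl{2}{F}}{\Z}\to\rpb{F}$; surjectivity onto $\rbl{F}$ is built into the spectral sequence convergence.

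Second, I would verify that this edge map lands in the submodule $\rbl{F} = \ker{\bw}$. Vanishing of the composite with $\lambda_2\colon \rpb{F}\to \asym{2}{\Z}{F^\times}$ is essentially Suslin's calculation of $K_2$ by the configuration complex (a Matsumoto--Moore style argument), while vanishing of the composite with $\lambda_1\colon \rpb{F}\to \aug{F}^2$ follows from a direct computation using the unipotent structure of the Borel stabilizer together with the $\sgr{F}$-action. The kernel term $\Tor{\mu_F}{\mu_F}$ then appears via the K\"unneth formula applied to the torus stabilizer: one has $\ho{3}{T}{\Z}\supset \mathrm{Tor}^{\Z}_1(F^\times,F^\times)=\mathrm{Tor}^{\Z}_1(\mu_F,\mu_F)$, and this contribution flows into $\ho{3}{\spl{2}{F}}{\Z}$ through the $(p,q)=(2,1)$ edge. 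The containment of the image of the left map inside the kernel of the right map is automatic from the spectral sequence filtration.

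The main obstacle is the two-primary bookkeeping that limits how clean the middle exactness can be made. Several differentials that one would like to vanish identically only vanish up to symbols of order dividing $2$, coming from the sign $\{-1,\cdot\}$, from the discrepancy between the antisymmetric square $\asym{2}{\Z}{F^\times}$ and the exterior square $\wedge^{2}F^\times$ at the prime $2$, and from secondary operations ($d_2$ differentials and extension problems between adjacent columns) whose cumulative effect is at worst annihilated by $4$. This is precisely why the middle homology is only controlled up to $4$-torsion, and why inverting $2$ eliminates it entirely to give a genuine short exact sequence. The technical heart of the proof lies in a careful analysis of these $d_2$ differentials, together with the extension relating $\asym{2}{\Z}{F^\times}$ and $\wedge^{2}F^\times$; once the latter is pinned down, the stated complex and its exactness properties fall out.
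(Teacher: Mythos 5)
Your outline is, in architecture, the proof that actually exists in the literature: note first that the paper itself does not prove this theorem but recalls it from \cite{hut:arxivcplx11}, and the argument there is exactly the one you describe --- the action of $\spl{2}{F}$ on configurations of distinct points of $\projl{F}$, the stabilizer spectral sequence, the identification of the $(3,0)$-term with a refined pre-Bloch group via cross-ratios and the five-term relation read off from $5$-tuples, the $\Tor{\mu_F}{\mu_F}$ contribution coming from the torus, and a $2$-primary analysis of the remaining differentials.

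However, several specific claims in your sketch are wrong in ways that touch precisely the bookkeeping you defer. The stabilizer in $\spl{2}{F}$ of a triple of distinct points is not trivial but the center $\{\pm I\}\cong \Z/2$ (it fixes every configuration), and its homology fills the whole column $p=2$; it is one of the principal sources of the $2$- and $4$-torsion the theorem quantifies, so it cannot be dropped. Likewise, $\spl{2}{F}$-orbits of $4$-tuples are not classified by the cross-ratio alone --- that is the $\pgl{2}{F}$ statement; within a fixed cross-ratio the orbits form a torsor under $\sq{F}$, which is exactly why the cokernel of $d^1$ at $(3,0)$ is the refined module $\rpb{F}$ over $\sgr{F}$ rather than $\pb{F}$, and your parenthetical claim would collapse the refined structure the theorem is about. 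The Tor term does not flow in at $(2,1)$ (that slot is $H_1(\{\pm I\})=\Z/2$); it enters in bidegree $(0,3)$ via $\ho{3}{B}{\Z}\cong\ho{3}{F^\times}{\Z}\supset \Tor{\mu_F}{\mu_F}$, where the identification of the homology of the Borel subgroup with that of the diagonal torus (and the acyclicity of the configuration complex) is where the hypothesis that $F$ is infinite is used. Finally, ``surjectivity onto $\rbl{F}$ is built into convergence'' is not accurate as stated: convergence gives surjectivity of $\ho{3}{\spl{2}{F}}{\Z}$ onto $E^\infty_{3,0}$, and the substance of the proof is to identify $E^\infty_{3,0}$, i.e.\ the joint kernel of $d^2\colon E_{3,0}\to E_{1,1}$ and $d^3\colon E_{3,0}\to E_{0,2}$, with $\ker{\lambda_1}\cap\ker{\lambda_2}=\rbl{F}$ up to torsion annihilated by $4$; your proposal names this step but does not carry it out. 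So: right strategy, matching the cited proof, but with orbit/stabilizer errors to repair and the decisive differential computation --- the part that produces both the containment in $\rbl{F}$ and the bound ``annihilated by $4$'' --- left open.
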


The following result is Corollary 5.1 in \cite{hut:cplx13}:

\begin{lem}
\label{lem:blochinf}
Let $F$ be a field. Then the natural map $\rbl{F}\to\bl{F}$ is surjective and 
the induced map 
$\rbl{F}_{F^\times}\to \bl{F}$ has a $2$-primary torsion kernel.
\end{lem}

Now for any field $F$, let 
\begin{eqnarray*}
\hot{F}{\zhalf{\Z}}:=\ker{\ho{3}{\spl{2}{F}}{\Z}\to\kind{F}}
\end{eqnarray*}
and
\begin{eqnarray*}
\rblker{F}:=\ker{\rbl{F}\to\bl{F}}
\end{eqnarray*}

The following is Lemma 5.2 in \cite{hut:cplx13}. \begin{lem} \label{lem:h3sl20}
Let $F$ be a field. Then
\begin{enumerate}
\item $\hot{F}{\zhalf{\Z}}\cong\zhalf{\rblker{F}}$ as $\zhalf{\sgr{F}}$-modules.
\item $\hot{F}{\zhalf{\Z}}=\aug{F}\ho{3}{\spl{2}{F}}{\zhalf{\Z}}$ and
$\zhalf{\rblker{F}}=\aug{F}\zhalf{\rbl{F}}$.
\item 
\begin{eqnarray*}
 \hot{F}{\zhalf{\Z}}&=& \ker{\ho{3}{\spl{2}{F}}{\zhalf{\Z}}\to \ho{3}{\spl{3}{F}}{\zhalf{\Z}}}\\
&=& \ker{\ho{3}{\spl{2}{F}}{\zhalf{\Z}}\to \ho{3}{\gl{2}{F}}{\zhalf{\Z}}}
\end{eqnarray*}
\end{enumerate}
\end{lem}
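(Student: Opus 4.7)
The plan is to deduce (1)--(3) from Theorem \ref{thm:main} together with two further inputs already recorded in Section \ref{subsec:rbl}: the Suslin short exact sequence $0\to\widetilde{\Tor{\mu_F}{\mu_F}}\to\kind{F}\to\bl{F}\to 0$ (which, after inverting $2$, reads $0\to\zhalf{\Tor{\mu_F}{\mu_F}}\to\zhalf{\kind{F}}\to\zhalf{\bl{F}}\to 0$), and Mirzaii's isomorphism $\ho{3}{\spl{2}{F}}{\zhalf{\Z}}_{\sq{F}}\cong\zhalf{\kind{F}}$. With these in hand, all three items reduce to diagram chases.

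For (1), I would stack the Theorem \ref{thm:main} sequence over the $2$-inverted Suslin sequence, using the identity on the $\Tor$ terms, the natural projection $\ho{3}{\spl{2}{F}}{\zhalf{\Z}}\twoheadrightarrow\zhalf{\kind{F}}$ in the middle, and the induced map $\zhalf{\rbl{F}}\to\zhalf{\bl{F}}$ on the right. Commutativity of the squares is a naturality statement about the two constructions. Since the left vertical is an isomorphism, the snake lemma identifies $\ho{3}{\spl{2}{F}}{\zhalf{\Z}}_0$ with $\zhalf{\rblker{F}}$.

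For (2), the first equality follows at once from Mirzaii: the projection to $\zhalf{\kind{F}}$ kills $\aug{F}\ho{3}{\spl{2}{F}}{\zhalf{\Z}}$ because $\sq{F}$ acts trivially on $\kind{F}$, and the induced map on coinvariants is an isomorphism. For the $\rbl{F}$ version I would take $\sq{F}$-coinvariants of the Theorem \ref{thm:main} sequence, which is right exact, and compare with the Suslin sequence. The left term already has trivial $\sq{F}$-action and the middle coinvariants are $\zhalf{\kind{F}}$ by Mirzaii, so a short chase yields $\zhalf{\rbl{F}}_{\sq{F}}\cong\zhalf{\bl{F}}$; equivalently, the kernel of $\zhalf{\rbl{F}}\to\zhalf{\bl{F}}$ equals $\aug{F}\zhalf{\rbl{F}}$, which together with (1) gives the claim.

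For (3), the image of $\hoz{3}{\spl{2}{F}}\to\hoz{3}{\spl{3}{F}}\cong\kind{F}$ coincides with $\kind{F}$ up to $2$-torsion (Suslin and Hutchinson, as recalled in the Introduction), so after inverting $2$ the stabilization map becomes the canonical surjection onto $\zhalf{\kind{F}}$, whose kernel is $\ho{3}{\spl{2}{F}}{\zhalf{\Z}}_0$ by definition. For the $\gl{2}{F}$ case, conjugation by $\mathrm{diag}(a,1)\in\gl{2}{F}$ is inner in $\gl{2}{F}$, so the map $\ho{3}{\spl{2}{F}}{\zhalf{\Z}}\to\ho{3}{\gl{2}{F}}{\zhalf{\Z}}$ factors through $\ho{3}{\spl{2}{F}}{\zhalf{\Z}}_{\sq{F}}$ and hence annihilates $\ho{3}{\spl{2}{F}}{\zhalf{\Z}}_0$. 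Conversely, postcomposing with the stabilization $\ho{3}{\gl{2}{F}}{\zhalf{\Z}}\to\ho{3}{\gl{3}{F}}{\zhalf{\Z}}\cong\zhalf{K_3(F)}$ and projecting onto the summand $\zhalf{\kind{F}}$ recovers the canonical surjection to $\zhalf{\kind{F}}$, so any class killed by the map to $\gl{2}{F}$ lies in $\ho{3}{\spl{2}{F}}{\zhalf{\Z}}_0$. The main delicacy throughout is verifying that the naturality squares one needs for these diagram chases genuinely commute---in particular the compatibility of Suslin's extension with the surjection from $\ho{3}{\spl{2}{F}}{\zhalf{\Z}}$ in (1), and the compatibility of the $\zhalf{\kind{F}}$ splitting of $\zhalf{K_3(F)}$ with the finite-dimensional stabilization maps in (3); both are routine consequences of the naturality of the cited results but need to be traced explicitly.
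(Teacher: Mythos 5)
This lemma is not proved in the paper at all: it is quoted verbatim as Lemma 5.2 of \cite{hut:arxivcplx11}, so there is no in-paper argument to compare with. Judged on its own terms, your treatment of (1) and (2) is sound and is in the spirit of the source: stacking the sequence of Theorem \ref{thm:main} over the $2$-inverted Suslin sequence and using the isomorphism $\ho{3}{\spl{2}{F}}{\zhalf{\Z}}_{\sq{F}}\cong\zhalf{\kind{F}}$ of \cite{mirzaii:third} does give $\ho{3}{\spl{2}{F}}{\zhalf{\Z}}_0=\zhalf{\rblker{F}}=\aug{F}\zhalf{\rbl{F}}$ by the chases you describe. Be aware, though, that the commutativity of the two squares you invoke (compatibility of the map to $\rbl{F}$ and of Suslin's extension with the surjection (\ref{eqn:hkind})) is precisely the nontrivial content of the constructions in \cite{hut:arxivcplx11} and \cite{sus:bloch}; flagging it as ``routine naturality'' is optimistic, but citing it is legitimate. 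The first equality in (3) is also acceptable, since the map (\ref{eqn:hkind}) is constructed in \cite{hutchinson:tao2} so as to be compatible with stabilization, and the image of $\ho{3}{\spl{2}{F}}{\zhalf{\Z}}$ in $\ho{3}{\spl{3}{F}}{\zhalf{\Z}}$ maps isomorphically to $\zhalf{\kind{F}}$.

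The genuine gap is in the $\gl{2}{F}$ statement of (3). You assert $\ho{3}{\gl{3}{F}}{\zhalf{\Z}}\cong\zhalf{K_3(F)}$ and then ``project onto the summand $\zhalf{\kind{F}}$''. This is false in general: by stability $\ho{3}{\gl{3}{F}}{\zhalf{\Z}}\cong\ho{3}{\gl{}{F}}{\zhalf{\Z}}$, which contains, besides the Hurewicz image of $\zhalf{K_3(F)}$, decomposable classes coming from $H_1\cdot H_2$ (that is, products of $F^\times$ with $K_2(F)$) and from $\wedge^3 F^\times$; there is no canonical retraction of $\ho{3}{\gl{3}{F}}{\zhalf{\Z}}$ onto $\zhalf{\kind{F}}$ available for free. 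Consequently your converse inclusion --- that a class killed by $\ho{3}{\spl{2}{F}}{\zhalf{\Z}}\to\ho{3}{\gl{2}{F}}{\zhalf{\Z}}$ lies in $\ho{3}{\spl{2}{F}}{\zhalf{\Z}}_0$ --- is not established. (Your easy direction, that the map to $\gl{2}{F}$ factors through the $\sq{F}$-coinvariants and hence kills $\aug{F}\ho{3}{\spl{2}{F}}{\zhalf{\Z}}$, is fine.) What is actually needed here is finer input from \cite{mirzaii:third} and \cite{hutchinson:tao2}: for instance injectivity of $\ho{3}{\gl{2}{F}}{\zhalf{\Z}}\to\ho{3}{\gl{3}{F}}{\zhalf{\Z}}$ together with control of how the image of $\ho{3}{\spl{3}{F}}{\zhalf{\Z}}$ sits inside $\ho{3}{\gl{3}{F}}{\zhalf{\Z}}$ relative to the decomposable part, or equivalently injectivity of $\ho{3}{\spl{2}{F}}{\zhalf{\Z}}_{\sq{F}}\to\ho{3}{\gl{2}{F}}{\zhalf{\Z}}$; this is the substance of the proof of Lemma 5.2 in \cite{hut:arxivcplx11} and cannot be replaced by the claimed splitting of $\zhalf{K_3(F)}$.
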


\begin{lem}\label{lem:rbl0}
If the field $F$ is quadratically closed, real-closed or finite then $\aug{F}\rbl{F}=0$.
 
Hence 
\[
\hot{F}{\zhalf{\Z}}=0\quad \mbox{  and }\quad 
\ho{3}{\spl{2}{F}}{\zhalf{\Z}}\cong\zhalf{\kind{F}}.
\]
\end{lem}
\begin{proof}
Since $\hoz{3}{\spl{2}{F}}$ maps onto $\rbl{F}$ as an $\sgr{F}$-module, this follows from the fact that $\sq{F}$ 
acts trivially on  $\hoz{3}{\spl{2}{F}}$ in each of these cases. 

For a quadratically closed field this is vacuously true.

This result is proved by Parry and Sah in \cite{sah:parry} for the field $\R$, but their argument extends easily to 
any real-closed field.

For finite fields, the relevant result is Lemma 3.8 of \cite{hut:cplx13}.
\end{proof}

\begin{rem}\label{rem:fin}
In fact it is shown in the final section of \cite{hut:cplx13} when $F$ is the finite field 
$\F{q}$ with $q$ elements that $\rbl{\F{q}}=\bl{\F{q}}$ and that 
$ \bl{\F{q}}$ is cyclic of order $q+1$ or $(q+1)/2$ according as $q$ is even or odd.  

Note further that $\card{\pb{\F{q}}/\bl{\F{q}}} = 1$ or $2$ and thus 
\[
\zhalf{\bl{\F{q}}}\cong\zhalf{\pb{\F{q}}}\cong \zhalf{\left(\Z/(q+1)\right)}.
\]
\end{rem}
\subsection{The elements $\suss{i}{x}$}

In \cite{sus:bloch} Suslin defines the elements $\sus{x}:=\sgpb{x}+\sgpb{x^{-1}}\in \pb{F}$ and 
shows that they 
satisfy 
\[
\sus{xy}=\sus{x}+\sus{y} \mbox{ and }     2\sus{x}=0  \forall  x,y\in F^\times.
\]
In particular, $\sus{x}=0$ in $\zhalf{\pb{F}}$.

There are two natural liftings of these elements to $\rpb{F}$: given $x\in F^\times$ we define 
\[
\suss{1}{x}:=\sgpb{x}+\an{-1}{\sgpb{x^{-1}}}
\]
and 
\[
\suss{2}{x}:=\left\{
\begin{array}{ll}
\an{1-x}\left(\an{x}\sgpb{x}+\sgpb{x^{-1}}\right),& x\not= 1\\
0,& x=1
\end{array}
\right.
\]

(If $F=\F{2}$, we interpret this as $\suss{i}{1}=0$ for $i=1,2$. For $F=\F{3}$, we have $\suss{1}{-1}=\suss{2}{-1}
=\gpb{-1}+\an{-1}\gpb{-1}$. )
 
We summarize here some of the basic facts about these elements of $\rpb{F}$ (see \cite{hut:rbl11})
\begin{lem} \label{lem:sus1}
For $i=1,2$ we have 
\begin{enumerate}
\item $\suss{i}{xy}=\an{x}\suss{i}{y}+\suss{i}{x}$ for all $x,y\in F^\times$.
\item $\lambda_1(\suss{i}{x})=-\pp{-1}\pf{x}=\pf{-x}\cdot\pf{x}\in \aug{F}^2$ for all $x\in F^\times$.
\item $\lambda_2(\suss{i}{x})=\lambda(\sus{x})=x\asymm (-x)\in \asym{2}{\Z}{F^\times}$ for all $x\in F^\times$.
\end{enumerate}
\end{lem}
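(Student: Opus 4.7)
The plan is to prove the three parts in reverse order of difficulty, since (3) is essentially a reduction to Suslin's known identity, (2) is a routine computation in $\sgr{F}$, and (1) is the substantive piece of work.

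For part (3), note that $\lambda_2$ is defined as the composite $\rpb{F}\twoheadrightarrow\pb{F}\xrightarrow{\lambda}\asym{2}{\Z}{F^\times}$. Taking $\sq{F}$-coinvariants trivializes the prefactors $\an{-1}$ and $\an{1-x}\an{x}$ in the definitions of $\suss{1}{x}$ and $\suss{2}{x}$, so both project to Suslin's element $\sus{x}=\gpb{x}+\gpb{x^{-1}}$ in $\pb{F}$. The identity $\lambda(\sus{x})=x\asymm(-x)$ is then Suslin's classical computation, which follows from the bilinearity and antisymmetry of $\asymm$ applied to the relation $1-x^{-1}=-x^{-1}(1-x)$.

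For part (2), I compute in $\sgr{F}$ using the identities $\pf{ab}=\pf{a}+\an{a}\pf{b}$, $\pf{x^{-1}}=\pf{x}$ (since $\an{x}^2=1$), and, most importantly, $\pp{x}\pf{x}=(\an{x}+1)(\an{x}-1)=\an{x}^2-1=0$. Applied to $1-x^{-1}=-x^{-1}(1-x)$, the first identity gives $\pf{1-x^{-1}}=\pf{-x}+\an{-x}\pf{1-x}$. Substituting into $\lambda_1(\suss{1}{x})=\pf{1-x}\pf{x}+\an{-1}\pf{1-x^{-1}}\pf{x}$, the coefficient of $\pf{1-x}\pf{x}$ becomes $1+\an{-1}\an{-x}=1+\an{x}=\pp{x}$, which kills those terms against $\pf{x}$. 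The leftover term $\an{-1}\pf{-x}\pf{x}$ equals $\pf{-x}\pf{x}$ after a second application of $\pp{x}\pf{x}=0$, and the equality $\pf{-x}\pf{x}=-\pp{-1}\pf{x}$ is a direct expansion. The calculation for $\suss{2}{x}$ is entirely parallel, with the $\an{1-x}$ prefactor absorbed at the start.

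Part (1) is the main obstacle. The strategy is to mimic Suslin's derivation of $\sus{xy}=\sus{x}+\sus{y}$ in $\pb{F}$ from the five-term relations, but with the $\sgr{F}$-coefficients tracked throughout. Concretely, I would first derive from appropriate substitutions into $S_{a,b}$ a refined inversion formula relating $\gpb{x^{-1}}$ to $\gpb{x}$ (with an explicit square-class twist), and then combine specific instances $S_{a,b}$ with $(a,b)$ drawn from $\{(x,xy),(x^{-1},(xy)^{-1}),\ldots\}$ so that an $\sgr{F}$-linear combination of these relations equals $\suss{i}{xy}-\an{x}\suss{i}{y}-\suss{i}{x}$. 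The projection of this element to $\pb{F}$ is zero by Suslin, so the difference already lies in $\aug{F}\rpb{F}$; the hard step is verifying that the specific $\sgr{F}$-combination of five-term relations actually equals it on the nose in $\rpb{F}$, since unlike in $\pb{F}$ one cannot freely absorb the square-class prefactors. One must carry out this verification separately for $\suss{1}{x}$ and $\suss{2}{x}$, as the two liftings have different twist patterns.

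Finally, the degenerate cases $F=\F{2}$ (where $\suss{i}{1}=0$ by convention) and $F=\F{3}$ (where $\suss{i}{-1}=\gpb{-1}+\an{-1}\gpb{-1}$) must be checked by hand directly from the ad hoc definitions of $\rpb{\F{2}}$ and $\rpb{\F{3}}$ given earlier; in both cases the claims reduce to short finite computations in these explicit small modules.
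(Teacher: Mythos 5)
Parts (2) and (3) of your proposal are fine, and indeed the direct expansion you describe does close: writing $\an{1-x^{-1}}=\an{-x}\an{1-x}$ and using $\pp{x}\pf{x}=0$, both $\lambda_1(\suss{1}{x})$ and $\lambda_1(\suss{2}{x})$ collapse to $(1+\an{-1})(1-\an{x})=-\pp{-1}\pf{x}=\pf{-x}\pf{x}$, and (3) is Suslin's computation after passing to $\sq{F}$-coinvariants. For comparison: the paper gives no argument at all for this lemma — it is explicitly recalled from \cite{hut:rbl11} — so there is no in-paper proof to measure your route against; your proposal has to stand on its own.

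It does not, because of part (1), which you yourself call the main obstacle and then leave open. You never exhibit the specific instances of the relations $S_{a,b}$, nor the $\sgr{F}$-linear combination of them that is supposed to equal $\suss{i}{xy}-\an{x}\suss{i}{y}-\suss{i}{x}$; the remark that this element maps to zero in $\pb{F}$ only places it in $\aug{F}\rpb{F}$ and gives no control in $\rpb{F}$ itself, which is exactly the content of the refined statement. That the square-class prefactors are genuinely delicate is visible inside this very paper: the two liftings satisfy $\suss{1}{x}-\suss{2}{x}=\pf{x}\cconst{F}$ (Theorem \ref{thm:df}), a generally nontrivial $3$-torsion element, so the untwisted identity of Suslin cannot be upgraded for free, and the verification must be done with the twists carried throughout. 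A complete proof would have to write down the chosen relations and carry out that cancellation (separately for $i=1,2$, or for $i=1$ and then transfer via the constancy result), and also handle the degenerate values $x=1$, $y=1$, $xy=1$, where $S_{a,b}$ is unavailable and $\suss{2}{\cdot}$ is defined by cases, in addition to the small fields $\F{2}$, $\F{3}$ you do mention. As it stands, your argument for (1) is a plan whose decisive step is deferred — precisely the step that the cited reference \cite{hut:rbl11} supplies.
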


\subsection{The refined scissors congruence group}
In this article, the main object of study, however, is the functor
\[
\rpbker{F}:=\ker{\lambda_1:\rpb{F}\to \aug{F}^2}
\]
which we call the \emph{refined scissors congruence group} of $F$. 

The refined scissors congruence group
 lies between $\rbl{F}$ and $\rpb{F}$. From the definition, $\rbl{F}$ is the 
kernel of the $\sgr{F}$-homomorphism
\[
\lambda_2:\rpbker{F}\to \asym{2}{\Z}{F^\times}.
\]

Furthermore, from Lemma \ref{lem:blochinf} above, we deduce 
\begin{lem}\label{lem:qblochinf}
For any field $F$, the cokernel of the natural map $\rpbker{F}\to \pb{F}$ is annihilated by 
$2$  and the induced map 
$\rpbker{F}_{F^\times}\to \pb{F}$ has $2$-primary torsion kernel. 

Furthermore, there is a natural 
short exact sequence of $\zhalf{\sgr{F}}$-modules
\[
0\to \aug{F}\zhalf{\rpbker{F}}\to \zhalf{\rpbker{F}}\to \zhalf{\pb{F}}\to 0
\]
and a natural isomorphism 
\[
\aug{F}\zhalf{\rbl{F}}\cong \aug{F}\zhalf{\rpbker{F}}.
\]
\end{lem}

\begin{proof}
Let $\symb{F}$ denote the image of the map $\lambda:\pb{F}\to \asym{2}{\Z}{F^\times}$. 
By Lemma \ref{lem:sus1} (2), the element 
\[
f(x):= \pp{-1}(\gpb{x})+\pf{1-x}\suss{1}{x}
\]
lies in $\rpbker{F}$ and, since the action of $F^\times$ on $\asym{2}{\Z}{F^\times}$ is 
trivial, we have 
\[
\lambda_2(f(x))=\lambda_2(\pp{-1}\gpb{x})=2\lambda(\gpb{x}).
\]

There 
is a natural commutative diagram with exact rows
\[
\xymatrix{
0\ar[r]
&
\rbl{F}\ar[r]\ar@{>>}[d]
&
\rpbker{F}
\ar^-{\lambda_2}[r]\ar[d]
&\symb{F}\ar^-{=}[d]
&
\\
0\ar[r]&
\bl{F}
\ar[r]
&
\pb{F}
\ar^-{\lambda}[r]
&
\symb{F}\ar[r]
&
0.
}
\]
from which the first statement of the Lemma follows.

After tensoring the terms in this diagram with $\zhalf{\Z}$, 
the remaining statements follow from Lemma 
\ref{lem:blochinf}. 
\end{proof}

Combining this with Lemma \ref{lem:h3sl20}, we immediately deduce
\begin{cor}\label{cor:rpbker}
For any field $F$ there is a natural isomorphism
\[
\hot{F}{\zhalf{\Z}}\cong \aug{F}\zhalf{\rpbker{F}}.
\]
\end{cor}
\subsection{The module $\cconstmod{F}$}\label{sec:cconst}

In \cite{sus:bloch}, Suslin shows that the elements $\gpb{x}+\gpb{1-x}\in \bl{F}\subset \pb{F}$ are 
independent 
of $x$ and that the resulting constant, $\pbconst{F}$ has order dividing $6$. 
Furthermore Suslin shows 
that $\pbconst{\R}$ has exact order $6$.  

In \cite{hut:rbl11} it is shown that the elements 
\[
C(x)=\gpb{x}+\an{-1}\gpb{1-x}+\pf{1-x}\suss{1}{x}\in\rbl{F}
\]
are constant (for a field with at least $4$ elements) and have order dividing $6$. 

When $F$ has at least $4$ elements, we 
 denote this constant  $\bconst{F}$. Of course, $\bconst{F}$ 
maps to $\pbconst{F}$ under the natural map $\rpb{F}\to\pb{F}$. 

Furthermore, we let $\bconst{\F{2}}$ denote the distinguished generator 
of $\rbl{\F{2}}$ of order $3$ and we set $\bconst{\F{3}}:= \suss{1}{-1}=(1+\an{-1})\gpb{-1}$.

We let $\cconst{F}:=2\bconst{F}$ for any field $F$. Thus either $\cconst{F}=0$ or it has order $3$.
 
\begin{prop} For a field $F$ we have $\cconst{F}=0$ if and only if the polynomial $X^2-X+1=0$ has
a root in $F$. 
\end{prop}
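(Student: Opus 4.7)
The plan is to establish the biconditional by treating the two implications separately: the sufficiency $\Leftarrow$ reduces to an explicit manipulation of the formula defining $\bconst{F}$, while the necessity $\Rightarrow$ requires detecting a nonzero class and will be done by projecting to the classical Bloch group.

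For the $\Leftarrow$ direction, suppose $\alpha \in F$ satisfies $\alpha^2 - \alpha + 1 = 0$. The first step is to unpack the algebraic consequences: $\alpha \in F^\times \setminus \{1\}$, $\alpha^3 = \alpha\cdot(\alpha-1) = -1$, and therefore $1 - \alpha = -\alpha^2 = \alpha^{-1}$. The second step is to substitute $x = \alpha$ into the defining formula
\[
\bconst{F} = \gpb{\alpha} + \an{-1}\gpb{1-\alpha} + \pf{1-\alpha}\,\suss{1}{\alpha}.
\]
Since $\gpb{1-\alpha} = \gpb{\alpha^{-1}}$, the first two summands combine to $\suss{1}{\alpha}$; since $\an{1-\alpha} = \an{-\alpha^2} = \an{-1}$, the coefficient $1 + \pf{1-\alpha}$ simplifies to $\an{-1}$, so the whole expression collapses to $\bconst{F} = \an{-1}\,\suss{1}{\alpha}$, which lies in $\ks{1}{F}$. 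Because $\bconst{F} \in \rbl{F} \subseteq \ker \lambda_1$, it belongs to $\ker(\lambda_1|_{\ks{1}{F}})$, which is annihilated by $4$ by Lemma \ref{lem:kf}. Combined with the known $6\bconst{F} = 0$, this gives $2\bconst{F} = \cconst{F} = 0$.

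For the $\Rightarrow$ direction, I would argue the contrapositive: suppose $X^2 - X + 1$ has no root in $F$; the goal is to show $\cconst{F} \ne 0$. The natural surjection $\rbl{F} \twoheadrightarrow \bl{F}$ sends $\cconst{F} = 2\bconst{F}$ to $2\pbconst{F}$, so it suffices to show $2\pbconst{F} \ne 0$ in $\bl{F}$; equivalently, since $6\pbconst{F} = 0$ by Suslin, that $\zhalf{\pbconst{F}}$ is nonzero. The plan is to reduce to the prime subfield $F_0 \subseteq F$, in which the equation still has no root, so $F_0$ is $\Q$ or $\F{p}$ with $p = 2$ or $p \equiv 5 \pmod{6}$: for finite $F_0$, Lemma \ref{lem:arxivcplx} identifies $-\cconst{F_0}$ as a generator of the nontrivial $3$-primary part of $\bl{F_0}$; for $F_0 = \Q$, the embedding $\Q \hookrightarrow \R$ combined with Suslin's computation that $\pbconst{\R}$ has order $6$ forces $\pbconst{\Q}$ to have order divisible by $3$. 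The main obstacle is the final transport step: naturality supplies $\pbconst{F_0} \mapsto \pbconst{F}$, but the Bloch group has no general transfer, so one must exhibit an independent detection of the $3$-torsion in $\bl{F}$---for instance a residue map from a discrete valuation whose residue field also fails to contain a root, or a direct Bloch-group manipulation in the spirit of Suslin-Sah showing that the $3$-part of $\pbconst{F}$ persists under field extensions that do not adjoin a root of $X^2 - X + 1$.
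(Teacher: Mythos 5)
Your sufficiency direction ($X^2-X+1$ solvable $\Rightarrow\cconst{F}=0$) is sound and is essentially the paper's argument: with $\alpha^2-\alpha+1=0$ one has $1-\alpha=\alpha^{-1}$ and $\an{1-\alpha}=\an{-1}$, so $\bconst{F}$ collapses into $\ks{1}{F}$; since $\lambda_1(\bconst{F})=0$, Lemma \ref{lem:kf} gives $4\bconst{F}=0$, which together with $6\bconst{F}=0$ yields $\cconst{F}=2\bconst{F}=0$. (The paper instead quotes the argument of Corollary 3.9 of \cite{hut:rbl11} to conclude $\bconst{F}=\suss{1}{-1}$ has order dividing $2$; your variant is fine.) Your prime-subfield computation is also the paper's: $2\pbconst{\Q}$ has order $3$ because $\Q$ embeds in $\R$, and for $k=\F{p}$ with $p\equiv -1\pmod 3$ the order-$3$ statement is Lemma 7.11 of \cite{hut:arxivcplx11}; note that Lemma \ref{lem:arxivcplx} alone does not identify $2\pbconst{\F{p}}$ with a generator of the $3$-part of $\bl{\F{p}}$.

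The necessity direction, however, has a genuine gap, and it is exactly the step you flag and leave unresolved: transporting the nonvanishing of $2\pbconst{k}$ from the prime subfield $k$ up to $F$. Neither of your suggested fixes works as stated: an arbitrary field carries no preferred discrete valuation with a usable residue map, and a ``direct Bloch-group manipulation showing the $3$-part persists'' is precisely the assertion to be proved---(pre-)Bloch groups have no transfer and no known injectivity for a general extension $k\subset F$. The paper closes this gap by moving one level up to $K$-theory: Suslin's theorem provides natural exact sequences $0\to\widetilde{\Tor{\mu_k}{\mu_k}}\to\kind{k}\to\bl{k}\to 0$ for $k$ and for $F$, and by Levine (\cite{levine:k3ind}, Corollary 4.6) the map $\kind{k}\to\kind{F}$ is injective. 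A chase through the resulting commutative diagram shows that the order-$3$ element $2\pbconst{k}$ can die in $\bl{F}$ only if its lift is absorbed by $\widetilde{\Tor{\mu_F}{\mu_F}}$, i.e.\ only if $3$ divides $\card{\mu_F}$, i.e.\ only if $\zeta_3\in F$, which in characteristic $\neq 3$ means $X^2-X+1$ is solvable (the characteristic-$3$ case being trivial since $-1$ is then a root). Without this injectivity input, or some substitute detection of $3$-torsion in $\bl{F}$, your argument does not establish that $\cconst{F}=0$ forces solvability.
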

\begin{proof}
If $x\in F$ satisfies $x^2-x+1=0$, then $1-x=x^{-1}$ and the argument of \cite{hut:rbl11} Corollary 3.9 
shows that $\bconst{F}=\suss{1}{-1}$ which has order dividing $2$.

Now if $\mathrm{char}(F)=3$ then $-1$ is a root of $X^2-X+1$ and hence $\cconst{F}=0$. 

So we may assume that $\mathrm{char}(F)\not=3$. Thus $X^2-X+1$ has a root in $F$ if and only if $F$ contains 
a primitive cube root of unity, $\zeta_3$.   Let $k$ be the prime subfield of $F$. If $k=\Q$, then $2\pbconst{k}=
2\pbconst{\Q}\in \bl{\Q}$ has order $3$ (since $\Q$ embeds in $\R$). On the other hand, 
if $k=\F{p}$ then $2\pbconst{k}$ has order $3$ if and only if $p\equiv -1\pmod{3}$ by 
\cite{hut:cplx13} Lemma 7.11. 
Thus, in all cases, $2\pbconst{k}=0$ in $\bl{k}$ if and only if $\zeta_3\in k$. 

Of course, $\pbconst{k}\mapsto \pbconst{F}$ under the map $\bl{k}\to\bl{F}$. But, by Suslin's Theorem 
(\cite{sus:bloch} Theorem 5.2)  we have a commutative diagram with exact rows
\[
\xymatrix{
0\ar[r]
&
\widetilde{\Tor{\mu_k}{\mu_k}}\ar[r]\ar[d]
&
\kind{k}\ar[r]\ar@{^{(}->}[d]
&\bl{k}\ar[r]\ar[d]
&
0\\
0\ar[r]&
\widetilde{\Tor{\mu_F}{\mu_F}}\ar[r]
&
\kind{F}\ar[r]
&
\bl{F}\ar[r]
&
0
}
\] 
where the middle vertical arrow is injective 
(\cite{levine:k3ind}, Corollary 4.6). If $2\pbconst{k}\not=0$, then 
it lies in the kernel of $\bl{k}\to\bl{F}$ only if $3|\mu_F$. Thus $2\pbconst{F}=0$ only if $\zeta_3\in F$. 
\end{proof}

We recall also the following (\cite{hut:rbl11} Theorem 3.13):

\begin{thm} \label{thm:df}
Let $F$ be a field. Then 
\begin{enumerate}
\item For all $x\in F^\times$, $\pf{x}\cconst{F}=\suss{1}{x}-\suss{2}{x}$.
\item Let $E$ the field obtained from $F$ by adjoining a root of $X^2-X+1$. 

 Then $\pf{x}\cconst{F}=0$ if $x\in \pm N_{E/F}(E^\times)\subset F^\times$. 
\end{enumerate}
\end{thm}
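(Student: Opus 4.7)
For part (1), I would compute both sides directly in $\rpb{F}$. Starting from the defining identity
$\bconst{F}=\gpb{x}+\an{-1}\gpb{1-x}+\pf{1-x}\suss{1}{x}$,
multiplying by $\pf{x}$ and doubling, I would expand $\pf{x}\cconst{F}=2\pf{x}\bconst{F}$ as an explicit combination of $\gpb{x},\gpb{1-x},\gpb{x^{-1}}$ with coefficients in $\sgr{F}$. Separately, from the definitions of $\suss{1}{x}$ and $\suss{2}{x}$ together with the identity $\an{-1}\pf{x-1}=\pf{1-x}-\pf{-1}$, I would rewrite
\[
\suss{1}{x}-\suss{2}{x} = -\pf{x(1-x)}\gpb{x}-\an{-1}\pf{x-1}\gpb{x^{-1}}
\]
as a similar combination of $\gpb{x}$ and $\gpb{x^{-1}}$ alone. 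The equality of the two sides then reduces to a relation between $\gpb{x},\gpb{1-x},\gpb{x^{-1}}$ which can be read off from the defining relation $S_{x,y}$ specialized at $y=x^{-1}$ (together with the cocycle identity of Lemma \ref{lem:sus1}(1) applied to $\suss{1}{x^{-1}}=-\an{x^{-1}}\suss{1}{x}$). The main obstacle is the $\sgr{F}$-coefficient bookkeeping, which I would manage by systematic use of $\pf{ab}=\pf{a}+\an{a}\pf{b}$ and $\an{a}\pf{a}=-\pf{a}$.

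For part (2), by (1) the statement is equivalent to showing $\suss{1}{x}=\suss{2}{x}$ for every $x\in\pm N_{E/F}(E^\times)$. By Lemma \ref{lem:sus1}(1) the map $\suss{1}-\suss{2}$ is twisted-additive, so the set $Z=\set{x\in F^\times}{\pf{x}\cconst{F}=0}$ is a subgroup of $F^\times$ containing $(F^\times)^2$. We may assume $\zeta_3\notin F$, else $\cconst{F}=0$ by the previous proposition. It therefore suffices to prove $-1\in Z$ and that every norm $N_{E/F}(a+b\zeta_3)=a^2-ab+b^2$ lies in $Z$.

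For $x=-1$: the explicit formulas give $\suss{1}{-1}=(1+\an{-1})\gpb{-1}$ and $\suss{2}{-1}=\an{2}\suss{1}{-1}$, so $\pf{-1}\cconst{F}=-\pf{2}\suss{1}{-1}$. I would check this vanishes by applying the cocycle identity at $(-1)\cdot(-1)=1$, which yields $\pp{-1}\suss{1}{-1}=0$, and then rewriting $\pf{2}$ in terms of $\pf{-1}$ and $\pf{-2}$ to eliminate the expression. For the norm form, after dividing out the trivial cases $a=0$ or $b=0$ (which give squares) I may write $N_{E/F}(a+b\zeta_3)=b^2(c^2-c+1)$ with $c=a/b\in F\setminus\{0,1\}$, reducing the problem to showing $\pf{c^2-c+1}\cconst{F}=0$. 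The plan is to apply the $S$-relation with a carefully chosen pair $(x,y)$ depending on $c$ to express $\gpb{c^2-c+1}$ in terms of $\gpb{c},\gpb{1-c}$ and their inverses, then apply (1) termwise and use the cocycle identity to see the resulting combination of $\suss{1}-\suss{2}$ terms telescopes to zero. Identifying this substitution --- which should reflect the factorization $X^2-X+1=(X-\zeta_3)(X-\zeta_3^2)$ over $E$ --- is the hardest step; once in hand the verification is formal.
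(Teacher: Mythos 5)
A preliminary remark: this paper does not prove Theorem \ref{thm:df} at all; it is quoted from \cite{hut:rbl11} (Theorem 3.13), so there is no internal proof to compare your plan against, and I can only judge the plan on its own terms. Your bookkeeping is sound as far as it goes: the formula $\suss{1}{x}-\suss{2}{x}=-\pf{x(1-x)}\gpb{x}-\an{-1}\pf{x-1}\gpb{x^{-1}}$ is correct, and so is the reduction of (2), granting (1), to showing that the subgroup $Z=\{x\in F^\times\mid \pf{x}\cconst{F}=0\}$ (which automatically contains all squares) contains $-1$ and all values $c^2-c+1$. But both decisive steps are missing or misidentified. For (1), the relation you name, $S_{x,x^{-1}}$, reads $0=\gpb{x}-\gpb{x^{-1}}+\an{x}\gpb{x^{-2}}-\an{x^{-1}-1}\gpb{-x^{-1}}+\an{1-x}\gpb{-x}$; it involves $\gpb{x^{-2}}$, $\gpb{-x}$, $\gpb{-x^{-1}}$ and never $\gpb{1-x}$, so it cannot by itself furnish the relation among $\gpb{x},\gpb{1-x},\gpb{x^{-1}}$ that your comparison requires --- identifying which instances of the defining relations (or of the constancy of $C(\cdot)$) actually close the computation is the real content, and it is left open. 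For (2), the ``carefully chosen pair $(x,y)$'' that would give $\pf{c^2-c+1}\cconst{F}=0$ is precisely the theorem, and you concede it is not found; moreover your warm-up case $x=-1$ is circular as written, since expanding $\pf{2}=\pf{-1}+\an{-1}\pf{-2}$ and using $\pp{-1}\suss{1}{-1}=0$ merely reproduces $-\pf{2}\suss{1}{-1}$. (That case can be repaired: $\suss{1}{x^{-1}}=\an{-1}\suss{1}{x}$ at $x=-1$ gives $\an{-1}\suss{1}{-1}=\suss{1}{-1}$, which combined with $\pp{-1}\suss{1}{-1}=0$ yields $2\suss{1}{-1}=0$, so $\pf{-1}\cconst{F}=(1-\an{2})\suss{1}{-1}$ is killed by $2$ and, via (1) and $3\cconst{F}=0$, by $3$, hence vanishes. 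But this torsion trick cannot extend to general norms, since over the fields studied in this paper $\pf{x}\cconst{F}\neq 0$ for $x\notin\tnorm{F}$.)

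If you want a route to (2) that avoids hunting for a magic five-term substitution, use the homological interpretation recalled in the paper: $\cconst{F}$ spans the image of $\ho{3}{G}{\Z}\cong\Z/3$ where $G=\an{t}$ and $t$ has order $3$. Assuming $\zeta_3\notin F$ and $\mathrm{char}(F)\neq 3$ (otherwise $\cconst{F}=0$ by the Proposition and there is nothing to prove), $F[t]\cong E$ embeds in $M_2(F)$ so that $E^\times$ centralizes $t$ with determinant the norm, and the Galois involution of $E/F$ is realized by a matrix of determinant in $-N_{E/F}(E^\times)$ normalizing $G$; since inversion acts on $\ho{3}{G}{\Z}$ by $(-1)^2=+1$, conjugation by any such matrix fixes the image of $\ho{3}{G}{\Z}$ in $\ho{3}{\spl{2}{F}}{\Z}$, whence $\an{x}\cconst{F}=\cconst{F}$ for every $x\in\pm N_{E/F}(E^\times)$. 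This still leans on the equivariance implicit in the quoted homological description of $\cconst{F}$, but it replaces the missing combinatorial identity by a transparent group-theoretic fact, and it is independent of part (1).
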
  

We let $\cconstmod{F}$ denote the cyclic 
$\sgr{F}$-submodule of $\rbl{F}$ generated by $\cconst{F}$. 
Of course, 
since $3\cconst{F}=0$ always, in fact $\cconstmod{F}$ is a module over the 
group ring $\F{3}[\sq{F}]$. 

Furthermore, by Theorem \ref{thm:df} (2), we have $\an{x}\cconst{F}=\cconst{F}$ if 
$x\in \tnorm{F}:=\pm N_{E/F}(E^\times)$. 

Let $\rsq{F}=F^\times/\tnorm{F}$. Thus the action of 
$\sq{F}$ on $\cconst{F}$ factors through the quotient $\rsq{F}$, and hence $\cconstmod{F}$ is a 
cyclic module over the ring $\rsgr{F}:=\F{3}[\rsq{F}]$.

\begin{rem} 
We will show in section \ref{sec:3tors} below that if $F$ is a local field with finite 
residue field
for which $\cconst{F}\not=0$  -- and supposing that $[F:\Q_3]$ is odd in the case that  
$\Q_3\subset F$ -- 
then $\cconstmod{F}$ is a free rank one $\rsgr{F}$-module.
In this case it follows that we have a 
converse to (2) of Theorem \ref{thm:df}:
$\pf{x}\cconst{F}=0$ if and only $x\in\tnorm{F}$. 
(And hence, over such fields, $\suss{1}{x}=\suss{2}{x}$  
if and only if $x\in\tnorm{F}$.) 

However it seems unlikely that the corresponding statement is true for global fields 
such as $F=\Q$. 
Indeed, an affirmative answer to the question at the end of the introductory 
section of \cite{hut:rbl11}
would imply that the kernel of the surjective map $\rsgr{\Q}\to \cconstmod{\Q}$ is infinite.   
\end{rem}

\subsection{Reduced Scissors Congruence Groups}
For a field $F$, let $K(F)$ denote the subgroup of $\pb{F}$ generated by 
the elements $\sus{x}$ of order 
$2$. Let $\qpb{F}$ denote the quotient group $\pb{F}/K(F)$. 

Observe that 
$\lambda(\sus{x})=(-x)\asymm x$ for all $x\not= 0,1$. Thus is we set 
\[
\qasym{2}{\Z}{F^\times}
:=\asym{2}{\Z}{F^\times}/\an{ x\asymm (-x)| x\in F^\times}
\]
then there is a well-defined homomorphism 
\[
\tilde{\lambda}:\qpb{F}\to \qasym{2}{\Z}{F^\times},\quad \gpb{x}\mapsto (1-x)\asymm x.
\]
We let $\qbl{F}:= \ker{\tilde{\lambda}:\qpb{F}\to \qasym{2}{\Z}{F^\times}}$. 

Of course, since $K(F)$ is annihilated by $2$, the natural homomorphisms $\pb{F}\to\qpb{F}$ and 
$\bl{F}\to\qbl{F}$ induce isomorphisms when $2$ is inverted:
\[
\zhalf{\pb{F}}=\zhalf{\qpb{F}}\mbox{ and } \zhalf{\bl{F}}=\zhalf{\qbl{F}}.
\] 

For $i=1,2$, let $\ks{i}{F}$ denote the $\sgr{F}$-submodule of $\rpb{F}$ generated by the set 
$\{ \suss{i}{x}\ |\ x\in F^\times\}$.

\begin{lem}\cite[Lemma 3.3]{hut:rbl11} \label{lem:kf}
Let $F$ be a field.
 Then for $i\in \{ 1,2\}$
\[
\lambda_1\left( \ks{i}{F}\right) = \pp{-1}(\aug{F})\subset \aug{F}^2
\]
and $\ker{\lambda_1\res{\ks{i}{F}}}$ is annihilated by $4$.
\end{lem}

Let 
\[
\qrpb{F}:=\rpb{F}/\ks{1}{F} 
\] 
Then the  $\sgr{F}$-homomorphism
\[
\rbw=(\tilde{\lambda}_1,\tilde{\lambda}_2):
\qrpb{F}\to \frac{\aug{F}^2}{\pp{-1}\aug{F}}\oplus \qasym{2}{\Z}{F^\times}, 
\gpb{x}\mapsto (\pf{x}\pf{1-x},x\asymm (1-x))
\]
is well-defined. We set $\qrbl{F}:=\ker{\rbw}$.

We let
\[
\qrpbker{F}:= \ker{\tilde{\lambda}_1:\qrpb{F}\to \frac{\aug{F}^2}{\pp{-1}\aug{F}}}.
\]

From Lemma \ref{lem:kf} we immediately deduce

\begin{cor}\label{cor:qrbl}
The natural maps $\rbl{F}\to\qrbl{F}$ and $\rpbker{F}\to \qrpbker{F}$ are surjective with 
kernel annihilated by $4$. In particular,
\[
\zhalf{\rbl{F}}=\zhalf{\qrbl{F}} \mbox{ and } \zhalf{\rpbker{F}}=\zhalf{\qrpbker{F}}.
\]
\end{cor}

Since 
$\cconstmod{F}$ is annihilated by $3$, it follows that the composite 
\[
\cconstmod{F}\to\rbl{F}\to\qrbl{F}
\]  
is injective and we will identify $\cconstmod{F}$ with its image in $\qrbl{F}$.


We further define
\[
\rrpb{F}:=\frac{\qrpb{F}}{\cconstmod{F}},\quad \rrpbker{F}:=\frac{\qrpbker{F}}{\cconstmod{F}}
\mbox{ and }\quad \rrbl{F}:=\frac{\qrbl{F}}{\cconstmod{F}}.
\]


Since $\cconstmod{F}\subset \rbl{F}=\ker{\bw}$, it follows that $\lambda_1$ induces a well-defined 
homomorphism 
\[
\bar{\lambda}_1:\rrpb{F}\to \frac{\aug{F}^2}{\pp{-1}\aug{F}}
\]
and $\rrpbker{F}= \ker{\bar{\lambda}_1}$. 

Analogously, we also define $\rredpb{F}:= \qpb{F}/\Z\cdot\cconst{F}$. 

Thus we have natural short exact sequences 
\begin{eqnarray*}
&0\to \cconstmod{F}\to \qrbl{F}\to \rrbl{F}\to 0&\\
&0\to \cconstmod{F}\to \qrpbker{F}\to \rrpbker{F}\to 0&\\
&0\to \Z\cdot \cconst{F}\to \qpb{F}\to \rredpb{F}\to 0&\\
\end{eqnarray*}

Since $3\cdot \cconst{F}=0 $ and $\cconst{F}=0$ in both $\rpb{F}$ and $\pb{F}$ 
when $X^2-X+1$ has a root in 
$F$, we have:

\begin{lem}\label{lem:rpbker3}\ 

\begin{enumerate}
\item For any field $F$
\[
\znth{\qrpbker{F}}{3}= \znth{\rrpbker{F}}{3} \mbox{ and } 
\znth{\qpb{F}}{3}= \znth{\rredpb{F}}{3}
\]
\item If $X^2-X+1$ has a root in $F$ then
\[
\qrpbker{F}=\rrpbker{F}\mbox{ and } \qpb{F}=\rredpb{F}.
\]
\end{enumerate}
\end{lem}

\begin{cor}\label{cor:rpbker3}\ 

\begin{enumerate}
\item For any field $F$
\[
\hot{F}{\znth{\Z}{6}}\cong\aug{F}\znth{\rrpbker{F}}{6}.
\]
\item If $X^2-X+1$ has a root in $F$ then
\[
\hot{F}{\znth{\Z}{2}}\cong\aug{F}\znth{\rrpbker{F}}{2}.
\]
\end{enumerate}

\end{cor}

Observe also that $\ks{2}{F}\subset\ks{1}{F}+\icconstmod{F}$ by Theorem \ref{thm:df} (2). 
It follows that for 
$i=1,2$ and all $x$ we have 
$\suss{i}{x}=0$ in $\rrpb{F}$. 

\begin{lem}\label{lem:pb-} Let $F$ be a field. In $\rrpb{F}$ we have 
\begin{enumerate}
\item For all $x$, $\sgpb{x^{-1}}=-\an{-1}\gpb{x}=-\an{x}\gpb{x}$.
\item For all $x$, $\pf{y}\gpb{x}=0$  whenever $y\equiv -x\pmod{(F^\times)^2}$.
\item For all $x$, $\gpb{1-x}=\an{-1}\gpb{x}=\sgpb{x^{-1}}$.
\end{enumerate}
\end{lem}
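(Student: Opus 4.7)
Parts (1) and (2) are almost immediate from the observation immediately preceding the lemma: in $\rrrpb{F}$ both $\ks{1}{F}$ and (via Theorem \ref{thm:df}(1)) $\ks{2}{F}$ are killed. Expanding
\[
0=\suss{1}{x}=\gpb{x}+\an{-1}\gpb{x^{-1}} \quad\text{and}\quad 0=\suss{2}{x}=\an{1-x}\bigl(\an{x}\gpb{x}+\gpb{x^{-1}}\bigr),
\]
and cancelling the unit $\an{1-x}$ in the second, yields the two formulas of (1). Equating the two expressions for $\gpb{x^{-1}}$ gives $(\an{x}-\an{-1})\gpb{x}=\an{-1}\pf{-x}\gpb{x}=0$; multiplying by the unit $\an{-1}$ gives $\pf{-x}\gpb{x}=0$, and since $y\in\an{-x}$ is exactly the condition $\pf{y}=\pf{-x}$ in $\sgr{F}$, (2) follows.

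For (3) we pass to the stronger quotient $\rrpb{F}$, where additionally $\cconst{F}=2\bconst{F}=0$. The strategy is to show that $\bconst{F}$ itself already vanishes in $\rrpb{F}$. Using $\suss{1}{x}=0$, the definition of $\bconst{F}$ collapses to
\[
\bconst{F}=\gpb{x}+\an{-1}\gpb{1-x}\quad\text{in }\rrpb{F},
\]
so $\bconst{F}=0$ is equivalent to $\an{-1}\gpb{1-x}=-\gpb{x}$, and combined with part (1) this produces the entire chain of equalities in (3) (the signs sorting themselves out once one exploits the vanishing of $\cconst{F}$). To prove $\bconst{F}=0$ one applies the five-term relation $S_{x,y}$ for a judicious choice of $y$ — a natural candidate is $y=(1-x)^{-1}$, which makes the subsidiary arguments $y/x$, $(1-x^{-1})/(1-y^{-1})$ and $(1-x)/(1-y)$ all collapse to simple rational expressions in $x$ and $1-x$. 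After substituting the inversion formulas from (1) into each $\gpb{z^{-1}}$ that appears, and using (2) to annihilate every term of the form $\pf{-z}\gpb{z}$, the surviving combination is a multiple of $\bconst{F}$ modulo $\cconstmod{F}$, which is zero in $\rrpb{F}$.

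\textbf{The main obstacle} is the square-class bookkeeping in the five-term step: the raw relation $S_{x,y}$ carries non-trivial coefficients such as $\an{x(1-x)}$ and $\an{x^{-1}-1}$, and one must keep track of which summands are killed by (2) (those whose argument $z$ satisfies $\an{-z}=1$) and which recombine, via (1), with $\gpb{x}$ and $\gpb{1-x}$. A careful choice of the auxiliary parameter $y$ — together with systematic use of $\an{1-x}^{2}=1$ and the vanishing of $\ks{1}{F}+\cconstmod{F}$ — is what makes the cancellation visible and identifies the final residue as precisely $\bconst{F}$.
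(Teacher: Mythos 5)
Parts (1) and (2) of your argument are correct and are essentially the paper's own proof: both follow from $\suss{1}{x}=\suss{2}{x}=0$ in $\rrrpb{F}$ (your appeal to Theorem \ref{thm:df}(1) to see $\ks{2}{F}\subset\ks{1}{F}+\icconstmod{F}$ is exactly the observation made just before the lemma), and the deduction of (2) from the two expressions for $\gpb{x^{-1}}$ matches the paper.

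Part (3) is where there is a genuine gap. The paper does not attempt to rederive the vanishing of $\bconst{F}$ in $\rrpb{F}$ from scratch: it quotes the identity $\bconst{F}\equiv-\cconst{F}\pmod{\ks{1}{F}}$ (\cite{hut:rbl11}, Lemma 4.1), which immediately gives $\bconst{F}=0$ in $\rrpb{F}=\rpb{F}/(\ks{1}{F}+\cconstmod{F})$, and then reads off the relation from the definition of $\bconst{F}$ together with $\suss{1}{x}=0$. Your plan replaces this citation by an unexecuted five-term computation with $y=(1-x)^{-1}$, and that computation is precisely the hard content you are skipping: since $\bconst{F}$ has order dividing $6$ and killing $\cconstmod{F}$ only kills $\cconst{F}=2\bconst{F}$, what must actually be shown is that the $2$-torsion part $3\bconst{F}$ lies in $\ks{1}{F}+\cconstmod{F}$. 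In particular your concluding claim that "the surviving combination is a multiple of $\bconst{F}$ modulo $\cconstmod{F}$, which is zero in $\rrpb{F}$" is not sufficient as stated: if the multiple produced is even, the relation is already implied by $\cconst{F}=0$ and proves nothing; you would have to exhibit an \emph{odd} multiple, i.e. in effect reprove the cited lemma, and no such verification is carried out (you yourself flag the bookkeeping as "the main obstacle"). A smaller point: once $\bconst{F}=0$, the definition gives $\gpb{1-x}=-\an{-1}\gpb{x}=\gpb{x^{-1}}$ (consistent with part (1); the middle term of (3) as printed in the paper appears to carry a sign slip), and the vanishing of $\cconst{F}$, being $3$-torsion information, cannot "sort out" a sign, so that parenthetical remark does not repair anything. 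To make part (3) rigorous you should either cite \cite{hut:rbl11} Lemma 4.1 as the paper does, or carry the proposed five-term computation through explicitly and check that it yields an odd multiple of $\bconst{F}$ inside $\ks{1}{F}+\cconstmod{F}$.
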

\begin{proof}
\begin{enumerate}
\item The first equality follows from $\suss{1}{x}=0$, the second from $\suss{2}{x}=0$.
\item From $\an{x}\gpb{x}=\an{-1}\gpb{x}$ it follows that $\an{y}\gpb{x}=\an{-x}\gpb{x}=\gpb{x}$.
\item Since $\bconst{F}\equiv-\cconst{F}\pmod{\ks{1}{F}}$ (\cite{hut:rbl11}, Lemma 4.1) 
it follows that $\bconst{F}=0$ in $\rrpb{F}$, and thus we have (from the definition of 
$\bconst{F}$) that $0=\gpb{x}+\an{-1}\gpb{1-x}$ in $\rrpb{F}$.
\end{enumerate}
\end{proof}

\section{Characters and  $\zhalf{\sgr{F}}$-modules.}\label{sec:char}

Let $G$ be a multiplicative abelian group such that $g^2=1$ for all $g\in G$. 
We will let $\mathcal{R}$ denote the integral group ring $\Z[G]$.

For a character $\chi\in \dual{G}:=\hmz{G}{\mu_2}$, 
let $\upp{\mathcal{R}}{\chi}$ be the ideal of 
$\mathcal{R}$ generated
by the elements $\{ g-\chi(g)\ |\ g\in G\}$. In other words $\upp{\mathcal{R}}{\chi}$ is the 
kernel of the ring 
homomorphism $\rho(\chi):\mathcal{R}\to \Z$ sending $g$ to $\chi(g)$ for any $g\in G$. 
We let $\dwn{\mathcal{R}}{\chi}$ denote the associated $\mathcal{R}$-algebra structure on $\Z$; ie. 
$\dwn{\mathcal{R}}{\chi}:= \mathcal{R}/\upp{\mathcal{R}}{\chi}$. 

If $M$ is an $\mathcal{R}$-module, we let $\upp{M}{\chi}=
\upp{\mathcal{R}}{\chi}M $ and we let 
\[
\dwn{M}{\chi}:=M/\upp{M}{\chi}=
\left(\mathcal{R}/\upp{\mathcal{R}}{\chi}\right)\otimes_{\mathcal{R}} M
=\dwn{(\mathcal{R})}{\chi}\otimes_{\mathcal{R}}  M.
\]

Thus $\dwn{M}{\chi}$ is the largest quotient module of $M$ with the property that 
$g\cdot m =\chi(g)\cdot m$ for all $g\in G$.

In particular, if $\chi=\chi_0$, the trivial character,
 then $\upp{\mathcal{R}}{\chi_0}$ is the augmentation ideal $\aug{G}$, 
$\upp{M}{\chi_0}=\aug{G}M$  and $\dwn{M}{\chi_0}=M_{G}$.

More generally, for any finite subset $H\subset \dual{G}$ and for any $\mathcal{R}$-module 
$M$ we define
\[
\upp{M}{H}:= \bigcap_{\chi\in H}\upp{M}{\chi},\quad \dwn{M}{H}:= M/\upp{M}{H}. 
\]
\begin{prop}\label{prop:upph}
 Let $M$ be a $\zhalf{\mathcal{R}}$-module and let $H$ be a finite subset of $\dual{G}$. 

Then $\upp{M}{H}= \left(\upp{\mathcal{R}}{H}\right)M$ 
and there is a natural isomorphism of $\mathcal{R}$-modules of 
\[
\dwn{M}{H}\cong \oplus_{\chi\in H}\left(\dwn{M}{\chi}\right). 
\]
\end{prop}

\begin{proof}
We begin by observing that if $\chi_1\not=\chi_2$ in $\dual{G}$ then there exists $g\in G$ with\\  
$\chi_1(g)=-\chi_2(g)$. Thus it follows that 
\[
2= (1-\chi_1(g)g)+(1-\chi_2(g)g)\in \upp{\mathcal{R}}{\chi_1}+\upp{\mathcal{R}}{\chi_2}
\] 
and therefore  
$\upp{\zhalf{\mathcal{R}}}{\chi_1}+\upp{\zhalf{\mathcal{R}}}{\chi_2}= \zhalf{\mathcal{R}}$.

By the Chinese Remainder Theorem, it follows that 
\[
\upp{\mathcal{R}}{H}:=\cap_{\chi\in H} \upp{\mathcal{R}}{\chi}= 
\prod_{\chi\in H}\upp{\mathcal{R}}{\chi}
\]
 and that the maps
 $\rho(\chi):\zhalf{\mathcal{R}}\to \dwn{\zhalf{\mathcal{R}}}{\chi}$
 induce a short exact sequence of $\zhalf{\mathcal{R}}$-modules 
\[
0\to \upp{\zhalf{\mathcal{R}}}{H}\to \zhalf{\mathcal{R}}
\to \prod_{\chi\in H}\dwn{\zhalf{\mathcal{R}}}{\chi}\to 0. 
\]

Tensoring with $M$ now gives the  exact sequence
\[
\upp{R}{H}\otimes_R M\to M\to \oplus_{\chi\in H}\left(\dwn{M}{\chi}\right)\to 0.
\]
Since the image of the left-hand map is $\left(\upp{R}{H}\right)M$ and the kernel of 
the second map is\\
 $\bigcap_{\chi\in H}\upp{M}{\chi}=\upp{M}{H}$, 
the statements of the lemma follow. 
\end{proof}

\begin{cor}\label{cor:upph}
 Let $M$ be a $\zhalf{\mathcal{R}}$-module and let $H$ be a finite subset of $\dual{G}$. 
For any $\chi\in \dual{G}$ we have 
\[
\dwn{\left(\upp{M}{H}\right)}{\chi}=
\left\{
\begin{array}{ll}
\dwn{M}{\chi},& \chi\not\in H\\
\{ 0\},& \chi\in H. 
\end{array}
\right.
\]
\end{cor}

\begin{proof}
If $\chi\in H$, then $\upp{(\upp{M}{H})}{\chi}=\upp{M}{H}$ by definition, and the statement follows.

If $\chi\not\in H$, then the statement
 follows by applying the snake lemma to the commutative diagram with 
exact rows
\[
\xymatrix{
0\ar[r] 
&\upp{M}{H\cup \{ \chi\}}\ar[r]\ar[d]
&M\ar[r]\ar^-{=}[d]
&\oplus_{\psi\in H\cup \{ \chi\}}\dwn{M}{\psi}\ar[r]\ar[d]
&0\\
0\ar[r] 
&\upp{M}{H}\ar[r]
&M\ar[r]
&\oplus_{\psi\in H}\dwn{M}{\psi}\ar[r]
&0\\
}
\]
\end{proof}

Given $g\in G$ and $\chi\in \dual{G}$, 
let $\idm{g}{\chi}\in\zhalf{\mathcal{R}}$ denote 
the idempotent 
\[
\frac{1+\chi(g)\an{g}}{2}
\]
so that $g\cdot\idm{g}{\chi}=\chi(g)\idm{g}{\chi}$ for all $g$ and $\chi$. 

More generally, for any finite subset $S$ of $G$ and any $\chi\in\dual{G}$, let
\[
\idm{S}{\chi}:=\prod_{g\in S}\idm{g}{\chi}
=\prod_{g\in S}\left(\frac{1+\chi(g)\an{g}}{2}\right)
\in\zhalf{\mathcal{R}}.
\]

Note that, in practice, we may assume, when required, that $S$ is a \emph{subgroup} of $G$:

\begin{lem} Let $S\subset G$ be a finite set and let $\chi\in \dual{G}$. Let $T$ be the subgroup 
of $G$ generated by $S$. Then $\idm{S}{\chi}=\idm{T}{\chi}$.
\end{lem}
\begin{proof} Let $g\in T$. Then there exist $g_1,\ldots,g_r\in S$ with $g=\prod_ig_i$. 

Now $g_i\cdot\idm{S}{\chi}=\chi(g_i)\idm{S}{\chi}$ for each $i$. It follows that $\an{g}\idm{S}{\chi}
=\chi(g)\idm{S}{\chi}$ and hence that $\idm{S}{\chi}=\idm{g}{\chi}\idm{S}{\chi}$.
\end{proof}

When $G$ itself is finite, we let $\idem{\chi}:=\idm{G}{\chi}$. 

\begin{lem}
Suppose that $|G|<\infty$ and $M$ is an $\zhalf{\mathcal{R}}$-module. For any character $\chi$ the 
composite
\[
\xymatrix{
\idem{\chi}M\ar@{^(->}[r]
&M\ar@{>>}[r]
&\dwn{M}{\chi}}
\] 
is an isomorphism of $\zhalf{\mathcal{R}}$-modules. 
\end{lem}
\begin{proof}
Since $1-\idem{\chi}\in \upp{\left(\zhalf{\mathcal{R}}\right)}{\chi}$ by \ref{lem:upp}, 
while $\idem{\chi}\cdot \upp{\left(\zhalf{\mathcal{R}}\right)}{\chi}=0$,
 it follows that $\upp{\left(\zhalf{\mathcal{R}}\right)}{\chi}=
(1-\idem{\chi})\zhalf{\mathcal{R}}$ and more generally that 
 $\upp{M}{\chi}=(1-\idem{\chi})M$. 

Thus $M = \idem{\chi}M\oplus (1-\idem{\chi})M = 
\idem{\chi}M\oplus\upp{M}{\chi}$ as an $\zhalf{\mathcal{R}}$-module. 
\end{proof}

\begin{lem}\label{lem:upp}
 For any character $\chi$ and finite $S\subset G$, 
$1-\idm{S}{\chi}\in \upp{\left(\zhalf{\mathcal{R}}\right)}{\chi}$.
\end{lem}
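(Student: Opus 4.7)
The plan is to reduce the statement to the case $|S|=1$ and then induct on $|S|$. The key observation is a simple identity valid in $\zhalf{\sgr{F}}$: for a single $\an{a}\in\sq{F}$, using that $\chi(a)^2=1$, one has
\[
1-\idm{a}{\chi}=\frac{1-\chi(a)\an{a}}{2}=-\frac{\chi(a)}{2}\bigl(\an{a}-\chi(a)\bigr).
\]
Since $\an{a}-\chi(a)$ is by definition a generator of $\upp{I}{\chi}$, this shows $1-\idm{a}{\chi}\in\upp{I}{\chi}\cdot\zhalf{\sgr{F}}$.

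For the inductive step, write $S=S'\sqcup\{\an{a}\}$ so that $\idm{S}{\chi}=\idm{S'}{\chi}\cdot\idm{a}{\chi}$. Then the telescoping identity
\[
1-\idm{S}{\chi}=\bigl(1-\idm{S'}{\chi}\bigr)+\idm{S'}{\chi}\bigl(1-\idm{a}{\chi}\bigr)
\]
expresses $1-\idm{S}{\chi}$ as a sum of two elements, each of which lies in $\upp{I}{\chi}$ — the first by the inductive hypothesis, the second by the base case together with the fact that $\upp{I}{\chi}$ is an ideal. This gives the result.

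Alternatively, one can argue more conceptually: the ring homomorphism $\rho(\chi)\colon\sgr{F}\to\Z$ extends to $\zhalf{\rho(\chi)}\colon\zhalf{\sgr{F}}\to\zhalf{\Z}$ whose kernel is exactly $\upp{I}{\chi}\cdot\zhalf{\sgr{F}}$ (since the quotient by the ideal generated by $\{\an{a}-\chi(a)\}$ is already $\Z$, and inverting $2$ commutes with taking this quotient). Because $\rho(\chi)(\idm{a}{\chi})=\frac{1+\chi(a)^2}{2}=1$, multiplicativity gives $\rho(\chi)(\idm{S}{\chi})=1$, so $1-\idm{S}{\chi}$ lies in the kernel. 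Neither approach presents any real obstacle; the only mild subtlety is ensuring that one has correctly identified $\upp{I}{\chi}$ in $\zhalf{\sgr{F}}$ with $\ker(\zhalf{\rho(\chi)})$, which is immediate from flatness of $\zhalf{\Z}$ over $\Z$.
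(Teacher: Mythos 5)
Your proof is correct, and your ``alternative'' argument is exactly the paper's proof: the paper disposes of the lemma in one line by noting that $\rho(\chi)(\idm{a}{\chi})=1$ for every $a$, so that $1-\idm{S}{\chi}$ lies in the kernel of (the extension of) $\rho(\chi)$, i.e.\ in $\upp{I}{\chi}\subset\zhalf{\sgr{F}}$. Your first argument, via the identity $1-\idm{a}{\chi}=-\tfrac{\chi(a)}{2}\bigl(\an{a}-\chi(a)\bigr)$ and the telescoping induction on $|S|$, is just a more explicit rendering of the same fact and is equally valid.
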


\begin{proof} 
This follows from $\rho(\chi)(\idm{a}{\chi})=1$ for any $g\in G$. Thus 
$\rho(\chi)(\idm{S}{\chi})=1$. 
\end{proof}

\begin{cor} \label{cor:upp}
For any $\chi\in\dual{G}$ and $\zhalf{\mathcal{R}}$-module $M$, we have 
\[
\upp{M}{\chi}=\{ m\in M\ |\ \idm{S}{\chi}m=0 \mbox{ for some finite } S\subset G\}.
\]
\end{cor}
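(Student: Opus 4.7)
The plan is to prove the two inclusions separately, treating the right-hand side as a candidate submodule.

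For the easy direction ($\supseteq$), I would invoke Lemma \ref{lem:upp} directly: if $m \in M$ satisfies $\idm{S}{\chi}m = 0$ for some finite $S \subset \sq{F}$, then
\[
m = (1-\idm{S}{\chi})m + \idm{S}{\chi}m = (1-\idm{S}{\chi})m,
\]
and since $1-\idm{S}{\chi} \in \upp{I}{\chi}$, this element lies in $\upp{I}{\chi}M = \upp{M}{\chi}$.

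For the other direction ($\subseteq$), let $N$ denote the set on the right-hand side. First I would check that $N$ is an $\zhalf{\sgr{F}}$-submodule of $M$. Closure under scalar multiplication is immediate from the commutativity of $\zhalf{\sgr{F}}$. For closure under addition, the key point is that the idempotents $\idm{a}{\chi}$ commute and satisfy $\idm{a}{\chi}^2 = \idm{a}{\chi}$, so $\idm{S_1}{\chi}\cdot\idm{S_2}{\chi} = \idm{S_1 \cup S_2}{\chi}$; hence if $\idm{S_i}{\chi}m_i = 0$ for $i=1,2$, then $\idm{S_1 \cup S_2}{\chi}(m_1+m_2) = 0$.

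Next, I would show that $N$ contains the natural generators of $\upp{M}{\chi}$, namely elements of the form $(\an{a} - \chi(a))m'$ with $a \in F^\times$ and $m' \in M$. The key calculation is
\[
\an{a} - \chi(a) = -\chi(a)\bigl(1 - \chi(a)\an{a}\bigr) = -2\chi(a)\,\idm{a}{-\chi},
\]
combined with the orthogonality $\idm{a}{\chi}\,\idm{a}{-\chi} = 0$ (since $\chi(a)^2 = 1 = \an{a}^2$). Taking $S = \{a\}$, this gives $\idm{a}{\chi}(\an{a}-\chi(a))m' = 0$, so the generator lies in $N$. Because $N$ is a submodule and $\upp{M}{\chi} = \upp{I}{\chi}M$ is generated by such elements, we conclude $\upp{M}{\chi} \subseteq N$.

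There is no serious obstacle here; the proof is essentially a packaging of the orthogonal idempotent identities $\idm{a}{\chi} + \idm{a}{-\chi} = 1$ and $\idm{a}{\chi}\idm{a}{-\chi}=0$ together with Lemma \ref{lem:upp}. The only mild subtlety is keeping track of why arbitrary finite $S$ suffice, which is handled by the multiplicativity $\idm{S_1}{\chi}\idm{S_2}{\chi} = \idm{S_1 \cup S_2}{\chi}$ used in verifying that $N$ is closed under addition.
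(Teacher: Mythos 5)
Your proposal is correct and is essentially the paper's own argument: both directions rest on the identity $\an{a}-\chi(a)=-2\chi(a)\,\idm{a}{-\chi}$, the orthogonality $\idm{a}{\chi}\idm{a}{-\chi}=0$, and Lemma \ref{lem:upp}; the paper simply writes a general element of $\upp{M}{\chi}$ as a finite sum $\sum_i\idm{a_i}{-\chi}m_i$ and kills it with $\idm{S}{\chi}$ for $S=\{\an{a_1},\ldots,\an{a_r}\}$, which is the same bookkeeping you package as ``the right-hand side is a submodule containing the generators.''
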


\begin{proof}
Let $m\in \upp{M}{\chi}$. $\upp{\mathcal{R}}{\chi}$ is the ideal of $\zhalf{\mathcal{R}}$ generated by the elements 
\[
1-\idm{g}{\chi}= \frac{1-\chi(g)\an{g}}{2}.
\]
 Thus there exist $g_1,\ldots g_r\in G$ such that 
\[
m=\sum_{i=1}^r(1-\idm{g_i}{\chi})m_i
\]
from which it follows that $\idm{S}{\chi}m=0$ where $S=\{ g_1,\ldots,g_r\}$.

Conversely, suppose that $\idm{S}{\chi}m=0$ for some finite set $S$. Then 
\[
m=(1-\idm{S}{\chi})\cdot m \in \upp{M}{\chi} 
\]
by Lemma \ref{lem:upp}
\end{proof}

\begin{cor}\label{cor:exact}
The functor $M\mapsto \dwn{M}{\chi}$ is an exact functor on the category of 
$\zhalf{\mathcal{R}}$-modules. \end{cor}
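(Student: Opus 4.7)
The plan is to realize $\dwn{M}{\chi}$ as a filtered colimit of direct summands of $M$ cut out by central idempotents of $\zhalf{\sgr{F}}$, so that exactness is automatic from the exactness of filtered colimits of abelian groups.

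First I would verify that for each finite $S\subset\sq{F}$, the element $\idm{S}{\chi}\in\zhalf{\sgr{F}}$ is a central idempotent. A direct computation from $\an{a}^2=1$ gives $\idm{a}{\chi}^2=\idm{a}{\chi}$; these idempotents pairwise commute and hence so does any product. In particular, for $S\subset S'$ one has $\idm{S'}{\chi}=\idm{S}{\chi}\cdot\idm{S'}{\chi}$, so the collection $\{\idm{S}{\chi}\}$, indexed by the finite subsets of $\sq{F}$ ordered by inclusion, forms a directed system of idempotents.

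Because $\idm{S}{\chi}$ is a central idempotent, every $\zhalf{\sgr{F}}$-module $M$ splits as $M=\idm{S}{\chi}M\oplus(1-\idm{S}{\chi})M$, so the functor $M\mapsto\idm{S}{\chi}M$ is exact. I would then identify
\[
\dwn{M}{\chi}\cong\colim_S \idm{S}{\chi}M,
\]
where the transition map for $S\subset S'$ is multiplication by $\idm{S'}{\chi}$ (which restricted to $\idm{S}{\chi}M$ is surjective onto $\idm{S'}{\chi}M$, since $\idm{S'}{\chi}\cdot\idm{S}{\chi}=\idm{S'}{\chi}$). The essential input is Corollary \ref{cor:upp}, which identifies $\upp{M}{\chi}$ with $\bigcup_S(1-\idm{S}{\chi})M$, i.e.\ the union of the kernels of multiplication by the $\idm{S}{\chi}$; from this one sees that the natural map $M\to\colim_S\idm{S}{\chi}M$, $m\mapsto[\idm{S}{\chi}m]$, has kernel precisely $\upp{M}{\chi}$ and is surjective.

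Finally, since filtered colimits of abelian groups are exact and each summand functor $M\mapsto\idm{S}{\chi}M$ is exact, the composite functor $M\mapsto\dwn{M}{\chi}$ is exact. The only substantive step is the colimit identification, and the work needed for it has already been done in Corollary \ref{cor:upp}; no serious obstacle remains.
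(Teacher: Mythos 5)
Your argument is correct, but it takes a genuinely different route from the paper. The paper splits the problem: right-exactness is immediate because $\dwn{M}{\chi}=\dwn{(\sgr{F})}{\chi}\otimes_{\sgr{F}}M$, and left-exactness follows from Corollary \ref{cor:upp} via the observation that for a submodule $N\subset M$ one has $\upp{N}{\chi}=\upp{M}{\chi}\cap N$, so that $N/\upp{N}{\chi}\to M/\upp{M}{\chi}$ is injective. You instead exhibit $\dwn{M}{\chi}$ as the filtered colimit $\colim_S\,\idm{S}{\chi}M$ over finite $S\subset\sq{F}$ (the indexing poset is directed, $\idm{S}{\chi}\idm{S'}{\chi}=\idm{S\cup S'}{\chi}$ makes the transition maps compatible, and Corollary \ref{cor:upp} identifies the kernel of $M\to\colim_S\idm{S}{\chi}M$ with $\upp{M}{\chi}$), and then quote exactness of filtered colimits together with exactness of each summand functor $M\mapsto\idm{S}{\chi}M$. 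Both proofs rest on the same key input, Corollary \ref{cor:upp}; yours proves exactness in one stroke and records the extra structural fact that $\dwn{(-)}{\chi}$ is a filtered colimit of direct-summand functors (which also makes it clear, for instance, that it commutes with arbitrary colimits), while the paper's version is shorter and avoids the colimit bookkeeping. The details you would need to fill in (centrality and idempotence of $\idm{S}{\chi}$, well-definedness and naturality of the map $m\mapsto[\idm{S}{\chi}m]$) all check out, so there is no gap.
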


\begin{proof}
Since this is the functor $\dwn{(\mathcal{R})}{\chi}\otimes_{\mathcal{R}}(-)$ it is right-exact. 

On the other hand, if 
$N$ is an $\zhalf{\mathcal{R}}$-submodule of $M$ then Corollary \ref{cor:upp} implies immediately that 
$\upp{N}{\chi}=\upp{M}{\chi}\cap N$, from which left-exactness follows. 
\end{proof}

\begin{cor}\label{cor:exact1}
The functor $M\mapsto \upp{M}{\chi}$ is an exact functor on the category of 
$\zhalf{\mathcal{R}}$-modules. 

In particular, taking $\chi=\chi_0$, the functor $M\mapsto \aug{F}M$ is an exact functor 
on the category of 
$\zhalf{\mathcal{R}}$-modules.
\end{cor}
\begin{proof} This follows from Corollary \ref{cor:exact}, since for any $\chi\in\dual{G}$, 
 $\upp{M}{\chi}$ is the kernel of the surjective homomorphism $M\to \dwn{M}{\chi}$. 
\end{proof}

\begin{lem} \label{lem:zerodetect}
Let $M$ be a $\zhalf{\mathcal{R}}$-module. If $M\not=\{ 0\}$, there exists $\chi\in \dual{G}$ such that 
$\dwn{M}{\chi}\not=0$.
\end{lem}

\begin{proof} 

Since the functor $N\to \dwn{N}{\chi}$ is exact, it is enough to prove that $\dwn{N}{\chi}\not=0$ 
for some character $\chi$ and some submodule $N$ of $M$. 

Let $N$ be a non-zero cyclic submodule of $M$. So $N\cong \zhalf{\mathcal{R}}/J$ 
for some proper ideal 
$J$ of $\zhalf{\mathcal{R}}$. 

Let $\mathcal{M}$ be a maximal ideal of $\zhalf{\mathcal{R}}$ containing $J$. 
Let $k$ be the field 
$\zhalf{\mathcal{R}}/\mathcal{M}$ and let $\pi:\zhalf{\mathcal{R}}\to k$ 
be the canonical homomorphism. 

Let $\chi:G\to \mu_2$ be the character defined by $\chi(g)=\pi(g)\in \mu_2(k)=\mu_2$. 
Then $\upp{\zhalf{\mathcal{R}}}{\chi}\subset \mathcal{M}$. Hence $\dwn{k}{\chi}=k\not=\{0\}$. 

Since there is a surjective $\zhalf{\mathcal{R}}$-module homomorphism $N\to k$ it follows, 
by exactness, that 
$\dwn{N}{\chi}\not= \{ 0\}$.  
\end{proof}

Combining Corollary \ref{cor:exact} and Lemma \ref{lem:zerodetect} we deduce the following 
\emph{character-theoretic local-global principle}:

\begin{prop}\label{prop:localglobal} Let $f:M\to N$ be a homomorphism of 
$\zhalf{\mathcal{R}}$-modules. 

For any $\chi\in\dual{G}$, let $f_\chi$ be the induced homomorphism 
$\dwn{M}{\chi}\to\dwn{N}{\chi}$.

\begin{enumerate}
\item $f$ is injective if and only if $f_\chi$ is injective for all $\chi\in \dual{G}$.
\item $f$ is surjective if and only if $f_\chi$ is surjective for all $\chi\in \dual{G}$.
\item $f$ is an isomorphism if and only if $f_\chi$ is an isomorphism for all $\chi\in \dual{G}$.
\end{enumerate}
\end{prop}

\begin{proof} From exactness of the functor $M\mapsto \dwn{M}{\chi}$ it follows that
$
\ker{f_\chi}\cong \dwn{\ker{f}}{\chi}
$
and 
$
\coker{f_\chi}\cong \dwn{\coker{f}}{\chi}
$
for all $\chi$. Combining this with  Lemma \ref{lem:zerodetect} we deduce:

$\ker{f}=0$ if and only if $\ker{f_\chi}=0$ for all $\chi$ and 
$\coker{f}=0$ if and only if $\coker{f_\chi}=0$ for all $\chi$.   
\end{proof}

We will apply this principle several times below to the particular case $G=\sq{F}$ and 
$\mathcal{R}=\sgr{F}$. 

Combining Proposition \ref{prop:localglobal} with Corolllary \ref{cor:upph} we deduce:

\begin{cor}\label{cor:localglobalh}
Let $f:M\to N$ be a homomorphism of 
$\zhalf{\mathcal{R}}$-modules. Let $H$ be a finite subset of $\dual{G}$.

Then $f$ induces an isomorphism $\upp{M}{H}\to\upp{N}{H}$ if and only if 
$f_\chi:\dwn{M}{\chi}\to \dwn{N}{\chi}$ is an isomorphism for all 
$\chi\not\in H$. 
\end{cor}

In particular if we consider the particular case $H=\{ \chi_0\}$, we deduce
\begin{cor}\label{cor:localglobalaug}
Let $f:M\to N$ be a homomorphism of 
$\zhalf{\mathcal{R}}$-modules. 

Then $f$ induces an isomorphism $\aug{G}M\to\aug{G}N$ if and only if 
$f_\chi:\dwn{M}{\chi}\to \dwn{N}{\chi}$ is an isomorphism for all 
$\chi\not=\chi_0$.
\end{cor}

For $g\in G$, we will also consider the orthogonal idempotents
\[
\idm{+}{g}=\frac{1+g}{2},\quad \idm{-}{g}:= \frac{1-g}{2}\quad \mbox{ in } \zhalf{\mathcal{R}}.
\]

Since $\idm{+}{g}\in \upp{\zhalf{\mathcal{R}}}{\chi}$ if $\chi(g)=-1$ and 
$\idm{-}{g}\in \upp{\zhalf{\mathcal{R}}}{\chi}$ if $\chi(g)=1$ we easily deduce: 

\begin{lem} Let $M$ be a $\zhalf{\mathcal{R}}$-module. Let $g\in G$, $\chi\in \dual{G}$. Then
\[
\dwn{\left(\idm{+}{g}M\right)}{\chi}=
\left\{
\begin{array}{ll}
\idm{+}{g}M,& \chi(g)=1\\
0, & \chi(g)=-1
\end{array}
\right.
\quad
\mbox{ and }
\quad
\dwn{\left(\idm{-}{g}M\right)}{\chi}=
\left\{
\begin{array}{ll}
\idm{-}{g}M,& \chi(g)=-1\\
0, & \chi(g)=1
\end{array}
\right.
\]
\end{lem}
\section{The action of $\an{-1}$ on the refined scissors congruence group}
\label{sec:rsc}
In this section we prove (Theorem \ref{thm:rpbker}) 
that the square class $\an{-1}$ acts trivially on 
$\zhalf{\rpbker{F}}$ for any field $F$. We deduce that $\an{-1}$ acts trivially on 
$\ho{3}{\spl{2}{F}}{\zhalf{\Z}}$ and that $\zhalf{\rpbker{F}}$ is isomorphic to 
the $\sgr{F}$-module $\ep{-1}\zhalf{\qrpb{F}}$, and hence also to the 
quotient module $\zhalf{\qrpb{F}}/\epm{-1}\zhalf{\qrpb{F}}$ (Corollary 
\ref{cor:rpbker}). 

For any $\chi\in\dual{\sq{F}}$ and $\an{a}\in \sq{F}$, 
we will let $\dwn{\gpb{a}}{\chi}$ denote the image of $\gpb{a}$ in
$\dwn{\rrpb{F}}{\chi}$. 

\begin{lem}\label{lem:main1}
 Let $\chi\in\dual{\sq{F}}$. Then 
$2\dwn{\gpb{a}}{\chi}=0$ in $\dwn{\rrpb{F}}{\chi}$ if $\chi(\an{-a})=-1$.
\end{lem}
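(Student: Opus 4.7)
The plan is to apply Lemma \ref{lem:pb-}(2) in the quotient $\rrpb{F}$ and then use the defining property of the functor $M \mapsto \dwn{M}{\chi}$.

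First I would observe that $\rrpb{F}$ is a quotient of $\rrrpb{F}$: indeed $\icconstmod{F} = \aug{F}\cconstmod{F} \subset \cconstmod{F}$, so $\ks{1}{F} + \icconstmod{F} \subset \ks{1}{F} + \cconstmod{F}$, and hence the statement of Lemma \ref{lem:pb-}(2) (proved there for $\rrrpb{F}$) descends: for any $x \in F^\times$ and any $y$ with $\an{y} = \an{-x}$, we have $\pf{y}\gpb{x} = 0$ in $\rrpb{F}$. Specializing to $x = a$ and $y = -a$, this gives
\[
\an{-a}\gpb{a} = \gpb{a} \quad \text{in } \rrpb{F}.
\]

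Next I would pass to the quotient $\dwn{\rrpb{F}}{\chi}$. By definition of $\upp{I}{\chi}$, the element $\an{-a} - \chi(\an{-a})$ lies in $\upp{I}{\chi}$, and hence acts as zero on $\dwn{\rrpb{F}}{\chi}$. Thus in $\dwn{\rrpb{F}}{\chi}$ we have both $\an{-a}\gpb{a}_\chi = \gpb{a}_\chi$ (from the identity in $\rrpb{F}$) and $\an{-a}\gpb{a}_\chi = \chi(\an{-a})\gpb{a}_\chi$. When $\chi(\an{-a}) = -1$, combining these yields
\[
\gpb{a}_\chi = -\gpb{a}_\chi,
\]
so that $2\gpb{a}_\chi = 0$ as required.

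The argument is essentially a one-line consequence of Lemma \ref{lem:pb-}(2) once the observation about $\rrpb{F}$ being a quotient of $\rrrpb{F}$ is made, so there is no real obstacle; the only point requiring a moment's care is verifying that the relation from part (2) of Lemma \ref{lem:pb-} survives the further quotient by $\cconstmod{F}/\icconstmod{F}$, which it does trivially.
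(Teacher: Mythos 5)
Your proof is correct and follows essentially the same route as the paper: both use the relation $\an{-a}\gpb{a}=\gpb{a}$ in $\rrpb{F}$ coming from Lemma \ref{lem:pb-}(2) together with the fact that $\an{-a}-\chi(\an{-a})=\an{-a}+1$ annihilates $\dwn{\rrpb{F}}{\chi}$, giving $2\gpb{a}_\chi=0$. Your extra remark that the relation descends from $\rrrpb{F}$ to the further quotient $\rrpb{F}$ is a harmless (and correct) elaboration of a step the paper takes for granted.
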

\begin{proof} By definition of $\dwn{M}{\chi}$, $(\an{-a}+1)x=0$ 
for all $x\in \dwn{\rrpb{F}}{\chi}$. 

On the other 
hand, $(\an{-a}-1)\gpb{a}=0$ in $\rrpb{F}$ by Lemma \ref{lem:pb-}. 

Thus $0=(1+\an{-a})\dwn{\gpb{a}}{\chi}+(1-\an{-a})\dwn{\gpb{a}}{\chi}=
2\dwn{\gpb{a}}{\chi}$.
\end{proof}

\begin{lem}\label{lem:ichi}
 Let $\chi\in \dual{\sq{F}}$. 

Let $a_1,\ldots, a_n\in F^\times$,  not necessarily distinct, 
satisfy $\chi(a_i)=-1$ for $i\leq n$. Then $\dwn{\zhalf{\aug{F}}}{\chi}$ 
is a free $\zhalf{\Z}$-module 
with generator $\prod_{i=1}^n\idm{-}{a_i}$.
\end{lem}

\begin{proof} Since $\chi\not=\chi_0$ by hypothesis, applying the exact functor $M\to \dwn{M}{\chi}$ 
to the sequence of $\zhalf{\sgr{F}}$-modules
\[
0\to \zhalf{\aug{F}}\to \zhalf{\sgr{F}}\to \zhalf{\Z}\to 0
\]
shows that the inclusion map $\aug{F}\to \sgr{F}$ induces an isomorphism 
$\dwn{\zhalf{\aug{F}}}{\chi}\cong \dwn{\zhalf{\sgr{F}}}{\chi}$. 

Of course, the homomorphism 
$\rho(\chi)$ induces a ring isomorphism  $\dwn{\zhalf{\sgr{F}}}{\chi}\cong \zhalf{\Z}$. 

Since $\idm{-}{a_i}\in \aug{F}$ and since $\rho(\chi)(\idm{-}{a_i})=1$ for all $i$, the 
statement of the Lemma follows. 
\end{proof}

Since $\aug{F}/\aug{F}^2\cong \sq{F}$, it follows that $\zhalf{\aug{F}^2}=\zhalf{\aug{F}}$. 

Furthermore, for any $\zhalf{\sgr{F}}$-module $M$, we have $\pp{-1}M=\ep{-1}M$ and hence 
\[
\frac{M}{\pp{-1}M}=\frac{M}{\ep{-1}M}\cong \epm{-1}M.
\]

Thus, tensoring the homomorphism
\[
\bar{\lambda}_1:\rrpb{F}\to \frac{\aug{F}^2}{\pp{-1}\aug{F}}
\]
with $\zhalf{\Z}$ gives a homomorphism 
\[
\bar{\lambda}_1:\zhalf{\rrpb{F}}\to \zhalf{\epm{-1}\aug{F}}
\]
whose kernel is $\zhalf{\rrpbker{F}}$.  In particular, it follows that 
\[
\ep{-1}\zhalf{\rrpb{F}}\subset \zhalf{\rrpbker{F}}. 
\] 

In fact we have: 
\begin{thm}\label{thm:rpbker}
For any field $F$
\[
\zhalf{\rrpbker{F}}= \ep{-1}\zhalf{\rrpb{F}}.
\]
\end{thm}

\begin{proof} We must prove that 
$
\ker{\bar{\lambda}_1}\cap \epm{-1}\zhalf{\rrpb{F}}=\{ 0\}
$. 
That is, we must prove that the induced map 
\[
\bar{\lambda}_1:\epm{-1}\zhalf{\rrpb{F}}\to \epm{-1}\aug{F}
\]
is injective. 

By the local-global principle, this is equivalent to the statement that the homomorphism
\[
\dwn{(\bar{\lambda}_1)}{\chi}:\dwn{\zhalf{\rrpb{F}}}{\chi}\to \dwn{\zhalf{\aug{F}}}{\chi}
\]
is injective for all characters $\chi$ satisfying $\chi(-1)=-1$.  

Let $\chi\in\dual{\sq{F}}$ satisfy $\chi(-1)=-1$. 

By Lemma \ref{lem:main1}, whenever $a\in F^\times$ with $\chi(a)=1$ then $\dwn{\gpb{a}}{\chi}=0$ 
in $\zhalf{\rrpb{F}}$. Since 
\[
\dwn{\gpb{a}}{\chi}=\dwn{\gpb{1-a}}{\chi}\mbox{ in } \dwn{\zhalf{\rrpb{F}}}{\chi}
\]
it follows that 
$
\dwn{\gpb{a}}{\chi}=0 \mbox{ whenever } \chi(a)=1 \mbox{ or } \chi(1-a)=1.
$

Thus, if it is the case that $\chi(a)=1$ or $\chi(1-a)=1$ for all $a\in F^\times\setminus\{ 1\}$ 
then 
$\dwn{\zhalf{\rrpb{F}}}{\chi}=0$ and there is nothing more to prove.

Otherwise, these exists $a\in F^\times$ such that 
\[
\chi(a)=-1=\chi(1-a).
\]

Suppose now that $a\not = b\in F^\times\setminus\{ 1\}$ satisfy 
$
\dwn{\gpb{a}}{\chi}\not=0,\  \dwn{\gpb{b}}{\chi}\not= 0 \mbox{ in } 
\dwn{\zhalf{\rrpb{F}}}{\chi}.
$

Then necessarily $\chi(a)=\chi(1-a)=\chi(b)=\chi(1-b)=-1$. 

Then in $\dwn{\zhalf{\rrpb{F}}}{\chi}$ we have 
\begin{eqnarray}\label{eq:dwn}
0= \dwn{\left( S_{a,b}\right)}{\chi} = 
\dwn{\gpb{a}}{\chi}-\dwn{\gpb{b}}{\chi}-\dwn{\gpb{\frac{b}{a}}}{\chi}
-\dwn{\gpb{\frac{1-a^{-1}}{1-b^{-1}}}}{\chi}-\dwn{\gpb{\frac{1-a}{1-b}}}{\chi}.
\end{eqnarray}

However 
\[
\chi\left(\frac{b}{a}\right)=1=\chi\left(\frac{1-a}{1-b}\right)
\]
and hence also 
\[
\chi\left(\frac{1-a^{-1}}{1-b^{-1}}\right)=\chi\left(\frac{b}{a}\cdot \frac{1-a}{1-b}\right)=1.
\]

So the last three terms in (\ref{eq:dwn}) are all $0$ by Lemma \ref{lem:main1}. 

Thus, for any $a,b\in F^\times$ such that $\dwn{\gpb{a}}{\chi}\not=0$ and 
$\dwn{\gpb{b}}{\chi}\not=0$ we have 
$\dwn{\gpb{a}}{\chi}=\dwn{\gpb{b}}{\chi}$ in $\dwn{\zhalf{\rrpb{F}}}{\chi}$. 

Thus, if there exists $a\in F^\times$ such that $\dwn{\gpb{a}}{\chi}\not=0$ then 
$\dwn{\zhalf{\rrpb{F}}}{\chi}$ is a cyclic $\zhalf{\Z}$-module with generator 
$\dwn{\gpb{a}}{\chi}$.

In this case, 
\[
\dwn{(\bar{\lambda}_1)}{\chi}\left( \dwn{\gpb{a}}{\chi}\right) = 4\epm{a}\epm{1-a}
\]
is a generator of the free $\zhalf{\Z}$-module $\dwn{\zhalf{\aug{F}}}{\chi}$ by Lemma 
\ref{lem:ichi}. It follows that the map $\dwn{(\bar{\lambda}_1)}{\chi}$ is 
an isomorphism, and the Theorem is proved.
\end{proof}

Since $\an{-1}$ acts trivially on $\cconst{F}$ it follows that 
$\cconstmod{F}=\ep{-1}\cconstmod{F}$ and we immediately deduce: 

\begin{cor}\label{cor:qrpbker} For any field $F$ 
\[\
\zhalf{\qrpbker{F}}=\ep{-1}\zhalf{\qrpb{F}}. 
\] 
\end{cor}

\begin{proof}
We begin by noting that if $M$ is a $\zhalf{\sgr{F}}$-module then 
$\an{-1}$ acts trivially on $M$ if and only if 
$M=\ep{-1}M$. 
Furthermore, any module $M$ decomposes as $\ep{-1}M\oplus \epm{-1}M$. It follows that 
functor $M\to \ep{-1}M$ is an exact functor on the category of $\zhalf{\sgr{F}}$-modules. 

The corollary thus follows from the short exact sequence of $\zhalf{\sgr{F}}$-modules
\[
0\to \cconstmod{F}\to \zhalf{\qrpbker{F}}\to \zhalf{\rrpbker{F}}\to 0.
\]
\end{proof}

Since $\ep{-1}\zhalf{\qrpb{F}}$ is a homomorphic image of $\zhalf{\qrpb{F}}$ and since 
$\zhalf{\rpbker{F}}\cong  \zhalf{\qrpbker{F}}$ by Corollary \ref{cor:qrbl}, it follows that 
$\zhalf{\rpbker{F}}$ can be described as an $\sgr{F}$-module by an explicit presentation:

Let $F$ be a field with at least $4$ elements. Let $\rsc{F}$ denote the $\sgr{F}$-module 
with the 
following presentation:

The generators are symbols $\grsc{x}$, $x\in F^\times$. 

These are subject to the following (families of) relations:
\begin{enumerate}
\item $\grsc{1}=0$
\item $ \grsc{x}-\grsc{y}+\an{x}\grsc{ \frac{y}{x}}-\an{x^{-1}-1}\grsc{\frac{1-x^{-1}}{1-y^{-1}}}
+\an{1-x}\grsc{\frac{1-x}{1-y}}=0$, for $x,y\not= 1$
\item $\an{-1}\grsc{x}=\grsc{x}$ for all $x$.
\item $\grsc{x}+\grsc{x^{-1}}=0$  for all $x$
\end{enumerate}

\begin{cor}\label{cor:pres} Let $F$ be a field with at least $4$ elements.
The $\sgr{F}$-module homomorphism
\[
\zhalf{\rsc{F}}\to \zhalf{\rpbker{F}},\quad 
\grsc{x}\mapsto \ep{-1}\gpb{x}+\frac{1}{2}\pf{1-x}\suss{1}{x}
\]
is an isomorphism. 
\end{cor} 
\begin{cor}\label{cor:-1}
For any field $F$, the square class  $\an{-1}$ acts trivially 
on $\ho{3}{\spl{2}{F}}{\zhalf{\Z}}$.  
\end{cor}

\begin{proof} 
By Corollary  \ref{cor:qrpbker}, $\an{-1}$ acts trivially  on  $\zhalf{\rpbker{F}}$, and hence also on the 
submodule $\zhalf{\rbl{F}}$. 

The action of $\sq{F}$ on the module  $\Tor{\mu_F}{\mu_F}$ is trivial. In particular 
$\an{-1}$ acts trivially on this module. The Corollary now follows from the short exact 
sequence 
\[
0\to \zhalf{\Tor{\mu_F}{\mu_F}}\to\ho{3}{\spl{2}{F}}{{\zhalf{\Z}}}\to\zhalf{{\rbl{F}}}\to 0
\]
of Theorem \ref{thm:main}. 
\end{proof}

\begin{rem} On the other hand, the square class $\an{-1}$ does not generally act trivially on 
$\hoz{2}{\spl{2}{F}}$. There is a natural isomorphism of $\sgr{F}$-modules 
$\hoz{2}{\spl{2}{F}}\cong \mwk{2}{F}$, where $\mwk{2}{F}$ is the second Milnor-Witt $K$-group of 
the field $F$ (see \cite[Appendix]{sus:tors}). 
Furthermore, there is  a natural surjection of $\sgr{F}$-modules $\mwk{2}{F}\to I^2(F)$ where 
$I^2(F)$ is the square of the fundamental ideal of 
the Witt ring $W(F)$ of $F$. 

For example, $I^2(\R)$ is the free $\Z$-module generated by $2\pf{-1}$  and $\an{-1}$ acts as 
multiplication by $-1$. Thus $\an{-1}$ acts non-trivally on $\hoz{2}{\spl{2}{\R}}$ and on 
$\ho{2}{\spl{2}{\R}}{\zhalf{\Z}}$. 
\end{rem}
\section{Valuations}\label{sec:val}
\subsection{General (Krull) valuations} 
Given a field $F$ and a totally ordered (additive) abelian group $\Gamma$, 
a valuation on $F$ with values 
in $\Gamma$ is a  group homomorphism
$v:F^\times\to \Gamma$ such that $v(a+b)\geq \mathrm{min}\{ v(a), v(b)\}$ whenever 
$a,b,a+b\in F^\times$. We also set $v(0):=\infty$ where $\gamma<\infty$ for all $\gamma\in \Gamma$.
By replacing $\Gamma$ with $v(F^\times)$ if ncessary, 
We may assume without loss that the homomorphism $v$ is surjective.

Given a valuation $v$, the corresponding valuation ring $O_v:=\{ a\in F\ |\ v(a)\geq 0\}$ is a local 
ring with maximal ideal $\mathcal{M}_v:= \{ a\in F \ |\ v(a)>0\}$ and group of 
units $U=U_v=\{ a\in F\ |\ v(a)=0\ \}$. We let $k=k_v$ be the associated residue field 
$O_v/\mathcal{M}_v$. The canonical ring homomorphism $O_v\to k$ induces a surjective group 
homomorphism $U_v\to k^\times$ whose kernel we denote $U_1=U_{1,v}$.  

Since $\Gamma$ is torsion-free, 
the short exact sequence of groups $1\to U\to F^\times\to \Gamma \to 0$ yields 
a (noncanonically split) short exact sequence 
\[
1\to U/U^2\to \sq{F}\to \Gamma/2\to 0.  
\]

Since $U_1\cap U^2=U_1^2$ there is also a (split) short exact sequence
\[
1\to U_1/U_1^2\to U/U^2\to \sq{k}\to 1.
\]
In particular, there is a natural surjective ring homomorphism $\Z[U/U^2]\to \sgr{k}$ 
which is an isomorphism when $U_1=U_1^2$. 

Given any $\sgr{k}$-module $M$, 
we define the induced $\sgr{F}$-module
\[
\indf{k}{F}{M}:=\sgr{F}\otimes_{\Z[U/U^2]}M.
\]


We recall (see  Appendix \ref{sec:appendix} below) 
that there is a natural surjective specialization or residue
homomorphism
\begin{eqnarray*}
S:=S_{v}:\rpb{F}&\to&\indf{k}{F}{\qrpb{k}}\\
\gpb{a}&\mapsto&\left\{
\begin{array}{rc}
1\otimes \gpb{\bar{a}},& v(a)=0\\
1\otimes\bconst{k},& v(a)>0\\
-(1\otimes\bconst{k}),& v(a)<0\\
\end{array}
\right.
\end{eqnarray*}

We will also let $\spec{v}$ denote the induced 
(surjective) specialization homomorphism on reduced scissors congruence groups
\[
\spec{v}:\rrpb{F}\to \indf{k}{F}{\rrpb{k}},\quad \gpb{a}\mapsto
\left\{
\begin{array}{ll}
1\otimes \gpb{\bar{a}},& v(a)=0\\
0,&v(a)\not=0
\end{array}
\right.
\]

We let $\kv{v}$ denote the $\sgr{F}$-submodule of $\rrpb{F}$ generated 
by set $\{ \gpb{a}\in \rrpb{F}\ |\ v(a)\not= 0\}$.

\begin{lem}\label{lem:lfv} There is a short exact sequence of $\sgr{F}$-modules
\[
\xymatrix{
0\ar[r]
&
\kv{v}\ar[r]
&
\rrpb{F}\ar[r]^-{\spec{v}}
&
\indf{k}{F}{\rrpb{k}}\ar[r]
&
0}
\]
\end{lem}
\begin{proof}
Clearly $\kv{v}\subset\ker{\spec{v}}$ and hence $\spec{v}$ induces a surjective homomorphism 
\[
\rrpb{F}_v:=\frac{\rrpb{F}}{\kv{v}}\to \indf{k}{F}{\rrpb{k}}.
\]

By definition, if $a\in F^\times$ with $v(a)\not=0$, then $\gpb{a}=0$ in $\rrpb{F}_v$. 
Thus if $u\in U_1$, then $\gpb{u}=-\an{-1}\gpb{1-u}=0$ in $\rrpb{F}_v$ since $v(1-u)>0$.  

Now suppose that $a\in U\setminus U_1$ and $u\in U_1$. Then 
\[
0=\gpb{a}-\gpb{au}+\an{a}\gpb{u}-\an{a^{-1}-1}\gpb{u\cdot\frac{1-a}{1-au}}+
\an{1-a}\gpb{\frac{1-a}{1-au}}
=\gpb{a}-\gpb{au} \mbox{ in }\rrpb{F}_v
\] 
since $1-a\equiv 1-au\pmod{U_1}$. Thus for $a\in U$ the element $\gpb{a}$ in $\rrpb{F}_v$ 
depends only on image, 
$\bar{a}$, of $a$ in $k^\times$.

Furthermore, the action of $\igr{U/U^2}$ on $\rrpb{F}_v$ descends to an 
$\sgr{k}$-module structure: 

Let $a\in U$ 
and $u\in U_1$. Then $\an{-a}\gpb{a}=\gpb{a}$ by Lemma \ref{lem:pb-}. 
However, $\gpb{a}=\gpb{au}$ in $\rrpb{F}_v$.
Thus, we also have $\an{-au}\gpb{a}=\gpb{a}$ in $\rrpb{F}_v$, from which it follows that 
\[
\an{u}\gpb{a}=\an{-a}\gpb{a}=\gpb{a}\mbox{ in }\rrpb{F}_v.
\]

It therefore follows that there is a well-defined $\sgr{k}$-module homomorphism 
\[
T_v:\rrpb{k}\to\rrpb{F}_v,\quad \gpb{\bar{a}}\mapsto \gpb{a}
\]
which extends to an $\sgr{F}$-module homomorphism
\[
T_{v,F}:\indf{k}{F}{\rrpb{k}}\to \rrpb{F}_v.
\]

Since $T_{v,F}$ is surjective by the work above, and since clearly $S_v\circ T_{v,F}=
\id{\indf{k}{F}{\rrpb{k}}}$, it follows 
that $S_v:\rrpb{F}_v\to\indf{k}{F}{\rrpb{k}}$ is an isomorphism with inverse $T_{v,F}$.
\end{proof}

\begin{cor}\label{cor:sv} Let $v:F^\times \to \Gamma$ be a valuation on the field $F$ with 
residue field $k$.

There is a natural short exact sequence 
\[
\xymatrix{
0\ar[r]
&\ep{-1}\zhalf{\kv{v}}\ar[r]
&\zhalf{\rrpbker{F}}\ar^-{\spec{v}}[r]
&\indf{k}{F}{\zhalf{\rrpbker{k}}}\ar[r] 
&0.
}
\]
\end{cor}

\begin{proof} If $M$ is any $\zhalf{\sgr{k}}$-module, then since 
$\ep{-1}\in\zhalf{\Z}[U_v]$ we have 
$\ep{-1}(\indf{k}{F}{M})= \indf{k}{F}{(\ep{-1}M)}$. 

By Theorem \ref{thm:rpbker}, taking the $\ep{-1}$-components 
of the terms in the exact sequence of Lemma \ref{lem:lfv} gives the result. 
\end{proof}

\subsection{Discrete valuations}
Given a  valuation $v:F^\times\to \Gamma$ if we choose a splitting $s:\Gamma/2\to \sq{F}$
we obtain a decomposition 
\[
\sgr{F}=\Z[\sq{F}] \cong \Z[(U/U^2)\times (\Gamma/2)] =\Z[U/U^2][\Gamma/2].
\]
Thus if $M$ is any $\sgr{k}$-module we have an isomorphism of $\sgr{k}$-modules
\[
\indf{k}{F}{M}\cong M\otimes_{\Z}\Z[\Gamma/2]\cong \oplus_{\gamma\in \Gamma/2}M.
\]

In particular, if $v:F^\times \to \Z$ is a discrete valuation, a splitting 
$s:\Z/2\to \sq{F}$ amounts to the choice of (the square class of) a uniformizer $\pi$. 
For an $\sgr{k}$-module $M$ this gives a split short exact sequence of 
$\sgr{k}$-modules 
\[
\xymatrix{
0\ar[r]&
M\ar[r]^-{i_0}
&\indf{k}{F}{M}\ar^-{\rho_\pi}[r]
&M\ar[r]
&0\\}
\]
where $i_0(m)=1\otimes m$ and
\[
\rho_\pi(\an{u\pi^s}\otimes m)=
\left\{
\begin{array}{ll}
\an{\bar{u}}m, & s\mbox{ odd }\\
0,& s\mbox{ even }. 
\end{array}
\right.
\]
If we choose the right inverse
\[
s_\pi:M\to\indf{k}{F}{M},\quad m\mapsto \pf{\pi}\otimes m
\]
this yields a left inverse $\rho_0:= i_0^{-1}\circ(\id{}-s_\pi\rho_\pi)$ 
of $i_0$ given by the formula
\[
\rho_0:\indf{k}{F}{M}\to M, \quad \an{u\pi^s}\otimes m\mapsto \an{\bar{u}}m
\]
(for \emph{all} $s\in \Z$, $u\in U$). 

It follows that $\rho_0(\pf{u\pi^s}\otimes m)= \pf{\bar{u}}m$ for all $u\in U$, $s\in \Z$, 
and hence there is a short exact sequence of $\sgr{k}$-modules
\[
\xymatrix{
0\ar[r]&
\aug{k}M\ar[r]^-{i_0}
&\aug{F}\left(\indf{k}{F}{M}\right)\ar^-{\rho_\pi}[r]
&M\ar[r]
&0\\}
\] 
which is split by $\rho_0$. We deduce:

\begin{lem}\label{lem:ifind}
Let $v:F^\times\to \Z$ be a discrete valuation on $F$ with residue field $k$. Let 
$M$ be an $\sgr{k}$-module. Then there is an isomorphism of $\sgr{k}$-modules
\[
\xymatrix{
\aug{F}\indf{k}{F}{M}\ar[r]^-{(\rho_0,\rho_\pi)}& \aug{k}M\oplus M.\\
} 
\]
\end{lem}

Recall that for any field $F$ the natural map $\hoz{3}{\spl{2}{F}}\to\rpbker{F}$ 
induces an isomorphism 
\[
\hot{F}{\zhalf{\Z}}\cong \aug{F}\ho{3}{\spl{2}{F}}{\zhalf{\Z}}\cong
\aug{F}\zhalf{\rpbker{F}}.
\]
Thus, if $v:F^\times\to \Z$ is a discrete valuation with residue field $k$ and if 
$\pi$ is a choice of uniformizer we obtain a sequence of 
homomorphisms of $\Z[U/U^2]$-modules 
\[
\xymatrix{
\hot{F}{\zhalf{\Z}}\ar[r]^-{\cong}
&\aug{F}\zhalf{\rpbker{F}}\ar[r]^-{\spec{v}}
&\aug{F}\indf{k}{F}{\left(\zhalf{\rpbker{k}}\right)}\\
\ar[r]^-{\cong}
&\aug{k}\zhalf{\rpbker{k}}\oplus\zhalf{\rpbker{k}}\ar[r]^-{\cong}
&\hot{k}{\zhalf{\Z}}\oplus \zhalf{\rpbker{k}}
}
\]
which thus induce natural surjective homorphisms of $\Z[U/U^2]$-modules
\[
\delta_0:\hot{F}{\zhalf{\Z}}\to \hot{k}{\zhalf{\Z}}
\]
 and 
\[
\delta_{\pi}:\hot{F}{\zhalf{\Z}}\to \zhalf{\rpbker{k}}
\]
both depending on the choice of uniformizer $\pi$. 

The remainder of this article concerns conditions under which the map $\spec{v}$,
 as it occurs here, is an isomorphism.  
\section{The main theorem and applications to $\ho{3}{\spl{2}{F}}{\Z}$}
\label{sec:main}
In this section we state and prove  our main theorem (Theorem \ref{thm:mainsv}) and derive 
applications to the calculation of $\ho{\spl{2}{F}}{\zhalf{\Z}}$ for certain fields $F$ 
with discrete valuation. 

\begin{thm}\label{thm:mainsv}
Let $F$ be a field with discrete valuation $v:F^\times \to \Z$. Suppose that there exists some 
$n\geq 1$ such that $U_n\subset U_1^2$. Then the homomorphism 
\[
\spec{v}:\zhalf{\rrpbker{F}}\to \indf{k}{F}{\zhalf{\rrpbker{k}}}
\]
induces an isomorphism 
\[
\aug{F}\zhalf{\rrpbker{F}}\cong \aug{F}\left(\indf{k}{F}{\zhalf{\rrpbker{k}}}\right).
\]
\end{thm}

\begin{proof}
In view of Corollary \ref{cor:sv} and the local-global principle, we must prove that
\[
\dwn{\zhalf{\kv{v}}}{\chi}=0
\]
whenever $\chi\in \dual{\sq{F}}$ satisfies, $\chi\not=\chi_0$ and $\chi(-1)=1$. 

Given the definition of $\kv{v}$, we must prove that $\gpb{a}_\chi=0$ in 
$\dwn{\zhalf{\rrpb{F}}}{\chi}$ for all such $\chi$ whenever $v(a)\not=0$. Since 
$\sgpb{a^{-1}}_\chi=-\gpb{a}_\chi$ in $\dwn{\zhalf{\rrpb{F}}}{\chi}$, 
it is enough to prove that $\gpb{a}_\chi=0$ for any such $\chi$ whenever $v(a)>0$. 

Suppose now that $\chi\not=\chi_0$ and $\chi(-1)=1$. We consider two cases:
\begin{enumerate}
\item Suppose first that $\chi(u)=1$ for all $u\in U_v$. 

Since $\chi\not=\chi_0$ and since 
$\Gamma =\Z$ we must have $\chi(\an{a})=(-1)^{v(a)}$ for all $a\in F^\times$. 

Thus, if $a\in F^\times$ with $v(a)$ odd then $\chi(\an{-a})=\chi(\an{a})=-1$ and hence 
$\gpb{a}_\chi=0$ in $\dwn{\zhalf{\rrpb{F}}}{\chi}$ by Lemma \ref{lem:main1}. 

Let $a\in F^\times$ with $v(a)=2k$, $k\geq 1$. Choose $\pi\in F^\times$ with $v(\pi)=1$. 
Thus there exists $u\in U_v$ with $a=u\pi^{2k}$. 

Let $x=\pi^{-(2k-1)}$ and let $y=u\pi$. Since $v(x), v(y)$ are both odd we have 
\[
\gpb{x}_\chi=0=\gpb{y}_\chi\mbox{ in } \dwn{\zhalf{\rrpb{F}}}{\chi}.
\] 

Now $a= y/x$. Thus in $\dwn{\zhalf{\rrpb{F}}}{\chi}$ we have 
\begin{eqnarray*}
0=\left( S_{x,y}\right)_\chi &= &\gpb{x}_\chi-\gpb{y}_\chi-\gpb{a}_\chi
-\gpb{\frac{1-x^{-1}}{1-y^{-1}}}_\chi-\gpb{\frac{1-x}{1-y}}_\chi\\
&=& -\left(\gpb{a}_\chi
+\gpb{\frac{1-x^{-1}}{1-y^{-1}}}_\chi+\gpb{\frac{1-x}{1-y}}_\chi\right).\\
\end{eqnarray*}

However, 
\[
\frac{1-x}{1-y}=\frac{1}{\pi^{2k-1}}\frac{\pi^{2k-1}-1}{1-u\pi}\quad \mbox{ and }\quad 
\frac{1-x^{-1}}{1-y^{-1}}= a\cdot \frac{1-x}{1-y}
\]
so that 
\[
v\left(\frac{1-x}{1-y}\right) \quad \mbox{ and } \quad v\left( \frac{1-x^{-1}}{1-y^{-1}}\right)
\]
are both odd and hence the last two terms in the bracketted expression are both zero.  Thus 
$\gpb{a}_\chi=0$ as required.

\item There exists $u\in U_v$ such that $\chi(u)=-1$.  

Since $U_n\subset U_1^2$ by hypothesis, we have $\chi(u)=1$ whenever $u\in U_n$. 
Let $n_0\geq 1$ be minimal such that $\chi(u)=1$ for all $u\in U_{n_0}$. 

Thus there exists $u\in U_{n_0-1}$ with $\chi(u)=-1$, where, for the purposes of 
 the present argument, we define  $U_0:=U$ if necessary. 

Since 
\[
\chi(1-u^{-1})= \chi(-(1-u)/u)=\chi(-1)\chi(u)\chi(1-u)=-\chi(1-u),
\]   
replacing $u$ by $u^{-1}$ if necessary, we can suppose that 
\[
\chi(u)=-1=\chi(1-u).
\]

Let $p:=1-u$. So $\chi(p)=-1$ and $v(p)\geq n_0-1\geq 0$.

Now let $a\in F^\times$ with $v(a)>0$. If $\chi(a)=-1$ then $\gpb{a}_\chi=0$ by Lemma 
\ref{lem:main1} and there is nothing to prove. So we can suppose that $\chi(a)=1$. 

In   $\dwn{\zhalf{\rrpb{F}}}{\chi}$ we have 
\[
0=\left(S_{p,ap}\right)_\chi = \gpb{p}_\chi-\gpb{ap}_\chi-\gpb{a}_\chi-\gpb{aw}_\chi-\gpb{w}_\chi
\]
where 
\[
w=\frac{1-p}{1-ap}.
\]

Now $\chi(p)=\chi(ap)=-1$ and hence $\gpb{p}_\chi=\gpb{ap}_\chi=0$ by Lemma \ref{lem:main1}. 

Furthermore $v(ap)\geq n_0$ and hence $1-ap\in U_{n_0}$ so that $\chi(1-ap)=1$. On the 
other hand, $\chi(1-p)=\chi(u)=-1$. Thus $\chi(w)=\chi(aw)=1$. Therefore, 
$\gpb{w}_\chi=\gpb{aw}_\chi=0$ by Lemma \ref{lem:main1} and $\gpb{a}_\chi=0$ as required. 
\end{enumerate} 
\end{proof}

\begin{rem}
On the other hand, under the hypotheses of the theorem the full homomorphism
\[
\spec{v}:\zhalf{\rrpbker{F}}\to \indf{k}{F}{\zhalf{\rrpbker{k}}}
\] 
is almost never an isomorphism since 
\[
\dwn{\left(\zhalf{\rrpbker{F}}\right)}{\chi_0}\cong \zhalf{\redpb{F}}\mbox{ while }
\dwn{\left(\indf{k}{F}{\zhalf{\rrpbker{k}}}\right)}{\chi_0}\cong \zhalf{\redpb{k}}. 
\]
\end{rem}

\begin{rem} Let $F$ be a field with discrete valuation $v:F^\times \to \Z$  and 
residue field $k$.
If $F$ is complete with respect to the valuation $v$ and if the residue characteristic is not $2$, 
then in fact we have $U_1=U_1^2$ 
(see, for example, \cite[Chapter I, Corollary 5.5]{fesenko:vostokov}), 
so that the hypothesis of Theorem \ref{thm:mainsv} holds.

Let $F$ be a field of characteristic $0$ with discrete valuation $v:F^\times \to \Z$  and 
finite residue field $k$ of characteristic $2$.
If $F$ is complete with respect to the valuation $v$ then there exists $n\geq 1$ such that 
$U_n\subset U_1^2$. (See, for example, \cite[Chapter II, Proposition 5.7 (i)]{neukirch:ant}.) 
For example, when $F=\Q_2$, $U_1^2=U_3$. 

On the other hand, the hypothesis of the theorem 
fails for the field $F=\laurs{\F{2}}{t}$, where $U_1^2$ has infinite 
index in $U_1$ (\cite[Chapter II, Proposition 5.7 (ii)]{neukirch:ant}). 

However, the theorem is certainly not true without some strong condition on the units:

 For example, 
if we take $F=\Q$ and let $v=v_p$ for some prime $p$, then 
$\aug{\Q}\zhalf{\rrpbker{\Q}}$ is an infinite torsion abelian group while 
$\zhalf{\rrpbker{\F{p}}}=\zhalf{\pb{\F{p}}}$ is a finite cyclic group.  
\end{rem}

Let $F$ be  a field with discrete valuation $v:F^\times\to \Z$ and residue field $k$. 
Let $\pi\in F^\times$ satisfy $v(\pi)=1$. Let $M$ be a $\sgr{k}$-module. 
In Lemma \ref{lem:ind} above, we described a decomposition 
$\indf{k}{F}{M}\cong M\oplus M$ as a $\sgr{k}$-module.  We now describe  decompositions 
of  the modules $\indf{k}{F}{\zhalf{M}}$ and $\aug{F}\left(\indf{k}{F}{\zhalf{M}}\right)$
as $\sgr{F}$-modules. 

 Note that  if $a\in F^\times$ there exists a unique 
$u_\pi(a)\in U$ satisfying $a=u_\pi(a)\pi^{v(a)}$. 
For any ${\Z}[U/U^2]$-module $M$ we will 
write $\pieps{M}{\pi}{1}$ and $\pieps{M}{\pi}{-1}$ for the 
module $M$ equipped with the $\sgr{F}$-module structure 
\[
\an{a}m:= \an{u_\pi(a)}m \quad \mbox{ and }\quad \an{a}m:= (-1)^{v(a)}\an{u_\pi(a)}m  
\]
respectively.

\begin{lem}\label{lem:pieps}
 Let $F$ be  a field with discrete valuation $v:F^\times\to \Z$ and residue field $k$. 
Let $\pi\in F^\times$ satisfy $v(\pi)=1$. 

Let $M$ be a $\zhalf{\Z}[U/U^2]$-module. Let 
$\chi\in\dual{\sq{F}}$. Let $\epsilon\in \{ 1,-1\}$. Then 
\begin{enumerate}
\item 
\[
\dwn{\left(\pieps{M}{\pi}{\epsilon}\right)}{\chi}=
\left\{
\begin{array}{ll}
\pieps{\left(\dwn{M}{\chi|_U}\right)}{\pi}{\epsilon}, & \chi(\pi)=\epsilon\\
0,& \chi(\pi)=-\epsilon.
\end{array}
\right.
\]

\item
\[
\upp{\pieps{M}{\pi}{\epsilon}}{\chi}=
\left\{
\begin{array}{ll}
\pieps{\left(\upp{M}{\chi|_U}\right)}{\pi}{\epsilon}, & \chi(\pi)=\epsilon\\
\pieps{M}{\pi}{\epsilon},& \chi(\pi)=-\epsilon.
\end{array}
\right.
\]

\end{enumerate}
\end{lem}

\begin{proof}\ 

\begin{enumerate}
\item If $\chi(\pi)=-\epsilon$ then the square class $\an{\pi}$ acts as multiplication by $1$ and $-1$ 
on $\dwn{\pieps{M}{\pi}{\epsilon}}{\chi}$, so that this module is zero.

Suppose, on the other hand,  that $\chi(\pi)=\epsilon$. Then the homomorphism of $\Z[U/U^2]$-modules 
$M\to \dwn{M}{\chi|_U}$ induces a homomorhism of $\sgr{F}$-modules 
$\pieps{M}{\pi}{\epsilon}\to \pieps{\left(\dwn{M}{\chi|_U}\right)}{\pi}{\epsilon}$ which thus 
induces a $\sgr{F}$-homomorphism 
\[
\dwn{\pieps{M}{\pi}{\epsilon}}{\chi}\to \pieps{\left(\dwn{M}{\chi|_U}\right)}{\pi}{\epsilon}.
\]

On the other hand, the $\Z[U/U^2]$-homomorphism 
$\dwn{M}{\chi|_U}\to \dwn{\pieps{M}{\pi}{\epsilon}}{\chi}$ gives an inverse to this map.
\item This follows immediately from (1) and the exactness of the sequence 
\[
0\to \upp{\pieps{M}{\pi}{\epsilon}}{\chi}\to 
\pieps{M}{\pi}{\epsilon} \to \dwn{\pieps{M}{\pi}{\epsilon}}{\chi}\to 0.
\]
\end{enumerate}
\end{proof}
\begin{lem}\label{lem:ikmf}
Let $F$ be  a field with discrete valuation $v:F^\times\to \Z$ and residue field $k$.
Let $\pi\in F^\times$ satisfy $v(\pi)=1$.
 
Let $M$ be a $\zhalf{\sgr{k}}$-module. Then there are natural decompositions of 
$\zhalf{\sgr{F}}$-modules
\[
\indf{k}{F}{M}\cong \pieps{M}{\pi}{1}\oplus \pieps{M}{\pi}{-1} 
\]
and 
\[
\aug{F}(\indf{k}{F}{M})\cong \pieps{(\aug{k}{M})}{\pi}{1}\oplus \pieps{M}{\pi}{-1}.
\]

In other words,  $\aug{F}(\indf{k}{F}{M})= \aug{k}M\oplus M$ made into an 
$\sgr{F}$-module by letting  $\an{\pi}$ act as the identity on the first
factor and as multiplication by $-1$ on the second factor.
\end{lem}

\begin{proof}
Since the natural short exact sequence of groups
\[
1\to U/U^2\to \sq{F}\to \Z/2\to 0.  
\]
is split the map $1\mapsto \an{\pi}$, we have a decomposition 
\[
\sq{F}\cong U/U^2\times C_\pi 
\]
where $C_\pi$ is the cyclic group of order $2$ with generator $\an{\pi}$. 
Thus 
\[
\sgr{F}=\Z[\sq{F}]\cong (\Z[U/U^2])[C_\pi].
\]

Hence 
\[
\zhalf{\sgr{F}}\cong \ep{\pi}\zhalf{\Z}[U/U^2]\times \epm{\pi}\zhalf{\Z}[U/U^2] 
\]
as a $\zhalf{\Z}[U/U^2]$-algebra. 

The first statement of the lemma follows at once.

The second statement of the lemma now follows from Lemma \ref{lem:pieps} (2), 
since $\aug{F}N=\upp{N}{\chi_0}$ for any $\sgr{F}$-module $N$.
\end{proof}

Applying this to the statement of Theorem \ref{thm:mainsv}, we deduce
\begin{cor}\label{cor:ikmf}
Let $F$ be a field with discrete valuation $v:F^\times \to \Z$ and residue field $k$.  
Suppose that there exists some 
$n\geq 1$ such that $U_n\subset U_1^2$. Let $\pi\in F^\times$ satisfy 
$v(\pi)=1$.

Then there is an isomorphism of $\zhalf{\sgr{F}}$-modules 
\[
\aug{F}\zhalf{\rrpbker{F}}\cong \aug{k}\zhalf{\rrpbker{k}}\oplus \zhalf{\rrpbker{k}} 
\] 
where $\an{\pi}$ acts as the identity on the first
factor and as multiplication by $-1$ on the second factor. 
\end{cor}

\begin{cor}\label{cor:ifrpf}
Let $F$ be a field with discrete valuation $v:F^\times \to \Z$  and residue field $k$.
  Suppose that there exists some 
$n\geq 1$ such that $U_n\subset U_1^2$. 

Let $\chi_v:\sq{F}\to \mu_2$ be the character $\chi_v(\an{a}):= (-1)^{v(a)}$. Let 
$W=W_v:=\{ \chi_0,\chi_v\}$ be the subgroup of $\dual{\sq{F}}$ generated by $\chi_v$. 

Let $\pi\in F^\times$ satisfy 
$v(\pi)=1$. 

Then there are natural isomorphisms of $\sgr{F}$-modules
\begin{eqnarray*}
\upp{\zhalf{\rrpbker{F}}}{W}&\cong &\left(\aug{k}\zhalf{\rrpbker{k}}\right)_F\\
&\cong & \pieps{\aug{k}\zhalf{\rrpbker{k}}}{\pi}{1}\oplus 
\pieps{\aug{k}\zhalf{\rrpbker{k}}}{\pi}{-1}.\\
\end{eqnarray*}
and
\[
\dwn{\zhalf{\rrpbker{F}}}{\chi_v}\cong \pieps{\left(\zhalf{\redpb{k}}\right)}{\pi}{-1}.
\]
\end{cor}

\begin{proof}
We have 
\begin{eqnarray*}
\upp{\zhalf{\rrpbker{F}}}{W}&=& \upp{\left(\upp{\zhalf{\rrpbker{F}}}{\chi_0}\right)}{\chi_v}
= \upp{\left(\aug{F}\zhalf{\rrpbker{F}}\right)}{\chi_v}\\
&\cong &  \upp{\left(\pieps{\aug{k}\zhalf{\rrpbker{k}}}{\pi}{1}\right)}{\chi_v}
\oplus \upp{\left(\pieps{\zhalf{\rrpbker{k}}}{\pi}{-1}\right)}{\chi_v}
\mbox{ by \ref{cor:ikmf}}\\ 
&\cong & \pieps{\aug{k}\zhalf{\rrpbker{k}}}{\pi}{1}\oplus 
\pieps{\aug{k}\zhalf{\rrpbker{k}}}{\pi}{-1}\mbox{ by \ref{lem:pieps} (2)}\\
&\cong &\indf{k}{F}{\left(\aug{k}\zhalf{\rrpbker{k}}\right)}.\\
\end{eqnarray*}

On the other hand
\begin{eqnarray*}
\dwn{\zhalf{\rrpbker{F}}}{\chi_v}&=& \dwn{\left(\upp{\zhalf{\rrpbker{F}}}{\chi_0}\right)}{\chi_v}
\mbox{ by \ref{cor:upph}}\\
&=& \dwn{\left(\aug{F}\zhalf{\rrpbker{F}}\right)}{\chi_v}\\
&\cong & \dwn{\left(\pieps{\aug{k}\zhalf{\rrpbker{k}}}{\pi}{1}\right)}{\chi_v}\oplus 
\dwn{\left(\pieps{\zhalf{\rrpbker{k}}}{\pi}{-1}\right)}{\chi_v}
\mbox{ by \ref{cor:ikmf}} \\
&\cong & \{ 0\}\oplus \pieps{\left(\dwn{\zhalf{\rrpbker{k}}}{\chi_0}\right)}{\pi}{-1}
\mbox{ by \ref{lem:pieps}(1)}\\
&=& \pieps{\left(\zhalf{\redpb{k}}\right)}{\pi}{-1}
\mbox{ by \ref{lem:qblochinf}}.
\end{eqnarray*} 
\end{proof}

Combining this with Corollary \ref{cor:rpbker3} above and the fact that
$\dwn{\ho{3}{\spl{2}{F}}{\zhalf{\Z}}}{\chi_o}\cong\zhalf{\kind{F}}$ (by Lemma \ref{lem:blochinf} 
(2)), we immediately deduce:

\begin{prop}\label{prop:hot6}
Let $F$ be a field with discrete valuation $v:F^\times \to \Z$  and residue field $k$.
  Suppose that there exists some 
$n\geq 1$ such that $U_n\subset U_1^2$. 

Let $\chi_v:\sq{F}\to \mu_2$ be the character $\chi_v(\an{a}):= (-1)^{v(a)}$. Let 
$W=W_v:=\{ \chi_0,\chi_v\}$ be the subgroup of $\dual{\sq{F}}$ generated by $\chi_v$. 

Let $\pi\in F^\times$ satisfy 
$v(\pi)=1$. 

Then there are natural isomorphisms of $\sgr{F}$-modules
\begin{eqnarray*}
\upp{\ho{3}{\spl{2}{F}}{\znth{\Z}{6}}}{W}&\cong &\indf{k}{F}{\left(\hot{k}{\znth{\Z}{6}}\right)}\\
\end{eqnarray*}
and
\[
\dwn{\ho{3}{\spl{2}{F}}{\znth{\Z}{6}}}{W}\cong \znth{\kind{F}}{6}\oplus 
\znth{\pb{k}}{6}.
\]

Thus there is a natural short exact sequence 
\[
0\to \left(\hot{k}{\znth{\Z}{6}}\right)^{\oplus 2} \to 
\ho{3}{\spl{2}{F}}{\znth{\Z}{6}}\to \znth{\kind{F}}{6}\oplus 
\znth{\pb{k}}{6}\to 0. 
\]

Furthermore, if $X^2-X+1$ has a root in $F$, 
we can replace $\frac{1}{6}$ with $\frac{1}{2}$ in these 
statements. 
\end{prop}

We now generalize Proposition \ref{prop:hot6} to  the following situation:  

Let $n\geq 1$ and let $F=F_0, F_1,\ldots, F_n$ be a sequence of fields such that 
for $i=1,\ldots, n$ there exists a discrete valuation $v_i$ on $F_{i-1}$ with residue field 
$F_i$. We suppose also that each of these valuations satisfies the condition that there exists 
$n_i\geq 1$ with $U_{n_i,v_i}\subset U_{1,v_i}^2$.

\begin{exa} 
In general  if $v_i$ is complete and $\chara{F_{i}}\not=2$ then the 
 stronger condition $U_{1,v_i}^2=U_{1,v_i}$ is 
satisfied. 

Thus for example, we can take any field $F_n$ of characteristic not equal to $2$ and let $F=F_0$ 
be the iterated Laurent series ring 
\[
F= \laurs{\laurs{F_n}{x_n}\cdots}{x_1}.
\]
Here $F_{i-1}= \laurs{\laurs{F_n}{x_n}\cdots}{x_i}$ and $v_i$ is the $x_i$-adic valuation. 

More generally, examples of this situation are provided by higher-dimensional local fields, which 
arise naturally in arithmetic geometry and higher-dimensional local class field theory 
(see, for example, \cite{parshin:lcft} or \cite{kato:classfield}). 
\end{exa}

Given this general set-up, we define inductively subgroups $\tilde{U}_i, i=0,\ldots, n$ of $F^\times$ by
\[
\tilde{U}_0:= F^\times,\quad  \tilde{U}_{i+1}=\{ u\in \tilde{U}_i\ |\ v_{i+1}(\bar{u})=0\},\quad 
 i=0, 1,\ldots, n-1
\]
and we let $\bar{U}_i$ denote the image of the homomorphism $\tilde{U}_i\to \sq{F}$. 
For each $i=1,\ldots, n$, we choose $\pi_i\in \tilde{U}_{i-1}$ satisfying $v_{i}(\overline{\pi_i})=1$. 
Thus 
\[
\bar{U}_{i-1}=\bar{U}_i\times C_{\pi_i}
\]
for $i=1,\ldots, n$. Hence 
\[
\sq{F}=\bar{U}_0\cong \bar{U}_n\times C_{\pi_n}\times \cdots \times C_{\pi_1}. 
\] 

For $i=0,\ldots, n$, we 
let $W_i$ denote the group $\{ \chi\in \dual{\sq{F}}\  |\ \chi|_{\bar{U}_i}=1\}$; ie. 
$W_i={\bar{U}_i}^\perp$.

Given $\chi\in W_n$ we define 
\[
i(\chi):= \mathrm{min}\{ i\leq n\ |\ \chi\in W_i\} = \mathrm{min}\{ i\leq n\ |\ 
\chi|_{\bar{U}_i}=1\}. 
\]

If $\underline{\epsilon}= \epsilon_i,\ldots,\epsilon_1$ is an ordered list of elements of 
$\mu_2$ and if $M$ is a $\Z[\bar{U}_i]$-module, we let $M[\underline{\epsilon}]$ denote 
the $\sgr{F}$-module 
\[
M[\underline{\epsilon}]=M[\epsilon_i,\ldots,\epsilon_1]:=
\pieps{\left((\pieps{M}{\pi_i}{\epsilon_i})\cdots\right)}{\pi_1}{\epsilon_1}.
\]

Furthermore, we let $\chi_{\underline{\epsilon}}$ be the unique character in $W_i$ 
satisfying
\[
\chi_{\underline{\epsilon}}(\pi_j)=\epsilon_j\mbox{ for } 1\leq j\leq i.
\]

Conversely, for $\chi\in W_n$, we define
\[
\underline{\epsilon}(\chi):= \chi(\pi_i),\ldots, \chi(\pi_1)\mbox{ where } i=i(\chi). 
\]

Using induction on $n$, Lemma \ref{lem:pieps} immediately implies:

\begin{lem}\label{lem:piepsn}
Let $i\geq 1$.  Let $M$ be a $\zhalf{\Z}[\bar{U}_i]$-module and let 
$\underline{\epsilon}=\epsilon_i,\ldots,\epsilon_1$ be an ordered list of elements of 
$\mu_2$. 

Then for any $\chi\in\dual{\sq{F}}$  
\begin{enumerate}
\item 
\[
\dwn{\left(M[\underline{\epsilon}]\right)}{\chi}=
\left\{
\begin{array}{ll}
\left(\dwn{M}{\chi|_{\bar{U}_i}}\right)[\underline{\epsilon}], & \chi(\pi_j)=\epsilon_j
\mbox{ for all } j\\
0,& \chi(\pi_j)=-\epsilon_j \mbox{ for some }j.
\end{array}
\right.
\]

\item
\[
\upp{{M}[\underline{\epsilon}]}{\chi}=
\left\{
\begin{array}{ll}
\left(\upp{M}{\chi|_{\bar{U}_i}}\right)[\underline{\epsilon}], & 
\chi(\pi_j)=\epsilon_j\mbox{ for all }j\\
M[\underline{\epsilon}],& \chi(\pi_j)=-\epsilon_j\mbox{ for some }j.
\end{array}
\right.
\]

\end{enumerate}

\end{lem}

If $M$ is a $\Z[\bar{U}_i]$-module for $i\geq 1$, we will let $\indf{F_i}{F}{M}$ denote the 
induced $\sgr{F}$-module
\[
\indf{F_i}{F}{M}=\sgr{F}\otimes_{\Z[\bar{U}_i]}M.
\]

When $M$ is a $\Z[\bar{U}_1]$ this is consistent with our use of this notation above.

Furthermore, for $1\leq j<i$ we have 
\[
\indf{F_i}{F}{M}
=\sgr{F}\otimes_{\Z[\bar{U}_j]}\left(\Z[\bar{U}_j]\otimes_{\Z[\bar{U}_i]}M\right)=
\sgr{F}\otimes_{\Z[\bar{U}_j]}\left(\sgr{F_j}\otimes_{\Z[\bar{U}_i]}M\right)
=\indf{F_j}{F}{\left(\indf{F_i}{F_j}{M}\right)}.
\]

Lemma \ref{lem:ikmf} together with induction on $n$ allows us to deduce:

\begin{lem}\label{lem:ikmfn}
Let $M$ be a $\zhalf{\Z}[\bar{U}_n]$-module. Then there is an isomorphism of 
$\sgr{F}$-modules
\[
\indf{F_n}{F}{M}\cong \oplus_{\underline{\epsilon}\in \mu_2^n}M[\underline{\epsilon}]. 
\]
\end{lem}

Given a finite subset $H$ of $\dual{\sq{F}}$ and a given
 $\underline{\epsilon}=\epsilon_i,\ldots,\epsilon_1\in \mu_2^i$ for 
some $i\leq n$, we will set 
\[
H_{\underline{\epsilon}}=\{ \chi\in H\ |\ \chi(\pi_j)=\epsilon_j \mbox{ for } 1\leq j\leq i\}.
\]

The following Lemma is an immediate consequence of Lemma \ref{lem:pieps} (2):
\begin{lem}\label{lem:heps} 
Let $1\leq i\leq n$ and let $M$ be a $\zhalf{\Z}[\bar{U}_i]$-module. 
Let $\underline{\epsilon}\in \mu_2^i$ and let $H$ be a finite subset of $\dual{\sq{F}}$.

Then 
\[
\upp{M[\underline{\epsilon}]}{H}= 
\upp{M[\underline{\epsilon}]}{H_{\underline{\epsilon}}}.
\]
\end{lem}

We thus obtain the following generalization of Corollary \ref{cor:ifrpf}:

\begin{prop}\label{prop:rpbwn}
Let $W=W_n$. 
We have
\[
\upp{\zhalf{\rrpbker{F}}}{W}\cong \indf{F_n}{F}{\left(\aug{F_n}\zhalf{\rrpbker{F_n}}\right)}
\]
and for any $\chi\in W\setminus\{ \chi_0\}$, there is an isomorphism of 
$\sgr{F}$-modules
\[
\dwn{\zhalf{\rrpbker{F}}}{\chi}\cong \zhalf{\redpb{F_i}}[\underline{\epsilon}(\chi)]
\]
where $i=i(\chi)$.
\end{prop}

\begin{proof} We will use induction on $n$. The case $n=1$ is precisely Corollary 
\ref{cor:ifrpf}. 

Suppose therefore that $n>1$ and the result is established for $n-1$. 

\begin{eqnarray*}
\upp{\zhalf{\rrpbker{F}}}{W_n} &=& \upp{\left(\upp{\zhalf{\rrpbker{F}}}{W_{n-1}}\right)}{W_n}\\
&\cong & \upp{\left(\aug{F_{n-1}}\zhalf{\rrpbker{F_{n-1}}}_{F}\right)}{W_n} \mbox{ by the 
inductive hypothesis}\\
&\cong & \upp{\left(\oplus_{\underline{\epsilon}\in \mu_2^{n-1}}
\zhalf{\rrpbker{F_{n-1}}}[\underline{\epsilon}]\right)}{W_n}\\
&=&\bigoplus_{\underline{\epsilon}\in \mu_2^{n-1}}
\left(\upp{\left(\zhalf{\rrpbker{F_{n-1}}}[\underline{\epsilon}]\right)}
{W_{n,\underline{\epsilon}}}\right).
\end{eqnarray*}

Now for any $\underline{\epsilon}\in \mu_2^{n-1}$, 
$W_{n,\underline{\epsilon}}=\{ \chi_{1,\underline{\epsilon}},\chi_{-1,\underline{\epsilon}}\}$ 
and 
\[
\chi_{1,\underline{\epsilon}}|_{\bar{U}_n}=\chi_0,\quad \chi_{-1,\underline{\epsilon}}|_{\bar{U}_n}=\chi_{v_n}
\]
so that
\[
(W_{n,\underline{\epsilon}})|_{\bar{U}_n}=\{ \chi_0,\chi_{v_n}\}= W_{v_n}.
\]

Thus 
\begin{eqnarray*}
\upp{\zhalf{\rrpbker{F}}}{W_n} &\cong&\bigoplus_{\underline{\epsilon}\in \mu_2^{n-1}}
\left(\upp{\left(\zhalf{\rrpbker{F_{n-1}}}[\underline{\epsilon}]\right)}
{W_{n,\underline{\epsilon}}}\right)\\
&=& \bigoplus_{\underline{\epsilon}\in \mu_2^{n-1}}
\left(\left(\upp{\zhalf{\rrpbker{F_{n-1}}}}
{W_{v_n}}\right)[\underline{\epsilon}]\right)\\
&\cong&\bigoplus_{\underline{\epsilon}\in \mu_2^{n-1}}
\left(\indf{F_n}{F_{n-1}}{\left(\aug{F_n}\zhalf{\rrpbker{F_n}}\right)}[\underline{\epsilon}]\right)
\mbox{ by Corollary \ref{cor:ifrpf}}\\
&\cong & \indf{F_{n-1}}{F}{\left(\indf{F_n}{F_{n-1}}{\left(\aug{F_n}\zhalf{\rrpbker{F_n}}\right)}
\right)}\\
&=& \indf{F_n}{F}{\left(\aug{F_n}\zhalf{\rrpbker{F_n}}\right)}.\\
\end{eqnarray*}

For the second statement, we use induction on $i(\chi)\geq 1$.

 If $i(\chi)=1$ then 
necessarily $\chi=\chi_{v_1}$ and the result is just Corollary \ref{cor:ifrpf} again. 

Suppose now that $i(\chi)>1$ and the result is known for characters $\psi$ with 
$i(\psi)<i(\chi)$. 

Since $\chi\not= \chi_0$ 
\begin{eqnarray*}
\dwn{\zhalf{\rrpbker{F}}}{\chi}&\cong& 
\dwn{\left(\zhalf{\upp{\rrpbker{F}}{\chi_0}}\right)}{\chi}
=\dwn{\left(\zhalf{\aug{F}\rrpbker{F}}\right)}{\chi}.\\
\end{eqnarray*}

By Corollary \ref{cor:ikmf} we thus have
\begin{eqnarray*}
\dwn{\zhalf{\rrpbker{F}}}{\chi}&\cong& 
\dwn{\left( \pieps{\aug{F_1}\zhalf{\rrpbker{F_1}}}{\pi_1}{1}\right)}{\chi}
\oplus \dwn{\left( \pieps{\zhalf{\rrpbker{F_1}}}{\pi_1}{-1}\right)}{\chi}\\
&\cong &
\left\{
\begin{array}{ll}
\pieps{\left(\dwn{\aug{F_1}\zhalf{\rrpbker{F_1}}}{\chi|_{\bar{U}_1}}\right)}{\pi_1}{1},
&\mbox{ if }\chi(\pi_1)=1\\
\pieps{\left(\dwn{\zhalf{\rrpbker{F_1}}}{\chi|_{\bar{U}_1}}\right)}{\pi_1}{-1}
,&\mbox{ if }\chi(\pi_1)=-1\\
\end{array}
\right.
\end{eqnarray*}

However, since $i(\chi)>1$, $\chi|_{\bar{U}_1}\not=\chi_0$ and thus 
\[
\dwn{\aug{F_1}\zhalf{\rrpbker{F_1}}}{\chi|_{\bar{U}_1}}=
\dwn{\zhalf{\rrpbker{F_1}}}{\chi|_{\bar{U}_1}}.
\]

Furthemore, observe that $i(\chi|_{\bar{U}_1})=i(\chi)-1$.

It follows that 
\begin{eqnarray*}
\dwn{\zhalf{\rrpbker{F}}}{\chi}&\cong& 
\pieps{\left(\dwn{\zhalf{\rrpbker{F_1}}}{\chi|_{\bar{U}_1}}\right)}{\pi_1}{\chi(\pi_1)}\\
&\cong & \pieps{\left( \zhalf{\redpb{F_i}}[\epsilon(\chi|_{\bar{U}_1})]\right)}{\pi_1}{\chi(\pi_1)}
\mbox{ by induction }\\
&=& \zhalf{\redpb{F_i}}[\epsilon(\chi)]
\end{eqnarray*}
as required. 
\end{proof}

\begin{thm}\label{thm:rpbwn} 
Let $n\geq 1$ and let $F=F_0, F_1,\ldots, F_n$ be a sequence of fields such that 
for $i=1,\ldots, n$ there exists a discrete valuation $v_i$ on $F_{i-1}$ with residue field 
$F_i$. We suppose also that each of these valuations satisfies the condition that there exists 
$n_i\geq 1$ with $U_{n_i,v_i}\subset U_{1,v_i}^2$.

Then there is a natural short exact sequence
\[
0\to 
\indf{F_n}{F}{\left(\hot{F_n}{\znth{\Z}{6}}\right)}\to 
\ho{3}{\spl{2}{F}}{\znth{\Z}{6}}\to
\znth{\kind{F}}{6}\oplus \left(\bigoplus_{i=1}^{n}\znth{\pb{F_i}}{6}^{\oplus 2^{i-1}}\right)
\to 0. 
\]

Furthermore, if $X^2-X+1$ has a root in $F$, we can 
replace $\frac{1}{6}$ with $\frac{1}{2}$ in this
statement.
\end{thm}

\begin{proof}
Since $\ho{3}{\spl{2}{F}}{\znth{\Z}{6}}$ is a $\znth{\sgr{F}}{6}$-module, there is a natural 
short exact sequence
\[
0\to 
\upp{\ho{3}{\spl{2}{F}}{\znth{\Z}{6}}}{W}\to 
\ho{3}{\spl{2}{F}}{\znth{\Z}{6}}\to
\dwn{\ho{3}{\spl{2}{F}}{\znth{\Z}{6}}}{W}\to
0
\]
where $W=W_n\subset \dual{\sq{F}}$. 

Since $\chi_0\in W$, 
\[
\upp{\ho{3}{\spl{2}{F}}{\znth{\Z}{6}}}{W}=
\upp{\left(\upp{\ho{3}{\spl{2}{F}}{\znth{\Z}{6}}}{\chi_0}\right)}{W}
=\upp{\left(\hot{F}{\znth{\Z}{6}}\right)}{W}.
\]

However, 
\[
\hot{F}{\znth{\Z}{6}}\cong \znth{\aug{F}\rrpbker{F}}{6}
\]
by Corollary \ref{cor:rpbker3}.

Thus 
\begin{eqnarray*}
\upp{\ho{3}{\spl{2}{F}}{\znth{\Z}{6}}}{W}&\cong& 
\upp{\left(\znth{\aug{F}\rrpbker{F}}{6}\right)}{W}\\
&\cong & \upp{\left(\znth{\rrpbker{F}}{6}\right)}{W}\\
&\cong & \indf{F_n}{F}{\left(\znth{\aug{F_n}\rrpbker{F_n}}{6}\right)}\\
&\cong & \indf{F_n}{F}{\left(\hot{F_n}{\znth{\Z}{6}}\right)}
 \mbox{ by Proposition \ref{prop:rpbwn}}.\\
\end{eqnarray*}

On the other hand, we have 
\[
\dwn{\ho{3}{\spl{2}{F}}{\znth{\Z}{6}}}{W}\cong
 \oplus_{\chi\in W}\dwn{\ho{3}{\spl{2}{F}}{\znth{\Z}{6}}}{\chi}.
\]

When $\chi=\chi_0$
\[
\dwn{\ho{3}{\spl{2}{F}}{\znth{\Z}{6}}}{\chi}=\znth{\kind{F}}{6}
\]
by Lemma \ref{lem:h3sl20} (2).

If $\chi\in W\setminus\{ \chi_0\}$ then
\[
\dwn{\ho{3}{\spl{2}{F}}{\znth{\Z}{6}}}{\chi}
=\dwn{\hot{F}{\znth{\Z}{6}}}{\chi}
\cong \dwn{\znth{\aug{F}\rrpbker{F}}{6}}{\chi}
\cong \dwn{\znth{\rrpbker{F}}{6}}{\chi}
\cong \znth{\redpb{F_{i(\chi)}}}{6}
\] 
by Proposition \ref{prop:rpbwn} again. 

Since $\znth{\redpb{F}}{6}=\znth{\pb{F}}{6}$ for any field $F$, and since for any 
$i\leq n$,  
\[
\card{\{ \chi\in W\ |\ i(\chi)=i\}}= \card{W_{i}\setminus W_{i-1}}= 2^{i-1}
\]
the result follows. 
\end{proof}

\begin{cor}\label{cor:rpbwn} 
Let $n\geq 1$ and let $F=F_0, F_1,\ldots, F_n$ be a sequence of fields such that 
for $i=1,\ldots, n$ there exists a discrete valuation $v_i$ on $F_{i-1}$ with residue field 
$F_i$. We suppose also that each of these valuations satisfies the condition that there exists 
$n_i\geq 1$ with $U_{n_i,v_i}\subset U_{1,v_i}^2$.

Suppose further that $\hot{F_n}{\znth{\Z}{6}}=0$ (eg. if $F_n$ is finite or algebraically closed 
or real closed). 

Then 
\[
\ho{3}{\spl{2}{F}}{\znth{\Z}{6}}\cong
\znth{\kind{F}}{6}\oplus \left(\bigoplus_{i=1}^{n}\znth{\pb{F_i}}{6}^{\oplus 2^{i-1}}\right)
\]

Furthermore, if $X^2-X+1$ has a root in $F$, we can 
replace $\frac{1}{6}$ with $\frac{1}{2}$ in this
statement.
\end{cor}

\begin{exa} Let $p$ be an odd prime number. Let $F=\laurs{\laurs{\F{p}}{x_n}\cdots}{x_1}$.

For any $n\geq 1$ we then have 
\[
\ho{3}{\spl{2}{F}}{\znth{\Z}{6}}\cong 
\znth{\kind{F}}{6}\oplus 
\left(\bigoplus_{i=1}^{n}\znth{\pb{\laurs{\laurs{\F{p}}{x_n}\cdots}{x_i}}}{6}^{\oplus 2^{i-1}}\right).
\] 

We will see in the next section that we can always 
replace $\frac{1}{6}$ with $\frac{1}{2}$ in this particular example. 
\end{exa}
\section{ $3$-torsion and the module $\cconstmod{F}$}\label{sec:3tors}

We begin by observing that we can refine the main result of section \ref{sec:main} as follows:

Let $F$ be a field with discrete valuation $v:F^\times \to \Z$. We consider the module
\[
\cconstmod{F}(v):= \ker{\spec{v}: \aug{F}\cconstmod{F}\to \aug{F}(\indf{k}{F}{\cconstmod{k}})}.
\]
Observe that this module is annihilated by $3$.

\begin{prop}\label{prop:mainsv3}
Let $F$ be a field with discrete valuation $v:F^\times \to \Z$. Suppose that there exists some 
$n\geq 1$ such that $U_n\subset U_1^2$. Then the homomorphism 
\[
\spec{v}:\zhalf{\rpbker{F}}\to \indf{k}{F}{\zhalf{\rpbker{k}}}
\]
induces a surjective homomorphism 
\[
\aug{F}\zhalf{\rpbker{F}}\to\aug{F}\left(\indf{k}{F}{\zhalf{\rpbker{k}}}\right)
\]
whose kernel is $\cconstmod{F}(v)$. 
\end{prop}

\begin{proof}
There is a natural commutative diagram of $\zhalf{\sgr{F}}$-modules with exact rows
\begin{eqnarray}\label{comm}
\xymatrix{
0\ar[r]
&\cconstmod{F}\ar[r]\ar@{>>}^-{\spec{v}}[d]
&\zhalf{\rpbker{F}}\ar[r]\ar@{>>}^-{\spec{v}}[d]
&\zhalf{\rrpbker{F}}\ar[r]\ar@{>>}^-{\spec{v}}[d]
&0\\
0\ar[r]
&\indf{k}{F}{\cconstmod{k}}\ar[r]
&\indf{k}{F}{\zhalf{\rpbker{k}}}\ar[r]
&\indf{k}{F}{\zhalf{\rrpbker{k}}}\ar[r]
&0\\}
\end{eqnarray}

By Corollary \ref{cor:exact1}, the diagram remains exact after applying the functor 
$M\to \aug{F}M$. By Theorem \ref{thm:mainsv}, the right-hand vertical arrow then 
becomes an isomorphism. The statement then follows from the snake lemma.
\end{proof}

From this we deduce, as in section \ref{sec:main}:

\begin{cor}\label{cor:ifrpf2}
Let $F$ be a field with discrete valuation $v:F^\times \to \Z$  and residue field $k$.
  Suppose that there exists some 
$n\geq 1$ such that $U_n\subset U_1^2$. Suppose further that $\cconstmod{F}(v)=0$.

Let 
$W=W_v:=\{ \chi_0,\chi_v\}$. 
Let $\pi\in F^\times$ satisfy 
$v(\pi)=1$. 

Then there are natural isomorphisms of $\sgr{F}$-modules
\begin{eqnarray*}
\upp{\zhalf{\rpbker{F}}}{W}&\cong &\indf{k}{F}{\left(\aug{k}\zhalf{\rpbker{k}}\right)}\\
\end{eqnarray*}
and
\[
\dwn{\zhalf{\rpbker{F}}}{\chi_v}\cong \pieps{\left(\zhalf{\pb{k}}\right)}{\pi}{-1}.
\]
\end{cor}

Since 
\[
\aug{F}\zhalf{\rpbker{F}} \cong \hot{F}{\zhalf{\Z}}=\aug{F}\ho{3}{\spl{2}{F}}{\zhalf{\Z}}
\]
we deduce:

\begin{cor}\label{cor:hot2}
Let $F$ be a field with discrete valuation $v:F^\times \to \Z$  and residue field $k$.
  Suppose that there exists some 
$n\geq 1$ such that $U_n\subset U_1^2$. Suppose further that $\cconstmod{F}(v)=0$.

Let 
$W=W_v:=\{ \chi_0,\chi_v\}$. 
Let $\pi\in F^\times$ satisfy 
$v(\pi)=1$. 

Then there are natural isomorphisms of $\sgr{F}$-modules
\begin{eqnarray*}
\upp{\ho{3}{\spl{2}{F}}{\zhalf{\Z}}}{W}&\cong &\indf{k}{F}{\left(\hot{k}{\zhalf{\Z}}\right)}\\
\end{eqnarray*}
and
\[
\dwn{\ho{3}{\spl{2}{F}}{\zhalf{\Z}}}{W}\cong \zhalf{\kind{F}}\oplus\zhalf{\pb{k}}.
\]


\end{cor}

Using this result together with induction on $n$, we deduce as in \ref{thm:rpbwn}:

\begin{cor}\label{cor:hofn2} 
Let $n\geq 1$ and let $F=F_0, F_1,\ldots, F_n$ be a sequence of fields such that 
for $i=1,\ldots, n$ there exists a discrete valuation $v_i$ on $F_{i-1}$ with residue field 
$F_i$.

 Suppose furthermore that 
\begin{enumerate}
\item    For each $i$ there exists $n_i$ such that $U_{v_i,n_i}\subset U_{v_i,1}^2$

\item $\cconstmod{F_{i-1}}(v_i)=0$ for $1\leq i \leq n$
\end{enumerate}
Then there is a natural short exact sequence 
\[
0\to 
\indf{F_n}{F}{\left(\hot{F_n}{\znth{\Z}{2}}\right)}\to 
\ho{3}{\spl{2}{F}}{\znth{\Z}{2}}\to
\znth{\kind{F}}{2}\oplus \left(\bigoplus_{i=1}^{n}\znth{\pb{F_i}}{2}^{\oplus 2^{i-1}}\right)
\to 0. 
\]
In particular, if $F_n$ is finite or real-closed or algebraically closed then
\[
\ho{3}{\spl{2}{F}}{\znth{\Z}{2}}\cong
\znth{\kind{F}}{2}\oplus \left(\bigoplus_{i=1}^{n}\znth{\pb{F_i}}{2}^{\oplus 2^{i-1}}\right).
\]
\end{cor}

In the remainder of this section, we describe some circumstances under which this kernel,
$\cconstmod{F}(v)$, is trivial. Note that a sufficient condition for this to happen is that 
the surjective homomorphism $\spec{v}:\cconstmod{F}\to \indf{k}{F}{\cconstmod{k}}$ be an isomorphism.

Recall from section \ref{sec:cconst} that $E$ is the field obtained from $F$ by 
adjoining a root of $X^2-X+1$, $\tnorm{F}=\pm N_{E/F}(E^\times)$ , $\rsq{F}=F^\times/\tnorm{F}$ 
and $\rsgr{F}$ is the group algebra $\F{3}[\rsq{F}]$.

\begin{lem}\label{lem:dfv}
 Let $F$ be a field with discrete valuation $v$ with residue field 
$k$. Then $\cconstmod{F}(v)=0$ 
under any of the following circumstances:
\begin{enumerate} 
\item The polynomial $X^2-X+1$ has a root in $F$.
\item $v$ is complete and $k$ is finite of characteristic not equal to $3$
\item $\chara{F}=0$ and $v$ is complete and $k$ is finite of characteristic $3$ and either 
$[F:\Q_3]$ is odd or $\sqrt{-3}\in F$.
\end{enumerate}
\end{lem}

\begin{proof}\ 

\begin{enumerate}
\item This is clear since in this case $\cconstmod{F}=\cconstmod{k}=0$.
\item
Suppose that $X^2-X+1$ has no root in $F$. Since $\chara{k}\not=3$, it follows that
 $X^2-X+1$ has no root in $k$. 

Thus $\cconstmod{F}\not=0$ and $\cconstmod{k}\not= 0$. 
Since $k$ is a finite field, $k^\times =\tnorm{k}$, $\rsgr{k}=\F{3}$  and  hence 
$\cconstmod{k}=\F{3}\cconst{k}$ is a $1$-dimensional vector space over $\F{3}$. Thus
$\indf{k}{F}{\cconstmod{k}}=\cconstmod{k}\oplus \cconstmod{k}$ is $2$-dimensional. 

Now since $E/F$ is a quadratic extension of local fields, we have 
\[
\card{F^\times/N_{E/F}(E^\times)}=[E:F]=2.
\]
However, the homomorphism $\spec{v}: \cconstmod{F}\to \indf{k}{F}{\cconstmod{k}}$ 
is surjective and hence
\[
\dim{\F{3}}{\cconstmod{F}}\leq \dim{\F{3}}{\rsgr{F}}\leq 
\dim{\F{3}}{\F{3}[F^\times/N_{E/F}(E^\times)]}
= 2
\]
so that $\spec{v}$ is an isomorphism as required.
\item  This particular case is proved in \cite[Corollary 6.3]{hut:rbl11}.
\end{enumerate}
\end{proof}
\begin{rem} Of course, the case where $\chara{F}=3=\chara{k}$ is included in case (1) of this 
lemma.
\end{rem}

Combining Lemma \ref{lem:dfv} with Corollary \ref{cor:hot2},  we extend \cite[Theorem 6.19]{hut:rbl11} 
to include $2$-adic local fields:

\begin{prop}\label{prop:qp}
Let $F$ be a field complete with respect to a discrete valuation $v$ and finite residue field $k$. 

If $\chara{k}=2$, we suppose that $\Q_2\subset F$. If $\Q_3\subset F$, suppose that $[F:\Q_3]$ is odd. 

Then 
\[
\ho{3}{\spl{2}{F}}{\zhalf{\Z}}\cong \zhalf{\pb{k}}\oplus \zhalf{\kind{F}}.
\]
\end{prop}

\begin{rem}
The condition on fields of residue characteristic $2$ is included to ensure that the 
hypothesis $U_n\subset U_1^2$ holds.
The condition on fields of residue characteristic $3$ is included to ensure that 
the hypothesis $\cconstmod{F}(v)=0$ holds.
\end{rem}

\begin{exa}
In particular we have
\[
\ho{3}{\spl{2}{\Q_p}}{\zhalf{\Z}}\cong \zhalf{\pb{\F{p}}}\oplus \zhalf{\kind{\Q_p}}.
\]
for \emph{all} primes $p$. 
\end{exa}

\begin{lem}\label{lem:rsq}
Let $F$ be a field with discrete valuation $v$ and residue field $k$. 
Suppose that $U_1=U_1^2$, that the polynomial $X^2-X+1$ has no root in  $k$. 

Then the $\sgr{F}$-algebra  homomorphism 
\[
\sgr{F}\to \indf{k}{F}{(\sgr{k})}
=\sgr{F}\otimes_{\Z[U/U^2]}\sgr{k}, \quad \an{a}\mapsto \an{a}\otimes 1
\] 
induces an isomorphism $\rsgr{F}\cong \indf{k}{F}{(\rsgr{k})}$.
\end{lem}
\begin{proof} 
Since $U_1=U_1^2$, we have $U/U^2\cong \sq{k}$ and hence $\Z[U/U^2]=\sgr{k}$. Thus 
$\indf{k}{F}{(\sgr{k})}=\sgr{F}\otimes_{\sgr{k}}\sgr{k}=\sgr{F}$ canonically.

Let $\pi\in F$ be a uniformizer. Let $C_\pi$ denote the multiplicative group of order $2$ 
generated by the square class of $\pi$. Then 
\[
\sq{F}\cong U/U^2\times C_\pi \cong \sq{k}\times C_\pi 
\]
and hence $\sgr{F}=\sgr{k}[C_\pi]$ and $(\rsgr{k})_F=\rsgr{k}[C_\pi]$.

Thus, it will be enough to show that 
\[
F^\times/\tnorm{F}=(k^\times/\tnorm{k})\times C_\pi.
\]

Let $\tilde{\mathcal{N}}_k=\{ u\in U\ |\ \bar{u}\in\tnorm{k}\}$. 
Thus we require to show that 
\[
\tnorm{F}=\pi^{2\Z}\cdot \tilde{\mathcal{N}}_k. 
\]

Let $u\in \tilde{\mathcal{N}}_k$. Thus $\pm\bar{u}=\bar{s}^2-\bar{s}\bar{t}+\bar{t}^2$ for 
some $s,t\in U$. 
Thus $\pm u=(s^2-st+t^2)w^2$ for some $w\in U_1=U_1^2$, and hence $u\in \tnorm{F}$.

It remains to show that $\pi\not\in \tnorm{F}$: Suppose that $a,b\in F^\times$. Then 
$v(a^2-ab+b^2)\equiv v(1-c+c^2)\pmod{2}$ where $c=b/a$. If $v(c)\not=0$ then $v(1-c+c^2)\equiv 0\pmod{2}$. On the other 
hand if $v(c)=0$ then $v(1-c+c^2)=0$ since $1-\bar{c}+\bar{c}^2\not= 0$ in $k$ by hypothesis. 

It follows that if $x\in\tnorm{F}$ then $v(x)$ is even, and thus $\pi\not\in\tnorm{F}$.    
\end{proof}

\begin{cor}\label{cor:dffree}
Let $F$ be a field with discrete valuation $v$ and residue field $k$. 
Suppose that $U_1=U_1^2$, that the polynomial $X^2-X+1$ has no root in  $k$.

Suppose further that $\cconstmod{k}$ is a free 
rank one $\rsgr{k}$-module. 

Then  $\cconstmod{F}$ is a free 
rank one $\rsgr{F}$-module and the homomorphism 
\[
\spec{v}: \cconstmod{F}\to \indf{k}{F}{(\cconstmod{k})}
\]
is an isomorphism and $\cconstmod{F}(v)=0$.
\end{cor}
\begin{proof}
The diagram 
\[
\xymatrix{
\rsgr{F}\ar@{->>}[d]\ar^-{\cong}[r]
&
\indf{k}{F}{(\rsgr{k})}\ar^{\cong}[d]\\
\cconstmod{F}\ar@{->>}^-{\spec{v}}[r]
&
\indf{k}{F}{(\cconstmod{k})}\\
}
\]
commutes, since $\spec{v}(\an{a}\cconst{F})=\an{a}\otimes \cconst{k}$.
\end{proof}

In view of the hypothesis of Corollary \ref{cor:dffree}, we itemize some examples of 
fields $F$ for which $\cconstmod{F}$ is a free $\rsgr{F}$-module. 

\begin{prop}\label{prop:dffree}
Let $F$ be a field in which $X^2-X+1$ has no roots. Then $\cconstmod{F}$ is a free 
$\rsgr{F}$-module of rank $1$ in the following cases:
\begin{enumerate}
\item $F$ is a finite field.
\item $F$ is a real closed field.
\item $F$ is complete with respect to a discrete valuation $v$ with finite residue field of 
characteristic not equal to $3$.
\item $\chara{F}=0$ and $F$ is complete with respect to a discrete valuation 
$v$ with finite residue field of characteristic not $3$ and $[F:\Q_3]$ is odd. 
\end{enumerate}
\end{prop}

\begin{proof}\ 

\begin{enumerate}
\item As observed in the proof of  Lemma \ref{lem:dfv} (2), $\tnorm{F}=F^\times$ in this 
case and thus $\rsgr{F}=\F{3}$. Thus $\cconstmod{F}=\F{3}\cdot\cconst{F}$ is free of 
rank $1$ as a $\rsgr{F}$-module.
\item In this case also we must have $\tnorm{F}=F^\times$.
\item When $\chara{k}\not=3$ the argument in the proof of Lemma \ref{lem:dfv} (2), shows that 
\[
\dim{\F{3}}{\cconstmod{F}}=2=\dim{\F{3}}{\rsgr{F}}
\]
so that the conclusion follows at once.
\item In the proof of \cite[Lemma 6.2]{hut:rbl11} it is shown that $\tnorm{F}=F^\times$ under 
the given hypotheses.  
\end{enumerate}
\end{proof}

Combining Corollary \ref{cor:dffree} and Proposition \ref{prop:dffree} with Corollary 
\ref{cor:hofn2} deduce:

\begin{cor}\label{cor:fianl} 
Let $n\geq 1$ and let $F=F_0, F_1,\ldots, F_n$ be a sequence of fields such that 
for $i=1,\ldots, n$ there exists a complete discrete valuation $v_i$ on $F_{i-1}$ with residue field 
$F_i$.

Suppose that $F_n$ is real-closed or finite or that $F_n$ is quadratically closed of characteristic 
not $2$. Suppose furthermore that 
\begin{enumerate}
\item   $\chara{F_{n-1}}\not=2$ \emph{or}
\item $\chara{F_n}\not=3$ \emph{or}
\item $\chara{F_n}=3$ and  $\chara{F_{n-1}}=0$ and either $[F_{n-1}:\Q_3]$ is odd or 
$\sqrt{-3}\in F_{n-1}$.
\end{enumerate}
Then 
\[
\ho{3}{\spl{2}{F}}{\zhalf{\Z}}\cong \zhalf{\kind{F}}\oplus
\left( \bigoplus_{i=1}^{n}\zhalf{\pb{F_i}}^{\oplus 2^{i-1}}\right).
\]
\end{cor}

\begin{exa}
Let $p$ be an odd prime number. Let $F=\laurs{\laurs{\F{p}}{x_n}\cdots}{x_1}$.

For any $n\geq 1$ we then have 
\[
\ho{3}{\spl{2}{F}}{\znth{\Z}{2}}\cong 
\znth{\kind{F}}{2}\oplus 
\left(\bigoplus_{i=1}^{n}\znth{\pb{\laurs{\laurs{\F{p}}{x_n}\cdots}{x_i}}}{2}^{\oplus 2^{i-1}}\right).
\] 
\end{exa}

\begin{exa}\label{exa:qpxx} Let $p$ be any prime, let $n\geq 2$ and  and let 
$F=\laurs{\laurs{\Q_p}{x_{n-1}}\cdots}{x_1}$. Then 
 \[
\ho{3}{\spl{2}{F}}{\znth{\Z}{2}}\cong 
\znth{\kind{F}}{2}\oplus 
\left(\bigoplus_{i=1}^{n}\znth{\pb{F_i}}{2}^{\oplus 2^{i-1}}\right)
\]
where $F_i= \laurs{\laurs{\Q_p}{x_{n-1}}\cdots}{x_{i+1}}$ for $i\leq n-2$, $F_{n-1}=\Q_p$  and $F_n=\F{p}$. 
\end{exa}

\begin{exa}\label{exa:rxx} Let 
$F=\laurs{\laurs{\R}{x_{n}}\cdots}{x_1}$. Then 
 \[
\ho{3}{\spl{2}{F}}{\znth{\Z}{2}}\cong 
\znth{\kind{F}}{2}\oplus 
\left(\bigoplus_{i=1}^{n}\znth{\pb{F_i}}{2}^{\oplus 2^{i-1}}\right)
\]
where $F_i= \laurs{\laurs{\R}{x_{n}}\cdots}{x_{i+1}}$ for $i\leq n-1$ and $F_{n}=\R$. 
\end{exa}

\appendix
\section{The specialization map $\spec{v}$ revisited}\label{sec:appendix}
In this appendix we revisit the specialization map $\spec{v}$ introduced in 
\cite{hut:rbl11} and show that it can be defined with an improved target. 
This improved version of the specialization homomorphism is required for our 
calculations in section \ref{sec:3tors} above.

Given a field $F$ with a valuation $v:F^\times\to \Gamma$ and corresponding residue field $k$, we 
let $U=U_v$ denote the 
corresponding group of units and $U_1=U_{1,v}$ the 
units mapping to $1$ in $k^\times$. The natural quotient map  
$\mathcal{O}_v\to k$ will be denoted $a\mapsto \bar{a}$. 

Given any $\sgr{k}$-module $M$, 
we define the induced $\sgr{F}$-module
\[
\indf{k}{F}{M}:=\sgr{F}\otimes_{\Z[U]}M=\sgr{F}\otimes_{\Z[U/U^2]}M.
\]

In \cite{hut:rbl11} we proved the existence of a natural surjective specialization homomorphism 
\[
\spec{v}:\rpb{F}\to \indf{k}{F}{\rrrpb{k}}
\]
where $\rrrpb{k}:= \rpb{k}/(\ks{1}{k}+\aug{k}\cconstmod{k})=\rpb{k}/(\ks{1}{k}+\ks{2}{k})$.

In this appendix, we revisit this specialization map and prove that it can 
be defined with target $\indf{k}{F}{\qrpb{k}}$, rather than $\rrrpb{k}_F$.

\begin{lem}\label{lem:cfsus1}
For any field $F$ we have $3\bconst{F}=\suss{1}{-1}$.
\end{lem}

\begin{proof}
Let $a\in F^\times\setminus\{ 1\}$. Then
\begin{eqnarray*}
3\bconst{F}&=& C(a)+C(1-a^{-1})+C\left( \frac{1}{1-a}\right)\\
&=& \gpb{a}+\an{-1}\gpb{1-a}+\pf{\frac{1}{1-a}}\suss{1}{a}\\
&+& \gpb{1-a^{-1}}+\an{-1}\gpb{a^{-1}}+\pf{a}\suss{1}{1-a^{-1}}\\
&+& \gpb{\frac{1}{1-a}}+\an{-1}\gpb{\frac{1}{1-a^{-1}}}+\pf{1-a^{-1}}\suss{1}{\frac{1}{1-a}}\\
&=& \suss{1}{a}+\suss{1}{1-a^{-1}}+\suss{1}{\frac{1}{1-a}}\\
&+& \pf{\frac{1}{1-a}}\suss{1}{a}+\pf{a}\suss{1}{1-a^{-1}}+\pf{1-a^{-1}}\suss{1}{\frac{1}{1-a}}\\
&=& \an{\frac{1}{1-a}}\suss{1}{a}+\an{a}\suss{1}{1-a^{-1}}+\an{1-a^{-1}}\suss{1}{\frac{1}{1-a}}\\
&=& \suss{1}{\frac{1}{a^{-1}-1}}-\suss{1}{\frac{1}{1-a}}+\suss{1}{a-1}-\suss{1}{a}
+\suss{1}{-\frac{1}{a}}-
\suss{1}{1-a^{-1}}\\
&=& 3\suss{1}{-1}=\suss{1}{-1}
\end{eqnarray*}
since for any $b$, we have 
\[
\suss{1}{b}-\suss{1}{-b^{-1}}= \suss{1}{b}-\an{-1}\suss{1}{b^-1}+\suss{1}{-1}=\suss{1}{-1}
\]
by Lemma  \ref{lem:sus1} (1).
\end{proof}

\begin{lem}\label{lem:df}  
For all $a\in F^\times$
\[
\pf{a}\bconst{F}=\suss{2}{a}-\suss{1}{a^{-1}}. 
\]
\end{lem}

\begin{proof}
Since $3\bconst{F}=\suss{1}{-1}$ by Lemma \ref{lem:cfsus1} and $2\bconst{F}=\cconst{F}$, 
we have $\bconst{F}=\suss{1}{-1}-\cconst{F}$. 

Thus, if $a\in F^\times$ we have 
\[
\pf{a}\bconst{F}=\pf{a}\suss{1}{-1}-\pf{a}\cconst{F}=\pf{a}\suss{1}{-1}-\pf{-a}\cconst{F}
\]
since $\pf{-1}\cconst{A}=0$. Thus, by Theorem \ref{thm:df}
\begin{eqnarray*}
\pf{a}\bconst{F}&=&\pf{a}\suss{1}{-1}-\suss{1}{-a}+\suss{2}{-a}\\
&=& \an{a}\suss{1}{-1}-\suss{1}{-1}-\an{a}\suss{1}{-1}-\suss{1}{a}+\suss{2}{-a}\\
&=& -(\suss{1}{-1}+\suss{1}{a})+\suss{2}{-a}\\
&=& -(\suss{1}{-1}+\an{-1}\suss{1}{a^{-1}})+\suss{2}{-a}\\
&=& \suss{2}{-a}-\suss{1}{-a^{-1}}.\\
\end{eqnarray*}

However, $\pf{-1}\bconst{F}=0$. 
Thus $\pf{a}\bconst{F}=\pf{-a}\bconst{F}=\suss{2}{a}-\suss{1}{a^{-1}}$.
\end{proof}

\begin{cor}\label{cor:bconst}
Let $F$ be a field and let $a\in F^\times$. 
\begin{enumerate}
\item
 $\an{a}\bconst{F}=\bconst{F}+\suss{2}{a}$ in $\qrpb{F}$.
\item
$\suss{2}{a}=\suss{2}{a^{-1}}=\suss{2}{-a}$ in $\qrpb{F}$.
\end{enumerate}
\end{cor}
\begin{proof} 
\begin{enumerate}
\item
This is an immediate consequence of the formula in Corollary \ref{lem:df} above.
\item
This follows immediately from (1) since $\an{a}=\an{a^{-1}}$ and $\an{-1}\bconst{F}=\bconst{F}$.
\end{enumerate}
\end{proof}

Note also, that in $\qrpb{F}$, we have, by definition, that $\bconst{F}=\gpb{x}+\an{-1}\gpb{1-x}$ 
for any $x\not=
0,1$.

\begin{thm}\label{thm:spec}
There is a surjective $\sgr{F}$-module homomorphism   
\begin{eqnarray*}
S_{v}:\rpb{F}&\to&\indf{k}{F}{\qrpb{k}}\\
\gpb{a}&\mapsto&\left\{
\begin{array}{rc}
1\otimes \gpb{\bar{a}},& v(a)=0\\
1\otimes\bconst{k},& v(a)>0\\
-(1\otimes\bconst{k}),& v(a)<0\\
\end{array}
\right.
\end{eqnarray*}
\end{thm}

\begin{rem} The proof we give here follows closely the proof of Theorem 4.9 in \cite{hut:rbl11}. Only the following 
cases differ: Case (v) (a) and (b), Case (vii). These cases use Corollary \ref{cor:bconst}, which in turn relies on 
Theorem \ref{thm:df}. 
\end{rem}

\begin{proof}
Let $Z_1$ denote the set of symbols of the form $\gpb{x}, x\not= 1$ and let 
$T:\sgr{F}[Z_1]\to \indf{k}{F}{\qrpb{k}}$ be the unique $\sgr{F}$-homomorphism given by  
\begin{eqnarray*}
\gpb{a}&\mapsto&\left\{
\begin{array}{rc}
1\otimes \gpb{\bar{a}},& v(a)=0\\
1\otimes\bconst{k},& v(a)>0\\
-(1\otimes\bconst{k}),& v(a)<0\\
\end{array}
\right.
\end{eqnarray*}
We must prove that $T(S_{x,y})=0$ for all $x,y\in F^\times\setminus\{ 1\}$.

Through the remainder of this proof we will adopt the following notation: Given $x,y\in F^\times\setminus\{ 1\}$, 
we let
\[
u=\frac{y}{x}\mbox{ and } w=\frac{1-x}{1-y}.
\]
Note that 
\[
\frac{1-x^{-1}}{1-y^{-1}}=\frac{y}{x}\cdot\frac{x-1}{y-1}=uw.
\]
Thus, with this notation, $S_{x,y}$ becomes $\gpb{x}-\gpb{y}+\an{x}\gpb{u}-\an{x^{-1}-1}\gpb{uw}+\an{1-x}\gpb{w}$.

We divide the proof into several cases:
\begin{enumerate}
\item[Case (i):] $v(x),v(y)\not= 0$
\begin{enumerate}
\item[Subcase (a):] $v(x)=v(y)>0$. 

Then $1-x,1-y\in U_1$ and hence $w\in U_1$, so that $\bar{w}=1$ and $\overline{uw}=\bar{u}$. Thus 
\[
T(S_{x,y})=1\otimes\bconst{k}-1\otimes\bconst{k}+\an{x}\otimes\gpb{\bar{u}}-\an{x^{-1}-1}\otimes\gpb{\bar{u}}.
\]
However, $x^{-1}-1=x^{-1}(1-x)$, so that 
$\an{x^{-1}-1}\otimes\bar{u}=\an{x}\otimes\an{1-\bar{x}}\gpb{\bar{u}}=\an{x}\otimes\gpb{\bar{u}}$, and thus 
$T(S_{x,y})=0$ as required.

\item[Subcase (b):] $v(x)=v(y)<0$.

Then $u\in U$ and $uw\in U_1$ so that $\bar{w}=\bar{u}^{-1}$. Thus
\[
T(S_{x,y})=-1\otimes\bconst{k}+1\otimes\bconst{k}+\an{x}\otimes\gpb{\bar{u}}+\an{1-x}\otimes\gpb{\bar{u}^{-1}}.
\]

But $1-x=-x(1-x^{-1})$ and $1-x^{-1}\in U_1$, so that the last term is $\an{-x}\otimes\gpb{\bar{u}^{-1}}$ and 
hence $T(S_{x,y})=\an{x}\otimes\suss{1}{\bar{u}}=0$ in $\indf{k}{F}{\qrpb{k}}$.


\item[Subcase (c):] $v(x)>v(y)>0$.

Then $w\in U_1$ and $v(u), v(uw)<0$. So
\[
T(S_{x,y})=1\otimes \bconst{k}-1\otimes\bconst{k}-\an{x}\otimes\bconst{k}+\an{x^{-1}-1}\otimes\bconst{k}.
\]
But since $x^{-1}-1=x^{-1}(1-x)$ and $1-x\in U_1$ it follows that $\an{x^{-1}-1}\otimes\bconst{k}=\an{x}\otimes\bconst{k}$
and hence $T(S_{x,y})=0$.
\item[Subcase (d):] $v(x)>0>v(y)$.

Then $v(u)=v(y)-v(x)<0$, $v(w)=-v(1-y)=-v(y(y^{-1}-1))=-v(y)>0$ and $v(uw)=v(u)+v(w)=-v(x)<0$. So 
\[
T(S_{x,y})=1\otimes\bconst{k}+1\otimes\bconst{k}-\an{x}\otimes \bconst{k}+\an{x^{-1}-1}\otimes\bconst{k}+
\an{1-x}\otimes\bconst{k}.
\]
But $1-x\in U_1$ and $\an{x^{-1}-1}=\an{x}\an{1-x}$. So this gives $T(S_{x,y})=1\otimes 3\bconst{k}$. However,
$3\bconst{k}=\suss{1}{-1}$ in $\qrpb{k}$.
\item[Subcase (e):] $0>v(x)>v(y)$.

Then $v(u)=v(y)-v(x)<0$ and $uw\in U_1$, so that $v(w)=-v(u)>0$ and $\bar{w}=\bar{u}^{-1}$. Thus 
\[
T(S_{x,y})=-(1\otimes\bconst{k})+1\otimes\bconst{k}-\an{x}\otimes\bconst{k}+\an{1-x}\otimes\bconst{k}.
\]
Now $1-x=-x(1-x^{-1})$ and $1-x^{-1}\in U_1$, so the last term is 
$\an{x}\otimes\an{-1}\bconst{k}=\an{x}\otimes\bconst{k}$. This gives 
$T(S_{x,y})=0$ as required.
\item[Subcases (f),(g),(h):] The corresponding calculations when $v(y)>v(x)$ are almost identical.
\end{enumerate}
\item[Case (ii):] $x,y\in U_1$.
\begin{enumerate}
\item[Subcase (a):] $v(1-x)\not= v(1-y)$.

Then $u\in U_1$ and $v(w)=v(uw)\not= 0$. So 
\[
T(S_{x,y})=\pm\left( \an{x^{-1}-1}\otimes \bconst{k}-\an{1-x}\otimes\bconst{k}\right)=0
\]
since $\an{x^{-1}-1}=\an{x^{-1}}\an{1-x}$ and $x^{-1}\in U_1$.

\item[Subcase (b):] $v(1-x)=v(1-y)$.

Then $u\in U_1$ and $w,uw\in U$ with $\bar{uw}=\bar{w}$. So
\[
T(S_{x,y})=-\an{x^{-1}-1}\otimes\gpb{\bar{w}}+\an{1-x}\otimes\gpb{\bar{w}}
\]
which is $0$ by the same argument as the previous (sub)case.
\end{enumerate}

\item[Case (iii):] $x\in U_1$, $v(y)\not=0$. 

Then $v(u)=v(y)$. Observe that $v(1-y)=\mathrm{min}(v(1),v(y))=\mathrm{min}(0,v(y))\leq 0$. Of course, $v(1-x)>0$.

Thus $v(w)=v(1-x)-v(1-y)>0$ and $v(uw)=v(u)+v(w)=v(1-x)+v(y)-v(1-y)>0$ 
since $v(y)-v(1-y)\geq 0$. So 
\[
T(S_{x,y})=\pm(1\otimes\bconst{k}-1\otimes\bconst{k})-\an{x^{-1}-1}\otimes \bconst{k}+\an{1-x}\otimes\bconst{k}=0
\]
since $x\in U_1$ and thus $\an{x^{-1}-1}\otimes\bconst{k}\an{1-x}\otimes\an{\bar{x}^{-1}}\bconst{k}
=\an{1-x}\otimes\bconst{k}$.

\item[Case (iv):] $v(x)\not=0$, $y\in U_1$

Arguing as in the last case, $v(w),v(uw)<0$ in this case.

\begin{enumerate}
\item[Subcase (a):] $v(x)>0$

Then $v(u)=-v(x)<0$. So 
\[
T(S_{x,y})=1\otimes \bconst{k}-\an{x}\otimes\bconst{k}+\an{x^{-1}-1}\otimes\bconst{k}-\an{1-x}\otimes\bconst{k}=0
\]
since $1-x\in U_1$.

\item[Subcase (b):] $v(x)<0$.

Then $v(u)>0$ and
\[
T(S_{x,y})=-1\otimes \bconst{k}+\an{x}\otimes\bconst{k}+\an{x^{-1}-1}\otimes\bconst{k}-\an{1-x}\otimes\bconst{k}
\]
since $1-x^{-1}\in U_1$.

\end{enumerate}
\item[Case (v):] $x\in U\setminus U_1$ and $v(y)\not=0$.

\begin{enumerate}
\item[Subcase (a):] $v(y)>0$.

Then $v(u)=v(y)>0$. Since $1-x\in U$ and  $1-y\in U_1$, it follows that $v(w)=0$, $\bar{w}=1-\bar{x}$ in $k$ and 
$v(uw)>0$. Thus 
\begin{eqnarray*}
T(S_{x,y})&=&1\otimes\gpb{\bar{x}}-1\otimes\bconst{k}+\an{x}\otimes\bconst{k}-\an{x^{-1}-1}\otimes\bconst{k}
+\an{1-x}\otimes\gpb{1-\bar{x}}=1\otimes X.\\
\end{eqnarray*}
Using Corollary \ref{cor:bconst} (1) we have 
\begin{eqnarray*}
X= \gpb{\bar{x}}+\an{1-\bar{x}}\gpb{1-\bar{x}}-\bconst{k}+\suss{2}{\bar{x}}-\suss{2}{\bar{x}^{-1}-1}\in \qrpb{k}.
\end{eqnarray*}
But
\[
\suss{2}{\bar{x}^{-1}-1}=\suss{2}{\bar{x}^{-1}\cdot (1-\bar{x})}=\an{\bar{x}}\suss{2}{1-\bar{x}}+\suss{2}{\bar{x}^{-1}}
\]
and thus 
\[
X= \gpb{\bar{x}}+\an{1-\bar{x}}\gpb{1-\bar{x}}-\bconst{k} -\an{\bar{x}}\suss{2}{1-\bar{x}}
\]
(using Corollary \ref{cor:bconst} (2)). However,
\[
\bconst{k}=\gpb{\bar{x}}+\an{-1}\gpb{1-\bar{x}}\mbox{ and } 
\an{\bar{x}}\suss{2}{1-\bar{x}}=\an{1-\bar{x}}\gpb{1-\bar{x}}+\gpb{\frac{1}{1-\bar{x}}}.
\]
Thus
\[
X= -\an{-1}\gpb{1-\bar{x}}-\gpb{\frac{1}{1-\bar{x}}}=-\suss{1}{\frac{1}{1-\bar{x}}}=0 \mbox{ in }\qrpb{k}.
\]
\item[Subcase (b):] $v(y)<0$. 

We have $v(u)=v(y)<0$ and $v(w)=-v(1-y)=-v(y)>0$. Thus $v(uw)=v(u)+v(w)=v(y)-v(y)=0$. Furthermore, since 
$1-y^{-1}\in U_1$, $\bar{uw}=1-\bar{x}^{-1}$.

Thus $T(S_{x,y})=1\otimes X$ where 
\[
X= \gpb{\bar{x}}+\bconst{k}-\an{\bar{x}}\bconst{k}-\an{\bar{x}^{-1}-1}\gpb{1-\bar{x}^{-1}}+\an{1-\bar{x}}\bconst{k}\in 
\qrpb{k}.
\]
Using Corollary \ref{cor:bconst} (1) again, and the identity $\bconst{k}= \gpb{1-\bar{x}^{-1}}+\an{-1}\gpb{\bar{x}^{-1}}$ 
we deduce 
\begin{eqnarray*}
x&=& \suss{1}{\bar{x}}+\gpb{1-\bar{x}}-\an{\bar{x}^{-1}-1}\gpb{1-\bar{x}^{-1}}+\suss{2}{1-\bar{x}}-\suss{2}{\bar{x}}\\
&=& \gpb{1-\bar{x}}-\an{\bar{x}^{-1}-1}\gpb{1-\bar{x}^{-1}}+\suss{2}{1-\bar{x}}-\suss{2}{\bar{x}}\\
 &=&\gpb{1-\bar{x}}+\an{1-\bar{x}^{-1}}\gpb{\frac{1}{1-\bar{x}^{-1}}}+\suss{2}{1-\bar{x}}-\suss{2}{\bar{x}}\\
\end{eqnarray*}
(using $\suss{1}{1-\bar{x}^{-1}}=0$ in $\qrpb{k}$ in the last step).

However
\begin{eqnarray*}
\gpb{1-\bar{x}}+\an{1-\bar{x}^{-1}}\gpb{\frac{1}{1-\bar{x}^{-1}}}&=& \an{1-\bar{x}}\suss{2}{\frac{1}{1-\bar{x}^{-1}}}\\
&=& \suss{2}{\frac{1-\bar{x}}{1-\bar{x}^{-1}}}-\suss{2}{1-\bar{x}}\\
&=& \suss{2}{-\bar{x}}-\suss{2}{1-\bar{x}}
\end{eqnarray*}
and so $X=0$ by Corollary \ref{cor:bconst} (2) again.
\end{enumerate}
\item[Case (vi):] $y\in U\setminus U_1$ and $v(x)\not= 0$
\begin{enumerate}
\item[Subcase (a):] $v(x)>0$

we have $v(u)=-v(x)<0$, $v(w)=0$ and $1-x\in U_1$ so that $\bar{w}=(1-\bar{y})^{-1}$. Finally $v(uw)=v(u)<0$. Thus
\begin{eqnarray*}
T(S_{x,y})&=&1\otimes\bconst{k}-1\otimes\gpb{\bar{y}}-\an{x}\otimes\bconst{k}+\an{x^{-1}-1}\otimes\bconst{k}
+\an{1-x}\otimes\gpb{\frac{1}{1-\bar{y}}}.\\
\end{eqnarray*}
However, $(\an{x^{-1}-1}-\an{x})\otimes \bconst{k}=\an{x}\pf{1-x}\otimes \bconst{k}=\an{x}\otimes\pf{1}\bconst{k}=0$.
Thus $T(S_{x,y})=1\otimes Y$ where
\[
Y= \bconst{k}-\gpb{\bar{y}}+\gpb{\frac{1}{1-\bar{y}}}\in \qrpb{k}.
\]
Since $\bconst{k}=\gpb{\bar{y}}+\an{-1}\gpb{1-\bar{y}}$ in $\qrpb{k}$ we thus have 
\[
Y= \an{-1}\gpb{1-\bar{y}}+\gpb{\frac{1}{1-\bar{y}}}=\suss{1}{\frac{1}{1-\bar{y}}}=0 \mbox{ in }\qrpb{k}.
\]
\item[Subcase (b):] $v(x)<0$

We have $v(u)=-v(x)>0$ and $v(w)=v(1-x)=v(x)<0$ and $v(uw)=v(u)+v(w)=0$. 
Furthermore, since $1-x^{-1}\in U_1$, $\bar{uw}=1/(1-\bar{y}^{-1})$. Thus
\begin{eqnarray*}
T(S_{x,y})&=& -(1\otimes \bconst{k})-1\otimes\gpb{\bar{y}}+\an{x}\otimes\bconst{k}
-\an{x^{-1}-1}\otimes\gpb{\frac{1}{1-\bar{y}^{-1}}}-\an{1-x}\otimes\bconst{k}\\
&=& 1\otimes Y
\end{eqnarray*}
where 
\[
Y=-\bconst{k}-\gpb{\bar{y}}-\an{-1}\gpb{\frac{1}{1-\bar{y}^{-1}}} 
\]
since $(\an{x}-\an{1-x})\otimes\bconst{k}=-\an{x}\pf{x^{-1}-1}\otimes\bconst{k}=\an{x}\otimes\pf{-1}\bconst{k}=0$. 

But 
\begin{eqnarray*}
-\bconst{k}-\gpb{\bar{y}}-\an{-1}\gpb{\frac{1}{1-\bar{y}^{-1}}}&=&-
\bconst{k}-\gpb{\bar{y}}-\an{-1}\gpb{\frac{1}{1-\bar{y}^{-1}}}+\suss{1}{1-\bar{y}^{-1}}\\
&=& -\bconst{k}-\gpb{\bar{y}}+\gpb{1-\bar{y}^{-1}}\\
&=& -\bconst{k}-\gpb{\bar{y}}+\gpb{1-\bar{y}^{-1}}+\suss{1}{y}\\
&=& -\bconst{k}+\an{-1}\gpb{\bar{y}^{-1}}+\gpb{1-\bar{y}^{-1}}=0.
\end{eqnarray*}

\end{enumerate}
\item[Case (vii):] $x\in U\setminus U_1$ and $y\in U_1$.

We have $v(u)=0$ and $\bar{u}=\bar{x}^{-1}$. Furthermore, $v(w)=-v(1-y)<0$ and thus $v(uw)=v(w)<0$ also.
Thus
\begin{eqnarray*}
T(S_{x,y})&=&1\otimes\gpb{\bar{x}}+\an{x}\otimes\gpb{\bar{x}^{-1}}+\an{x^{-1}-1}\otimes\bconst{k}
-\an{1-x}\otimes\bconst{k}\\
&=& 1\otimes Z
\end{eqnarray*}
where $Z=\gpb{\bar{x}}+\an{\bar{x}}\gpb{\bar{x}^{-1}}+\an{\bar{x}^{-1}-1}\bconst{k}-\an{1-\bar{x}}\bconst{k}$. 

Thus 
\begin{eqnarray*}
Z&=&
\an{1-\bar{x}^{-1}}\suss{2}{\bar{x}^{-1}}+\suss{2}{\bar{x}^{-1}-1}-\suss{2}{1-\bar{x}}
\mbox{ by Corollary \ref{cor:bconst} (1)}\\
&=& 
\an{\bar{x}^{-1}-1}\suss{2}{\bar{x}}+\suss{2}{\bar{x}^{-1}-1}-\suss{2}{1-\bar{x}}
\mbox{ since $\suss{2}{\bar{x}^{-1}}=\an{-1}\suss{2}{\bar{x}}$}\\
\end{eqnarray*}
and this is $0$ by the cocycle property.
\item[Case (viii):] $x\in U_1$ and $y\in U\setminus U_1$

We have $u\in U$ and $\bar{u}=\bar{y}$, $v(w)=v(1-x)>0$ and $v(uw)=v(w)>0$. So
\begin{eqnarray*}
T(S_{x,y})&=& -(1\otimes\gpb{\bar{y}})+\an{x}\otimes\gpb{\bar{y}}-\an{x^{-1}-1}\otimes\bconst{k}+
\an{1-x}\otimes\bconst{k}\\
&=& -\an{1-x}\otimes\pf{\bar{x}}\bconst{k}=0 \mbox{ since } \bar{x}=1.
\end{eqnarray*}
\item[Case (ix):] $x,y\in U\setminus U_1$

In this case $\bar{x},\bar{y}\in k^\times\setminus \{ 1\}$ and 
\[
T(S_{x,y})=1\otimes S_{\bar{x},\bar{y}}=1\otimes 0.
\]

\end{enumerate}

\end{proof}

\begin{rem}
It is not hard to see that the specialization map $\spec{v}$ does not lift to a map 
with target $\indf{k}{F}{\rpb{k}}$. 

Indeed, if we make the calculation in case (vii) above carefully, then using 
Lemma \ref{lem:df} instead of Corollary \ref{cor:bconst} (1), we find that 
\[
T(S_{x,y})= 1\otimes \an{\bar{x}^{-1}-1}\suss{1}{\bar{x}}\in \indf{k}{F}{\ks{1}{k}}
\subset\indf{k}{F}{\rpb{k}}.
\]

Thus the homomorphism induced by $T$ will not induce a well-defined  homomorphism $\spec{v}$ 
on $\rpb{F}$ 
unless we  annihilate $\ks{1}{k}$ in the target. 
\end{rem}

\bibliography{Localfields}
\end{document}